\documentclass[letterpaper, 11pt,  reqno]{amsart}

\usepackage[margin=1.2in,marginparwidth=1.5cm, marginparsep=0.5cm]{geometry}

\usepackage{amsmath, mathrsfs, amssymb,amscd,amsthm,amsxtra, esint}
\usepackage{tikz} 
\usepackage{color}
\usepackage{xcolor}

\usepackage[implicit=true]{hyperref}

\usepackage{cases}%%%

\usepackage{upgreek}

\allowdisplaybreaks[2]

\definecolor{gr}{rgb}   {0.,   0.69,   0.23 }
\definecolor{bl}{rgb}   {0.,   0.5,   1. }
\definecolor{mg}{rgb}   {0.85,  0.,    0.85}
\definecolor{yl}{rgb}   {0.8,  0.7,   0.}
\definecolor{or}{rgb}  {0.7,0.2,0.2}

\usetikzlibrary{shapes.misc}
\usetikzlibrary{shapes.symbols}
\usetikzlibrary{shapes.geometric}

\tikzset{
	ddot/.style={circle,fill=white,draw=black,inner sep=0pt,minimum size=0.8mm},
	>=stealth,
	}

\tikzset{
	ddot2/.style={circle,fill=black,draw=black,inner sep=0pt,minimum size=0.8mm},
	>=stealth,
	}

\newtheorem{theorem}{Theorem} [section]

\newtheorem{lemma}[theorem]{Lemma}
\newtheorem{proposition}[theorem]{Proposition}
\newtheorem{remark}[theorem]{Remark}

\newtheorem{definition}[theorem]{Definition}

\DeclareMathOperator*{\intt}{\int}

\DeclareMathOperator*{\supp}{supp}
\DeclareMathOperator{\med}{med}

\DeclareMathOperator{\HS}{HS}

\DeclareMathOperator{\Id}{Id}

\DeclareMathOperator{\Ker}{Ker}

\newcommand{\1}{\hspace{0.5mm}\textup{I}\hspace{0.5mm}}
\newcommand{\I}{\mathcal{I}}

\newcommand{\noi}{\noindent}
\newcommand{\Z}{\mathbb{Z}}
\newcommand{\R}{\mathbb{R}}

\newcommand{\T}{\mathbb{T}}
\newcommand{\bul}{\bullet}

\let\Re=\undefined\DeclareMathOperator*{\Re}{Re}
\let\Im=\undefined\DeclareMathOperator*{\Im}{Im}

\newcommand{\PP}{\mathbb{P}}

\newcommand{\E}{\mathbb{E}}

\renewcommand{\H}{\mathcal{H}}

\newcommand{\CC}{\mathcal{C}}
\newcommand{\D}{\mathcal{D}}

\renewcommand{\L}{\mathcal{L}}

\newcommand{\al}{\alpha}
\newcommand{\be}{\beta}
\newcommand{\dl}{\delta}
\newcommand{\updl}{\updelta}

\newcommand{\nb}{\nabla}

\newcommand{\Dl}{\Delta}
\newcommand{\eps}{\varepsilon}

\newcommand{\g}{\gamma}
\newcommand{\G}{\Gamma}

\newcommand{\Ld}{\Lambda}
\newcommand{\s}{\sigma}
\newcommand{\Si}{\Sigma}
\newcommand{\ft}{\widehat}
\newcommand{\Ft}{{\mathcal{F}}}
\newcommand{\wt}{\widetilde}
\newcommand{\cj}{\overline}
\newcommand{\dx}{\partial_x}
\newcommand{\dt}{\partial_t}
\newcommand{\dd}{\partial}

\newcommand{\ta}{\theta}

\renewcommand{\l}{\ell}
\renewcommand{\o}{\omega}

\newcommand{\les}{\lesssim}
\newcommand{\ges}{\gtrsim}

\newcommand{\jb}[1]
{\langle #1 \rangle}

\newcommand{\ind}{\mathbf 1}

\renewcommand{\S}{\mathcal{S}}

\newcommand{\N}{\mathbb{N}}

\newcommand{\uu}{\mathbf{u}}
\newcommand{\zz}{\mathbf{z}}

\newcommand{\z}{\zeta}

\newcommand{\too}{\longrightarrow}

\usepackage[most]{tcolorbox}

\newcommand{\Hs}{\mathscr{H}}

\newcommand{\HH}{\mathcal{H}}

\newcommand{\Sym}{\textup{\texttt{Sym}}}
\newcommand{\Om}{\Omega}

\newcommand{\LOP}{\mathcal{L}}

\newcommand{\XX}{\mathbf{X}}
\newcommand{\YY}{\mathbf{Y}}

\newcommand{\hf}{\mathfrak{h}}
\newcommand{\ff}{\mathfrak{g}}

\newcommand{\Nb}{{\bf N}}
\newcommand{\ab}{{\bf a}}
\newcommand{\A}{\mathcal{A}}

\newtheorem*{ackno}{Acknowledgements}

\numberwithin{equation}{section}
\numberwithin{theorem}{section}

\makeatletter
\@namedef{subjclassname@2020}{\textup{2020} Mathematics Subject Classification}
\makeatother

\begin{document}
\baselineskip = 14pt

\title[1-$d$ stochastic heat equation]
{A remark on pathwise well-posedness of \\
the 1-$\pmb d$ stochastic heat equation}

\author[Y.~Shao, J.~Li, and T.~Oh]
{Yufei Shao, Jiawei Li,
and  Tadahiro Oh}

\address{Yufei Shao, 
School of Mathematics and Statistics, Beijing Institute of Technology, Beijing 100081, China}

\email{yufeishao@bit.edu.cn}

\address{Jiawei Li, School of Mathematics\\
The University of Edinburgh\\
and The Maxwell Institute for the Mathematical Sciences\\
James Clerk Maxwell Building\\
The King's Buildings\\
Peter Guthrie Tait Road\\
Edinburgh\\ 
EH9 3FD\\
 United Kingdom}

\email{jiawei.li@ed.ac.uk}

%%
%\address{
%Tadahiro Oh, 
%School of Mathematics and Statistics, Beijing Institute of Technology, Beijing 100081, China\\
%and
%School of Mathematics\\
%The University of Edinburgh\\
%and The Maxwell Institute for the Mathematical Sciences\\
%James Clerk Maxwell Building\\
%The King's Buildings\\
%Peter Guthrie Tait Road\\
%Edinburgh\\
%EH9 3FD\\
% United Kingdom}

%
\address{
Tadahiro Oh, 
School of Mathematics\\
The University of Edinburgh\\
and The Maxwell Institute for the Mathematical Sciences\\
James Clerk Maxwell Building\\
The King's Buildings\\
Peter Guthrie Tait Road\\
Edinburgh\\
EH9 3FD\\
 United Kingdom,
and  School of Mathematics and Statistics, Beijing Institute of Technology, Beijing 100081, China}

\email{hiro.oh@ed.ac.uk}

\subjclass[2020]{60H15, 35R60, 35K05, 
60L20, 60L50}
% 65M12}

\keywords{stochastic heat equation;
pathwise well-posedness; rough path; Young integral; random tensor estimate}

\begin{abstract}

We study pathwise well-posedness
of the stochastic heat equation (SHE) 
with a multiplicative noise on the circle.
By combining
the convolution Young and rough integration theory,  
introduced by Gubinelli and Tindel (2010), 
with  the random tensor estimate approach
to pathwise well-posedness 
of stochastic dispersive PDEs with multiplicative noises,  introduced by 
Chapouto and  the second  and third authors (2026), 
we establish pathwise well-posedness of SHE in both the Young
and rough cases, improving the results in 
Gubinelli and Tindel (2010).
In particular, in the rough case (= the white-in-time case), 
our result covers the case of almost space-time white noise, 
thus establishing an optimal result
within the framework of one-parameter rough paths.

\end{abstract}

%\date{\today}
%%
%
\maketitle

\tableofcontents

\newpage

\section{Stochastic heat equation}\label{SEC:1}

%\section{Introduction}\label{SEC:1}
%\subsection{Stochastic heat equation}

We consider the following stochastic heat equation (SHE) 
with a multiplicative noise, posed on the circle $\T = \R/(2\pi \Z)$:\footnote{By convention, we endow
$\T$ with the normalized Lebesgue measure $ dx_{\T} =  (2\pi)^{-1}dx$
such that we do not need to carry factors involving $2\pi$.}
\begin{align}
\begin{cases}
\dt u +(1- \dx^2) u = u \phi \zeta\\
u|_{t = 0} = u_0.
\end{cases}
\label{NLH1}
\end{align}

\noi
Here, 
$\phi$ is a Hilbert-Schmidt operator from $L^2(\T)$ to 
$H^\s(\T)$ 
for some $\s \in \R$  
(such that\footnote{More precisely, $\phi W^\be(t)$ has spatial regularity $\s$, 
where $W^\be$ is as in \eqref{W1}.}
 the noise $\phi \zeta$ has spatial regularity $\s$)
and $\zeta$ denotes a fractional-in-time\,/\,white-in-time and white-in-space noise.
Heuristically, one may think of $\zeta$ as 
\begin{align}
\text{``}\,\z = \jb{\dt}^{-\al}\xi\,\text{''}
\label{W0a}
\end{align}
for some $0 \le \al <  \frac 12$, where 
$\jb{\,\cdot\,} = (1+ |\cdot|^2)^\frac12$ and 
$\xi$ denotes 
a (Gaussian) space-time white noise on $\R_+ \times \T$
whose space-time covariance is (formally) given by 
\begin{align*}
 \E[ \xi(t_1, x_1)\xi(t_2, x_2) ] = \dl(t_1 - t_2) \dl (x_1 - x_2)
\end{align*} 

\noi
for $t_1, t_2 \in \R_+$  and $x_1, x_2 \in \T$
with $\dl$ denoting the Dirac delta function.
See \eqref{ze1}
for the precise meaning of $\z$.

We say that $u$ is a solution to \eqref{NLH1}
with initial data $u|_{t = 0} = u_0$
if $u$ satisfies the following
 Duhamel formulation (= mild formulation):
\begin{align}
u(t) = S(t) u_0 + \Psi(u)(t), 
\label{NLH2}
\end{align}

\noi
where $S(t) = e^{t(\dx^2 - 1)}$ denotes the heat semigroup and $\Psi(u)$ denotes the stochastic convolution, 
representing  the effect of the multiplicative noise, given by 
\begin{align}
\Psi(u)(t) = 
\int_0^t S(t-t')\big[ u(t') \phi dW^\be (t')\big].
\label{psi1}
\end{align}

\noi
Here, 
$W^\be$ denotes the cylindrical fractional Wiener process on $L^2(\T)$ given by 
\begin{align}
W^\be(t) 
 = \sum_{n\in \Z}  
B_n(t) e_n, 
\label{W1}
\end{align}

 \noi
where $e_n(x) = e^{in x}$
and $\{B_n \}_{n \in \Z}$ is a family of independent 
complex-valued 
fractional Brownian motions with Hurst parameter\footnote{Note that $\al$ in \eqref{W0a}
is given by $\al = \be -\frac 12$.} $\frac 12 \le \be < 1$, 
conditioned that 
\begin{align}
B_{-n} = \cj{B_n}, \quad n \in \Z.
\label{W2}
\end{align}

\noi
Namely, we have
\begin{align}
\z = \dt W^\be.
\label{ze1}
\end{align}

\noi
When $\be = \frac 12$, 
the sequence $\{B_n \}_{n \in \Z}$ 
reduces to a family of independent 
standard
complex-valued 
 Brownian motions, satisfying \eqref{W2}, 
 and thus $W^{ \frac 12}$ corresponds 
 to an $L^2$-cylindrical Wiener process.
See Subsection \ref{SUBSEC:FBM}
for a review on fractional Brownian motions and 
stochastic integrals with respect to them.

%Ito solution theory (see \cite{Walsh})

Our main goal in this paper
is to revisit {\it pathwise} well-posedness issues
for SHE \eqref{NLH1}, 
previously studied in \cite{GLT, GT10, HP, GH}.
In \cite{GT10}, 
Gubinelli and Tindel
generalized Lyons' rough path theory \cite{Lyons1}
and developed an algebraic integration theory
adapted to an analytic semigroup.
More precisely, they extended 
 the sewing lemma 
and  controlled paths, introduced in a seminal work \cite{Gub04} by Gubinelli, 
to  the current convolution setting.\footnote{There are various names
attached to the approach in \cite{GT10}.
See, for example, \cite[Exercise 4.16]{FH20}.
Following~\cite{GT10}, we use the term ``convolution'' 
%(as in the ``convolution sewing map'' in \cite[Theorem 3.5]{GT10})
since relevant operators
(such as $\XX^1$ in \eqref{sto1x}) have a convolution-in-time structure.
}
In particular, 
by constructing
the stochastic convolution $\Psi(u)$ in \eqref{psi1}
as a convolution Young\,/\,rough integral, 
they proved well-posedness of SHE \eqref{NLH1} 
with  $\phi = \jb{\dx}^{-\nu}$.
See Subsections \ref{SUBSEC:Y1}
and \ref{SUBSEC:R1}
below for a review on the algebraic part of their approach.

In this paper, 
we will employ the algebraic structure introduced in \cite{GT10}
but introduce an improvement on the {\it analytical side}
by incorporating the random tensor estimate, 
originally introduced in the context 
of random variables by Deng, Nahmod, and Yue \cite{DNY3}, 
to the current stochastic setting.
In a series of recent works \cite{CLO2, COZ, CGLLO2, OShao, CLOO}, 
 the last two authors with A.\,Chapouto
developed
pathwise well-posedness theory 
of stochastic dispersive PDEs with multiplicative noises, 
which had been open for  more than several decades
since the inception of Ito solution theory (and random field solution theory)
in 
\cite{Walsh}
for wave equations
and 
in~\cite{DD1, DD2}
for  Schr\"odinger equations.
A key ingredient in this development
was the random tensor estimate
for multiple stochastic integrals
(for both the fractional-in-time and white-in-time cases); see Lemma~\ref{LEM:RT}.
See Section \ref{SEC:RT}
for a further discussion on the random tensor estimate 
for multiple stochastic integrals.
The random tensor estimate 
allows us 
 to estimate operator norms
of relevant (random) drivers in a much more effective manner
as compared to~\cite{GT10}, 
where the operator norm was  crudely bounded by 
the Hilbert-Schmidt norm, 
since the latter is 
 more 
 amenable
to stochastic analysis
(in particular, the Wiener-Ito isometry).

\medskip

We now state our main results.
As a  simplification,  we only consider the case when 
$\phi$ is spatially homogeneous, namely 
$\phi$ is a Fourier multiplier operator
given by 
\begin{align}
\phi (e_n) = \phi_n e_n.
\label{phi1}
\end{align}

\noi
Under this assumption, we have 
\begin{align}
\| \phi \|_{\HS(L^2; H^\s)}= \|\jb{n}^\s\phi_n\|_{\l^2_n}.
\label{phi1a}
\end{align}

We first state a pathwise well-posedness result in the 
Young case (= the fractional-in-time case
with the Hurst parameter $\frac 12 < \be < 1$).

\begin{theorem}[Young case]\label{THM:1}
Let $\frac 12 < \be < 1$ and $\s \in \R$ satisfy
\begin{align}
\s  >  -   2 \be + 1.
\label{TH1}
\end{align}

\noi
Then, there exists small $s > 0$ such that, 
given $\phi \in \HS(L^2(\T); H^\s(\T))$, 
 the stochastic heat equation
\eqref{NLH1} is pathwise globally well-posed in $H^s(\T)$.

\end{theorem}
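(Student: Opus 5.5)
We plan to construct the stochastic convolution $\Psi(u)$ in \eqref{psi1} as a convolution Young integral in the sense of Gubinelli and Tindel, and then solve \eqref{NLH2} by a fixed point argument. The basic random object is the linear driver
\begin{align*}
\XX^1_{ts} f = \int_s^t S(t-t')\big[ f \,\phi\, dW^\be(t')\big], \qquad 0\le s\le t\le T,
\end{align*}

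\noi
which is a random operator acting on $f$. Its Fourier kernel is
\begin{align*}
\big(\XX^1_{ts}\big)_{n,m} = \phi_{n-m}\int_s^t e^{-(t-t')\jb{n}^2}\, dB_{n-m}(t').
\end{align*}

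\noi
This is a first-order multiple stochastic integral indexed by $(n,m)$.

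\textbf{Step 1: operator norm bounds on the driver.} The goal is to show that, almost surely,
\begin{align*}
\|\XX^1_{ts}\|_{H^s\to H^{s+\eps}}\les C(\o)\,|t-s|^{\g}
\end{align*}
for some $\g>1-\be$ and a small gain $\eps\ge 0$; a convolution-adapted variant in which the smoothing of $S$ is traded for time regularity would also suffice. To obtain it, we apply the random tensor estimate, Lemma~\ref{LEM:RT}, to the tensor $h_{nm}(t') = \jb{n}^{s+\eps}\jb{m}^{-s}\phi_{n-m}e^{-(t-t')\jb{n}^2}\ind_{[s,t]}(t')$. That lemma bounds the $\l^2\to\l^2$ norm by the maximum over the two "cut" norms, namely $\sup_n \|h_{n\cdot}\|$ and $\sup_m\|h_{\cdot m}\|$ in the appropriate fractional $L^2$/$H^{\be-\frac12}$-type time norm, rather than by the full Hilbert-Schmidt norm. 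For fBm with $\be>\frac12$, the covariance kernel $|t_1-t_2|^{2\be-2}$ shows that each such norm is essentially
\begin{align*}
\sup_n\sum_k |\phi_k|^2 \jb{n}^{2s+2\eps}\jb{n-k}^{-2s}\min\big(|t-s|^{2\be},\jb{n}^{-4\be}\big).
\end{align*}
Interpolating the min with weight $\theta$ gives $|t-s|^{2\be(1-\theta)}\jb{n}^{-4\be\theta}$. Bounding $\jb{n}^{2s}\jb{n-k}^{-2s}\les\jb{k}^{2|s|}$ and $\|\jb{k}^{\s}\phi_k\|_{\l^2}<\infty$, we need
\begin{align*}
4\be\theta \ge 2\eps + 2|s| + 2\max(-\s,0)
\qquad\text{and}\qquad
\be(1-\theta)>1-\be.
\end{align*}
Both hold with $s,\eps$ small and $\theta$ close to $\frac{2\be-1}{\be}$ exactly when $-\s<2\be-1$, which is \eqref{TH1}. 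Note that a worst-case Hilbert-Schmidt bound would instead require summing over $n$ and lose roughly half a derivative, which is why \cite{GT10} obtained a weaker condition. The random tensor estimate on a grid of dyadic times, together with a Kolmogorov/GRR-type argument, then upgrades these bounds to almost sure Hölder bounds uniform in $0\le s<t\le T$.

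\textbf{Step 2: algebraic part.} Using the convolution sewing lemma of \cite{GT10} (reviewed in Subsection~\ref{SUBSEC:Y1}), $\Psi(u)$ is defined for $u$ in a convolution-Hölder space $\CC^{\ka}_T H^s$ with $\ka+\g>1$. The sewing lemma gives
\begin{align*}
\|\Psi(u)\|_{\CC^\ka_T H^s}\les C(\o)\,T^{\dl}\,\|u\|_{\CC^\ka_TH^s}
\end{align*}
with $\dl>0$. The space $\CC^\ka_T H^s$ measures $u_t - S(t-s)u_s$, and the gain $\eps$ (or the parabolic smoothing of $S$) absorbs any loss in the increment $\dl\XX$-type remainder terms.

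\textbf{Step 3: contraction and globalization.} Since $\Psi$ is linear, a contraction on $[0,T_0]$ follows for $T_0$ depending only on $C(\o)$, not on $\|u_0\|_{H^s}$. Iterating on intervals $[kT_0,(k+1)T_0]$, with the driver bounds valid uniformly on compact time intervals, yields pathwise global well-posedness and continuous dependence on $u_0$.

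\textbf{Main obstacle.} I expect the main obstacle to be Step 1: verifying the hypotheses of Lemma~\ref{LEM:RT} in the fractional covariance setting with the heat-kernel time weight, and obtaining bounds uniform in $(s,t)$ with the sharp exponent count. The plan there is to compute the $H^{\be-\frac12}_t$ norm of $e^{-(t-t')\jb{n}^2}\ind_{[s,t]}$ explicitly, which is $\les\min(|t-s|^{\be},\jb{n}^{-2\be})$ up to harmless factors.
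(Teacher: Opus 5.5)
\textbf{Gap 1: Step 1 estimates the wrong operator.} Your $\XX_{ts}f=\int_s^t S(t-t')[f\,\phi\,dW^\be(t')]$, with kernel $\phi_{n-m}\int_s^t e^{-(t-t')\jb{n}^2}dB_{n-m}(t')$, is the auxiliary operator $\YY_{t,s}$ applied to a constant-in-time $f$. It is not the convolution driver. Your solution space measures $u_t-S(t-s)u_s$, so the germ has to come from freezing $u_{t'}\approx S(t'-s)u_s$. That gives $\XX^1_{t,s}(f)=\int_s^t S(t-t')[(S(t'-s)f)\,\phi\,dW^\be(t')]$, which satisfies $\wt\updl\XX^1=0$ and whose kernel carries the extra factor $e^{-(t'-s)\jb{m}^2}$.

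With your operator, the sewing defect of $\XX_{ts}(u_s)$ is $-\XX_{t_1t_2}(u_{t_2}-u_{t_3})$. This is an ordinary increment, which the convolution-H\"older norm controls only in $H^{s-2\kappa}$. A small gain $\eps$ cannot absorb that loss.

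More seriously, your operator bound is false for $\s<0$. Your time factor decays only in the output frequency $n$. After $\jb{n}^{2s}\jb{n-k}^{-2s}\les\jb{k}^{2|s|}$, you use the output decay (with your interpolation weight) to pay for $\jb{k}^{-2\s}$. This cannot work, because the supremum over $n$ is attained at small $|n|$ while $|k|\sim|m|\to\infty$. Concretely, the zeroth Fourier coefficient of your $\XX_{ts}f$ is $\sum_m\ft f(m)\phi_{-m}Z_m$, with $Z_m$ Gaussian of variance $\sim|t-s|^{2\be}$. Boundedness on $H^s$ therefore forces $\sum_m\jb{m}^{-2s}|\phi_m|^2<\infty$, i.e.\ essentially $\s\ge -s$. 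That rules out the whole range $1-2\be<\s<0$ for small $s>0$.

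The missing idea is exactly the input-side factor. For the correct kernel $\ind_{[r,t]}(t')e^{-(t-t')\jb{n}^2}e^{-(t'-r)\jb{n_2}^2}$, the beta-function computation \eqref{YS3} gives an $L^{1/\be}_{t'}$ bound $(t-r)^{(1-a)\be}N_{\max}^{-2a\be}$ on each dyadic block. Here $N_{\max}=\max(N,N_1,N_2)\ge N_1$ sees the input frequency too. So the decay pays for $N_1^{-\s}$ even in the high$\times$high$\to$low case $N\ll N_1\sim N_2$.

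\textbf{Gap 2: the time-regularity threshold is wrong.} In Step 2 the integrand has convolution-H\"older regularity $\kappa\le\g$ in $H^s$, since $\ft\updl u\approx\XX^1(u)$. The sewing condition $\kappa+\g>1$ therefore forces $\g>\frac12$, not $\g>1-\be$. With your interpolation weight near $\frac{2\be-1}{\be}$ you get $\g\approx1-\be<\frac12$, and the contraction cannot close. Your two stated conditions would also give $-\s<2(2\be-1)$, not $-\s<2\be-1$, so the agreement with \eqref{TH1} is accidental. The correct bookkeeping is $(1-a)\be=\frac12+3\eps$. This gives $2a\be=2\be-1-6\eps$ and hence the condition $\s-s>1-2\be+6\eps$.

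\textbf{Smaller point: frequency localization.} Lemma~\ref{LEM:RT} also contains the factor $\|h\|_{\l^2}^{\ta}$, which is infinite for the unlocalized tensor. You need a dyadic decomposition in the frequencies $(N,N_1,N_2)$, not only a dyadic time grid, for two reasons:
\begin{itemize}
\item it makes this factor finite, at a cost of $N_{\max}^{\ta/2}$, which is absorbed by taking $\ta$ small;
\item it lets you pull the frequency-dependent time decay out of the $\l^\infty_{nn_2}\Hs^\be_{t'}$ norm and into the dyadic sum.
\end{itemize}

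Step 3 is fine as written.
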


Following \cite[Section 4]{GT10}, we define the first order driver $\XX^1$ by setting
\begin{align}
\begin{split}
\XX^1_{t, r}(f)
&  =  \int_r^t
S(t- t') \big[ (S(t' - r) f)
\phi dW^\be(t') \big]
\end{split}
\label{sto1x}
\end{align}

\noi
for $t > r \ge 0$, 
 where  $f$ is a function on $\T$.
Then, we make sense of the stochastic convolution $\Psi(u)$
in \eqref{psi1} as
the convolution Young integral $\I^{\XX^1}(u)$
with the first order driver $\XX^1$;
see Subsection \ref{SUBSEC:Y1}.
This allows us to rewrite the Duhamel formulation \eqref{NLH2}
as the following
 convolution Young differential equation
(YDE):
\begin{align}
u(t) = S(t) u_0 + \I^{\XX^1}(u)(t)
\label{YDE1}
\end{align}

\noi
whose global well-posedness is established in a pathwise manner in Subsection \ref{SUBSEC:Y3}.

The main task is then to 
construct
the convolution Young integral $\I^{\XX^1}(u)$.
In doing so, 
we follow the incremental formulation as in \cite{Gub04, GT10}, 
where the  matter is reduced to 
establishing
 almost sure regularity properties
of the random driver~$\XX^1$.
In \cite{GT10}, 
Gubinelli and Tindel  estimated the operator
norm of $\XX^1$ by its Hilbert-Schmidt norm, which led to a loss.
In our novel approach, 
we study  regularity properties of $\XX^1$
by applying the random tensor estimate
for (multiple) stochastic integrals (Lemma \ref{LEM:RT}), 
developed in~\cite{OWZ, CLO2, COZ};
see Subsection~\ref{SUBSEC:Y2}.
A primary reason for working with the Hilbert-Schmidt norm in \cite{GT10}
(which may be viewed as the $\l^2$-norm for eigenvalues in some setting)
is its compatibility with the Wiener-Ito isometry
(related to the $L^2(\Om)$-norm).
On the other hand, 
the operator norm may be regarded as the $\l^\infty$-norm, which is not
compatible with the $L^2(\Om)$-norm
for the Wiener-Ito isometry.
The main idea behind the random tensor estimate, due to Deng, Nahmod, 
and Yue \cite{DNY3}, 
is as follows;
instead of working with the Hilbert-Schmidt
norm of a random operator ($\XX^1$ in our setting), 
we work with the 
Hilbert-Schmidt norm (= the $\l^2$-norm) of 
a very high power of $\XX^1$, say the $k$th power. 
This gives a control on 
 the $\l^{2k}$-norm of (the kernel of) $\XX^1$,
 providing a good approximation  
for  the $\l^\infty$-norm (= the operator norm) when $k \gg 1$, 
%by the $\l^p$-norm for large  $p \gg 1$, 
which is far more effective than approximating
the $\l^\infty$-norm by the $\l^2$-norm.
In the proof of the random tensor estimate, 
the 
kernel  of 
a high power of $\XX^1$ is estimated in an inductive manner.
We point out that 
the Wiener chaos estimate (Lemma~\ref{LEM:hyp})
plays a crucial role in reducing the matter
to estimating the 
Hilbert-Schmidt norm of  a high power of $\XX^1$
(more precisely, the $\l^2$-norm of the kernel
of a high power of $\XX^1$); 
see~\cite{OWZ, CLO2} for the proofs.
We note that 
a precursor of this idea (of working with a higher power of a random operator) already appears in a seminal work
\cite{BO96} by Bourgain.
See also Remark \ref{REM:RT}.
Lastly, we remark that, as pointed out in  
 \cite[Remark 6.6]{Bring2}, 
the random tensor estimate 
is closely related to operator bounds for structured random matrices
and  can be proven, using 
the 
non-commutative 
Khintchine  inequality 
\cite[Theorem~3.2]{vH}.
%\cite[Corollary~3.3]{vH}.
%
%the method in \cite{vH}, 
%in particular 
See \cite{Bring2, Kaneshiro}
for a further discussion.
%
%{\bf \Rd can be viewed 
% as an iteration of the 
%non-commutative Khinchin inequality.
%}
%

Once we establish almost sure regularity properties of the driver $\XX^1$
(Proposition \ref{PROP:drive2}), 
we can simply 
 apply
the convolution sewing lemma (Lemma~\ref{LEM:sew2})
to construct the convolution Young integral $\I^{\XX^1}(u)$
in a pathwise manner.
We note that local well-posedness of the convolution YDE~\eqref{YDE1}
then follows from a standard contraction
argument, whereas its global well-posedness
follows from an iterative application 
of the local well-posedness  argument.

\begin{remark}\rm

In \cite[Theorem 4.12]{GT10}, 
Gubinelli and Tindel studied the case $\phi = \jb{\dx}^{-\nu}$, $0 \le \nu < 1$, 
which belongs to $\HS(L^2(\T); H^\s(\T))$
if and only if  $\s < \nu - \frac 12$.
In this setting, the condition~\eqref{TH1} in Theorem \ref{THM:1} reduces to 
\begin{align}
\nu  >  -   2 \be + \frac 32.
\label{YS0a3}
\end{align}

\noi
Compare this with the condition 
 in 
\cite[Remark 4.9]{GT10}:
\begin{align}
\nu > - 4\be +3.
\label{YS0a4}
\end{align}

\noi
Recalling that 
$\nu \ge 0$, 
our condition \eqref{YS0a3} provides an improvement
over the condition~\eqref{YS0a4}
from \cite{GT10}
(restricted to $\frac 12 < \be < \frac 34$; 
for $\frac 34 \le  \be < 1$, 
both \eqref{YS0a3}
and \eqref{YS0a4} provide the same conclusion  under 
the restriction $\nu \ge 0$).

\medskip

\end{remark}

Next, we  state a pathwise well-posedness result 
in the rough case (= 
the white-in-time case with the Hurst parameter $\be = \frac 12$)
in a somewhat formal manner.

\begin{theorem}[rough case]\label{THM:2}
Let $\be = \frac 12$ and $\s > - \frac 12$.
Then, there exists small $s > 0$ such that, given $\phi \in \HS(L^2(\T); H^\s(\T))$, 
 the stochastic heat equation
\eqref{NLH1} is pathwise  globally well-posed in $H^s(\T)$.

\end{theorem}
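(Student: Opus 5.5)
Since $\beta = \frac12$ gives time regularity just below $\frac12$, a single first-order driver cannot make the convolution integral converge. So I will prove Theorem \ref{THM:2} by realizing \eqref{NLH2} as a convolution rough differential equation in the sense of \cite{GT10}, with the analytic input coming from the random tensor estimate (Lemma \ref{LEM:RT}).

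\textbf{The drivers.} Alongside $\XX^1$ in \eqref{sto1x}, now built from a cylindrical Brownian motion, I define the second order driver as an iterated Itô (or Stratonovich, with renormalization) integral:
\begin{align*}
\XX^2_{t,r}(f) = \int_r^t S(t-t')\Big[\Big(\int_r^{t'} S(t'-t'')\big[(S(t''-r)f)\,\phi\, dW(t'')\big]\Big)\phi\, dW(t')\Big].
\end{align*}
By construction, $\XX^2$ satisfies the convolution Chen relation
\[
\dl\XX^2_{t,\tau,r} = \XX^1_{t,\tau}\XX^1_{\tau,r},
\]
with the convolution $\dl$ operator of \cite{GT10}.

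\textbf{Regularity of the drivers.} The main step is to show that almost surely
\[
\|\XX^1_{t,r}\|_{\L(H^{s};H^{s'})} \les |t-r|^{\g}, \qquad \|\XX^2_{t,r}\|_{\L(H^{s};H^{s''})} \les |t-r|^{2\g},
\]
for some $\g \in (\frac13,\frac12)$ and suitable small $s$, with admissible loss or gain of spatial derivatives traded against powers of $|t-r|$ through the smoothing of $S$.

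In frequency, $\XX^1_{t,r}$ has kernel
\[
(n,m) \mapsto \int_r^t e^{-(t-t')\jb{n}^2} e^{-(t'-r)\jb{m}^2}\,\phi_{n-m}\, dB_{n-m}(t').
\]
This is a first chaos random matrix. Lemma \ref{LEM:RT} bounds its operator norm by essentially the $\l^\infty$-type norms of the deterministic kernel, rather than by the Hilbert--Schmidt norm used in \cite{GT10}. Computing the variance, the condition $\s > -\frac12$ should be exactly what makes the relevant sums over $n-m$ converge, up to $\eps$-losses absorbed by the heat smoothing. $\XX^2$ lies in the second chaos (plus a zeroth-chaos trace part), and I treat it by the multilinear version of Lemma \ref{LEM:RT}. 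The upgrade from moment bounds at fixed $(t,r)$ to almost sure Hölder bounds uses Lemma \ref{LEM:hyp} and a Kolmogorov/GRR argument.

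\textbf{Main obstacle.} I expect the main obstacle to be the second order driver. First, the diagonal (contraction) term in the product formula, of the form $\sum_k|\phi_k|^2 \cdot(\text{heat factors})$, must be controlled in the Itô case. I expect it to be harmless because $\s>-\frac12$ gives only a logarithmic divergence, which is compensated by time decay. Second, the off-diagonal random tensor must achieve the full $|t-r|^{2\g}$ bound with almost no spatial loss; this is where I would spend the effort, splitting into the regimes where the intermediate frequency dominates or not.

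\textbf{Solving the equation.} With the drivers in hand, I define controlled paths $u$ with
\[
\dl u_{t,r} = \XX^1_{t,r} u_r + u^\#_{t,r},
\]
and construct $\I^{\XX}(u)$ via the convolution sewing lemma (Lemma \ref{LEM:sew2}) applied to $\XX^1 u_r + \XX^2 u_r$, with remainder of order $3\g>1$. A contraction in the controlled-path space on a short interval, whose length depends only on the driver norms, gives local well-posedness. Since the equation is linear, there is no smallness of data to worry about, and iterating on consecutive intervals of equal length gives global well-posedness in $H^s$. Pathwise continuity in $(u_0,\XX^1,\XX^2)$ follows from the same contraction estimates.
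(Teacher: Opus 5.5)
The overall architecture matches the paper: convolution sewing, random tensor bounds for the drivers, and a contraction whose time step does not depend on the data. The gap is the level. Your scheme is a second-order one. It needs $\XX^1\in C^{\gamma}_{2,T}$ and $\XX^2\in C^{2\gamma}_{2,T}$ (operator-valued) with $\gamma>\frac13$. For $\sigma$ near $-\frac12$ this fails, whatever target space you allow.

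Take $\phi=\langle\partial_x\rangle^{-\nu}$, which lies in $\mathrm{HS}(L^2;H^\sigma)$ for every $\nu>\sigma+\frac12$. By \eqref{stoconv1}, the zero Fourier mode of the output is $\mathcal{F}_x(\XX^1_{t,r}f)(0)=\sum_{m}\widehat f(m)\,G_m$, where $G_m=\langle m\rangle^{-\nu}\int_r^t e^{-(t-t')}e^{-(t'-r)\langle m\rangle^2}\,dB_{-m}(t')$. Hence, for every $s'\in\mathbb{R}$ and $0<h=t-r\le 1$,
\[
\mathbb{E}\|\XX^1_{t,r}\|_{\mathcal{L}(H^s;H^{s'})}^2\ \ge\ \sum_{m\in\mathbb{Z}}\langle m\rangle^{-2s}\,\mathbb{E}|G_m|^2\ \gtrsim\ \sum_{\langle m\rangle\sim h^{-1/2}}\langle m\rangle^{-2s-2\nu}\,h\ \sim\ h^{\frac12+s+\nu}.
\]
So $\XX^1$ is not $\gamma$-H\"older with values in $\mathcal{L}(H^s;H^{s'})$ for any $\gamma>\frac14+\frac{s+\nu}{2}$. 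This holds in $L^2(\Omega)$, and by Paley--Zygmund not almost surely either. If $\sigma<-\frac13-s$, pick $\nu\in(\sigma+\frac12,\frac16-s)$; then $\gamma<\frac13$, and $3\gamma>1$ is out of reach.

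This obstruction is the high$\times$high$\to$low interaction: input frequency $m$, noise frequency $-m$, output frequency $0$. It is exactly the condition $s+\sigma>-2a\beta$ in \eqref{YS0a}, and it does not see the output regularity, so an ``admissible loss'' of derivatives cannot help.

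Enlarging $s$ does not rescue the argument either. Testing on $f\equiv 1$ gives $\mathbb{E}\|\XX^1_{t,r}(1)\|_{H^{s'}}^2\gtrsim h^{\frac12+\nu-s'}$. Near $\sigma=-\frac12$ this forces a loss $s-s'$ of roughly $s+\frac16$ derivatives. Your contraction has no mechanism to absorb that loss, because the germ $\XX^1_{t,0}(u_0)$ enters $\mathcal{I}(u)_t$ directly rather than through the smoothing sewing map.

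Your heuristic that $\sigma>-\frac12$ ``makes the sums converge'' is correct only when about $N_{\max}^{-1/2}$ of heat smoothing is spent on $\XX^1$. That leaves time regularity barely above $\frac14$; with $\gamma>\frac13$ you can afford only $N_{\max}^{-1/3}$.

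The missing idea is to go one level higher, which is what the paper does. It takes $\gamma=\frac14+2\varepsilon$ and $\alpha=\frac14+\varepsilon$. It adds the third-order driver $\XX^3$ from \eqref{high1} and works with second-order controlled paths $(u,u^1,u^2)$ as in Definition~\ref{DEF:high}. It then sews $\XX^1(u)+\XX^2(u^1)+\XX^3(u^2)$, whose convolution coboundary is, up to sign, $\XX^1(R^0)+\XX^2(R^1)+\XX^3(\widehat{\updelta}u^2)\in C^{\alpha+3\gamma}_{3,T}H^s$, with $\alpha+3\gamma>1$. At the fixed point, $u=u^1=u^2$.

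The analytic work then lies in Propositions~\ref{PROP:drive3} and~\ref{PROP:drive4}. There, Lemma~\ref{LEM:RT} is combined with heat smoothing of almost $N_{\max}^{-1}$ placed on the dominant frequency among $N,N_{23},N_3$. For $\XX^3$ the smoothing is split between the pairs $\{N,N_{234}\}$ and $\{N_{34},N_4\}$. This gives $\XX^j\in C^{j\gamma}_{2,T}\mathcal{L}(H^s)$, $j=1,2,3$, exactly under $\sigma>-\frac12$.

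A side remark on your expected trace term. In the It\^o (multiple Wiener integral) interpretation, $\XX^2$ is an integral over the ordered simplex and lies purely in the second chaos, so there is no trace term to control. A Stratonovich version would carry the correction $\frac12\sum_k|\phi_k|^2$, which for $\sigma<0$ can diverge polynomially, not logarithmically.
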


Following \cite[Section 5]{GT10}, 
we introduce the second order driver $\XX^2$ and 
the third order driver $\XX^3$ 
via the recursive relation \eqref{high1a} (see also \eqref{high1});
see \eqref{high2} and \eqref{high3}
for direct definitions of $\XX^2$ and $\XX^3$, respectively.
Then, we make sense of the stochastic convolution $\Psi(u)$
in~\eqref{psi1} as
the convolution rough integral $\I^{\cj \XX}(u)$
with the  driver $\cj \XX = (\XX^1, \XX^2, \XX^3)$
by imposing a second order controlled structure on $u$;
see Subsection \ref{SUBSEC:R1}.
This allows us to rewrite the Duhamel formulation \eqref{NLH2}
as the following
 convolution rough differential equation
(RDE):
\begin{align*}
u(t) = S(t) u_0 + \I^{\cj \XX}(u)(t).
%\label{YDE2}
\end{align*}

\noi
Once the convolution rough integral $\I^{\cj \XX}(u)$
is constructed in a suitable manner, 
a standard contraction with the second order controlled structure on the unknown $u$
yields
local and global well-posedness  in a pathwise manner;
see Subsection \ref{SUBSEC:R3}.

As in the Young case, 
the main task in constructing the 
convolution rough integral $\I^{\cj \XX}(u)$ 
with the driver 
$\cj \XX = (\XX^1, \XX^2, \XX^3)$
is to establish almost sure regularity properties
of the higher order drivers $\XX^2$ and $\XX^3$.
We achieve this by applying the random tensor
estimate 
and arguing inductively 
(see \eqref{ZYS3} and \eqref{ZZYS3})
by exploiting 
the 
smoothing property of the heat semigroup $S(t)$
(on the Fourier side).
See Subsection \ref{SUBSEC:R2}.

\begin{remark}\rm
Recall that the case of a space-time white noise corresponds
to $\phi = \Id$ which belongs to 
$ \HS(L^2(\T); H^\s(\T))$ for $\s < - \frac 12$.
Theorem \ref{THM:2} provides the condition 
$\s > - \frac 12$, thus allowing us
to treat 
an almost space-time white noise, 
for example,  $\phi = \jb{\dx}^{-\nu}$ for any $\nu > 0$.
This yields a significant improvement
over \cite{GT10}, 
where the condition $\nu > \frac 13$
was obtained 
via the same third order expansion as explained above.

We also note that Theorem \ref{THM:2} is optimal in the following sense.
When $\phi = \Id$ (with $\be = \frac 12$), the noise $\z$ in \eqref{NLH1}
corresponds to the space-time white noise
$\xi$ whose  temporal and spatial regularities are $-\frac 12- \eps$, $\eps > 0$, almost surely.
In this case, the stochastic convolution $\Psi$ (with $u \equiv1$), given by 
\begin{align*}
\Psi(t) = 
\int_0^t S(t-t') \phi dW^\be (t'), 
\end{align*}

\noi
has 
  temporal and spatial regularities $\frac 12- \eps$, $\eps > 0$, almost surely.
In studying \eqref{NLH1}, 
we  need to understand the formal product $\Psi \xi$
as the first approximation.
However, this formal product 
suffers the deficiency of regularity in {\it both}
temporal and spatial directions.
Namely,  the sum of  regularities is negative in 
both temporal and spatial directions.
As such, the standard one-parameter  rough path
theory is not sufficient to treat this problem
and one needs to carry out  bi-parameter analysis
as in \cite{CG}.
We will address this problem in a forthcoming work.

Lastly, we point out that the theory of regularity structures, 
introduced by Hairer \cite{Hairer}, allows us to prove well-posedness
of \eqref{NLH1} in the case of a space-time white noise, 
since it treats temporal and spatial regularities in a unified
manner (called homogeneity), thus avoiding the necessity of bi-parameter analysis.
See  \cite{HP}.

\end{remark}

\begin{remark}\label{REM:GT2}\rm
(i) For simplicity of presentation, 
we restricted our attention to the one-dimensional case, 
but our analysis can be easily adapted to higher dimensions, 
improving the corresponding results in \cite{GT10}.
By the same reason, 
 we only considered the noise of the form $u \phi \z$ in this paper, 
but our approach can be easily adapted to treat 
 the noise of the form $f(u) \phi \z$ in the Young case
under a suitable assumption on $f(u)$.
In \cite[Section 6]{GT10}, 
 Gubinelli and Tindel studied
the noise of the form $u^2 \phi \xi$
in the rough case, 
using nonlinear rough path analysis
as in \cite{Gub12}; see also \cite{NX, OShao}.
In a forthcoming work, we will revisit this problem, 
using   the random tensor approach.

\smallskip

\noi
(ii)
In this paper, we study the first, second, and third order drivers
$\XX^1$, $\XX^2$, and $\XX^3$ 
for $\frac 12 \le \be < 1$
via the random tensor estimates;
see Propositions \ref{PROP:drive2}, \ref{PROP:drive3}, and \ref{PROP:drive4}.
As mentioned above, our argument is inductive
and thus can be easily adapted to treat a higher order driver~$\XX^j$
of an arbitrary degree, 
defined by the recursive relation \eqref{high1}.
It may be of interest to improve the regularity condition on $\phi$
in the Young case by considering a sufficiently higher order expansion.
See also Remark \ref{REM:GT1}.
We, however, do not pursue this issue further in this paper.

\end{remark}

\section{Notations and function spaces} %preliminary tools}
\label{SEC:2}

\subsection{Basic notations}

We use  $A\les B$ to denote an estimate of the form $A\leq CB$ for some constant $C>0$. We write $A\sim B$ if $A\les B$ and $B\les A$, while $A\ll B$ denotes $A\leq c B$ for some small constant $c> 0$. 
We may write  $\les_{\al}$ and $\sim_{\al}$ to 
emphasize the dependence on an external parameter $\al$.
We use $C>0$ to denote various constants, which may vary line by line, 
and we write $C_{\al}$
to emphasize 
the dependence on an external parameter $\al$.

Given $a, b \in \R$, we set
$a\vee b = \max(a, b)$ and 
$a\wedge b = \min(a, b)$.

%Let $V$ be a normed vector space.
%Given $K > 0$, we use $B_K$ to denote
%the closed ball in $V$ of radius $K > 0$ centered at the origin.

In expressing the dependence of a function $u$
on the time variable, we often use the short-hand notation
$u_t = u(t)$,  which is standard in probability theory and stochastic analysis.
Similarly, we often use the short-hand notation
$S_t$ for the heat semigroup 
$S(t) = e^{t(\Dl - 1)}$.
In considering an integral (in time) operator $\I$, 
a priori defined on functions on $\T$, 
we use the notation~$\I(u_\bul)$
to denote its (formal) action
on a space-time function $u$, 
where  $\bul$ denotes the variable of integration.
For example, by comparing \eqref{psi1} and \eqref{Y1}, we  have
\[ \Psi(u)(t) = \YY_{t, 0}(u_{\bul}),\]

\noi
where the right-hand side 
is merely a formal expression.

Given $n_{j_1}, \dots, n_{j_k} \in \Z$, 
we set 
\begin{align}
n_{j_1 \cdots j_k} = 
n_{j_1}+ \cdots+ n_{j_k}.
\label{short1}
\end{align}

\noi
For example, 
$n_{123} = n_1 + n_2 + n_3$.

We set $\Z_{\ge 0} = \N \cup\{0\}$
and use $2^{\Z_{\ge 0}}$ 
to denote the set of dyadic numbers $N\ge 1$.

\subsection{Function spaces}
\label{SUBSEC:2.2}

Given $s \in \R$, 
let $H^s(\T)$ be the $L^2$-based Sobolev space
defined by the norm:
\[ \| f\|_{H^s} = \bigg(\sum_{n \in \Z} \jb{n}^{2s} |\ft f(n)|^2\bigg)^\frac 12, \]

\noi
where $\ft f$ denotes the Fourier transform of $f$.

In studying space-time functions, 
we often use short-hand notations such as
$L^\infty_T H^s_x  = L^\infty([0, T]; H^s(\T))$, etc.~when there is no ambiguity.

Given Banach spaces $V$ and $W$, we use $\L(V; W)$
to denote the Banach space of bounded linear operators from $V$ to $W$.
When $V = W$, we simply set $\L(V) = \L(V;V)$.

Let $V$ be a Banach space and $T>0$.
For $n\in\N$, we set 
\begin{align*}
\Delta_{n, T} = 
\big\{ (t_1, \ldots, t_n) \in [0,T]^n: \ t_i > t_j 
\text{ for } i < j\big\}.
\end{align*}
We denote by $C_{n,T}V$ 
the space of continuous functions 
from $\Delta_{n,T}$ to $V$. When $n=1$,
we may write $C_T V$ for simplicity
and equip this space with the supremum norm: 
\begin{align*}
\|f\|_{C_T V} = \|f\|_{L^\infty_T V} = \sup_{0\leq t \leq T} \|f(t)\|_V.
\end{align*}

\noi
We define the coboundary operator 
$\updl: C_{n,T} V  \to C_{n+1,T} V$ 
as follows; 
given  $f\in C_{n,T} V$ 
and $(t_1,\ldots, t_{n+1}) \in \Dl_{{n+1},T}$, 
we set
\begin{align*}
(\updl f)_{t_1,\ldots , t_{n+1}} = 
\sum_{k=1}^{n+1} (-1)^{k} f_{t_1, \ldots, t_{k-1}, t_{k+1}, \ldots, t_{n+1}}.
\end{align*}

\noi
For example, for $f\in C_T V$
and 
$g\in C_{2,T} V$, 
we have 
\begin{align}
\begin{split}
(\updl f )_{t,r} &= f_t - f_r, \\
(\updl g)_{t_1,t_2,t_3} &= 
g_{t_1,t_3} - g_{t_1,t_2} - g_{t_2,t_3}
\end{split}
\label{dl1}
\end{align}

\noi
for $(t,r)\in\Dl_{2,T}$
and $(t_1,t_2,t_3) \in \Dl_{3,T}$.
As noted in \cite{GT10}, 
the sequence 
\begin{align*}
0 \too V \too C_{1, T}V \stackrel{\updl}{\too} C_{2, T}V
\stackrel{\updl}{\too} C_{3, T}V
\stackrel{\updl}{\too} \cdots
\end{align*}

\noi
is exact. 
In particular, we have 
 $\updl\circ\updl =0$ and 
if $f \in C_{n,T} V$ with $\updl f =0$, 
then there exists a $g\in C_{n-1,T}V$ 
such that $f= \updl g$; 
see, for example,  \cite[Lemma 2.1]{GT10}.

In the following, we go over a similar complex structure in the convolution setting;
see \cite[Section 3]{GT10}.
Let $\{S_t\}_{t\ge 0}$ be a semigroup of bounded operators on a separable Banach space $V$
 with $S_0 = \Id$, 
satisfying
\begin{align}
\|S_t f\|_{V} \le \|f\|_V.
\label{S1}
\end{align}

\noi
We then set 
\begin{align}
A_{t} = S_{t} - \Id.
\label{dl2}
\end{align}

\noi
We now define the convolution coboundary operator $\ft \updl$
by 
\begin{align*}
(\ft \updl f)_{t_1,\ldots , t_{n+1}} 
= ( \updl f)_{t_1,\ldots , t_{n+1}} 
- A_{t_1 - t_2}
 f_{t_2,\ldots , t_{n+1}}
\end{align*}

\noi
for $f \in C_{n, T}V$ and 
 $(t_1,\ldots, t_{n+1}) \in \Dl_{{n+1},T}$.
For example, for $f\in C_T V$
and 
$g\in C_{2,T} V$, 
we have 
\begin{align}
\begin{split}
(\ft \updl f)_{t, r}
&  = (\updl f)_{t, r} - A_{t-r} f_r
= f_t - S_{t-r} f_r, \\
(\ft \updl g)_{t_1, t_2, t_3} 
& = 
g_{t_1, t_3}
- g_{t_1, t_2}
- S_{t_1-t_2}g_{t_2, t_3}
\end{split}
\label{dl3}
\end{align}

\noi
\noi
for $(t,r)\in\Dl_{2,T}$
and $(t_1,t_2,t_3) \in \Dl_{3,T}$, 
where $\updl$ is as in \eqref{dl1}.
Compare \eqref{dl3} with~\eqref{dl1}.
Then, as in the non-convolution setting, 
the sequence 
\begin{align*}
0 \too S_{t}V \too C_{1, T}V \stackrel{\ft \updl}{\too} C_{2, T}V
\stackrel{\ft \updl}{\too} C_{3, T}V
\stackrel{\ft \updl}{\too} \cdots
\end{align*}

\noi
is exact, 
where $S_t V$ denotes the space of ``linear solutions'':
\[ S_t V = \{ S_t v: v \in V\}.\]

\noi
In particular, we have 
 $\ft \updl\circ\ft \updl =0$ and 
if $f \in C_{n,T} V$ with $\ft \updl f =0$, 
then there exists a $g\in C_{n-1,T}V$ 
such that $f= \ft \updl g$; 
see \cite[Proposition 3.1]{GT10}.

We also define an auxiliary operator $\wt \updl$,
acting on operator-valued increments $X \in C_{2, T} \L(V)$,  by setting
\begin{align}
\begin{split}
(\wt\updl  X)_{t_1, t_2, t_3}
& = (\ft \updl X)_{t_1, t_2, t_3} - X_{t_1, t_2}A_{t_2 - t_3}\\
& =  (\updl X)_{t_1, t_2, t_3} - A_{t_1 - t_2} X_{t_2, t_3}  
- X_{t_1, t_2}A_{t_2 - t_3}
\end{split}
\label{dl4}
\end{align}

\noi
for  $(t_1,t_2,t_3) \in \Dl_{3,T}$, 
where $A$ is as in \eqref{dl2}.
The operator
 $\wt \updl$ 
allows us to express
Chen's relation in the current convolution setting
in a concise manner;
see \eqref{chen3}, \eqref{chen1}, and \eqref{chen2}.
See also~\eqref{prod2} below.

Given $X \in C_{n, T}\L(V)$ and $f \in C_{m, T}V$, 
we define their  contraction
$X(f)  \in C_{n+m - 1, T}V$ by 
\begin{align}
\big(X(f)\big)_{t_1, \dots, t_{n+m-1}} = 
X_{t_1, \dots, t_n}
( f_{t_n, \dots, t_{n+m-1}})
\label{prod1}
\end{align}

\noi
for 
$(t_1, \dots, t_n)\in \Dl_{n, T}$
and
$(t_n, \dots, t_{n+m-1}) \in \Dl_{m, T}$.
We also recall the following product rule for $\ft \updl$
defined in \eqref{dl3};
given $X \in C_{2, T}\L(V)$ and $f \in  C_{1, T}V$, 
we have 
\begin{align}
\ft \updl (X(f)) = (\ft \updl X)(f) - X(\updl f)
= (\wt \updl X)(f) - X(\ft \updl f), 
\label{prod2}
\end{align}

\noi
where $\wt \updl$ is as in \eqref{dl4} and the right-hand side is understood in the sense of \eqref{prod1};
see  \cite[math display after~(89)]{GT10},
which follows as a variant of \cite[Lemma 3.2]{GT10}.

%Let $0 < \g < 1$.
%
% we denote by $C^\g_T V = C^\g([0, T]; V)$ the space of $\g$-H\"older continuous functions taking values in $V$, endowed  with the seminorm:
%\begin{align*}
%\|f\|_{C^\g_T V} = \sup_{(t,r)\in \Dl_{2,T}} 
%\frac{\|(\updl f)_{t,r}\|_V}{|t-r|^\g}.
%\end{align*}
%
%\noi
%We also define 
% $\CC^\g_T V = \CC^\g([0, T]; V)$ via the norm:
%\begin{align*}
%\| f \|_{\CC^\g_TV} = \| f \|_{L^\infty_TV} + \|f\|_{C^\g_T V}.
%%\label{Ho2a}
%\end{align*}
%
%
%\noi 
For $n = 2, 3$, we  introduce the spaces 
$C^\g_{n,T}V$, 
equipped with the following H\"older-type norms;
 for $g\in C_{2,T}V$ and $h\in C_{3,T}V$, we set
\begin{align}
\begin{split}
\| g\|_{C^\g_{2,T} V} & 
= \sup_{(t,r) \in \Dl_{2,T}} \frac{\|g_{t,r} \|_{V} }{|t-r|^{\g}}, \\
\| h\|_{C^\g_{3,T} V} & 
= \inf_{0<\al<\g} 
\sup_{(t_1,t_2,t_3) \in \Dl_{3,T}} 
\frac{ \|h_{t_1,t_2,t_3}\|_V}
{|t_1-t_2|^{\al} |t_2-t_3|^{\g-\al}}.
\end{split}
\label{Ho2}
\end{align}

\noi
Lastly, 
we  define the class $\ft C_{T}^\g V$
to be the collection of functions $f$ satisfying
\begin{align}
\|f \|_{\ft C_{T}^\g V} 
= \|\ft \updl f\|_{C^\g_{2, T} V} 
= \sup_{(t,r)\in \Dl_{2,T}} 
\frac{\|(\ft \updl f)_{t,r}\|_V}{|t-r|^\g} < \infty
\label{Ho3}
\end{align}

\noi
and define 
 $\ft \CC^\g_T V = \ft \CC^\g([0, T]; V)
  = \ft C_{T}^\g V\cap L^\infty_T V$
 via the norm:
\begin{align}
\| f \|_{\ft \CC^\g_TV} = \| f \|_{L^\infty_TV} + \|f\|_{\ft C^\g_T V}.
\label{Ho4}
\end{align}

\noi
From \eqref{dl3}, we have $f_t =  (\ft \updl f)_{t, 0}+ S_t f_0$.
Thus, from \eqref{Ho3} and  \eqref{S1}, we have 
\begin{align}
\|f\|_{L^\infty_T V} \le T^\g \|f\|_{\ft C^\g_T V} + \|f_0\|_{V}.
\label{Ho5}
\end{align}

\section{Random tensor estimate}
\label{SEC:RT}

In this section, 
 we first go over the basic definitions and properties of multiple stochastic integrals
 with respect to fractional Brownian motions.
In Subsection \ref{SUBSEC:RT},  
we state the random tensor estimate (Lemma \ref{LEM:RT})
which plays a crucial role
in establishing almost sure mapping properties
of the drivers $\XX^j$, $j = 1, 2, 3$.
See Subsections \ref{SUBSEC:Y2} and 
\ref{SUBSEC:R2}.

\subsection{Fractional Brownian motion and multiple stochastic integrals}
\label{SUBSEC:FBM}

In this subsection, we 
briefly go over 
 the basic definitions and properties  of fractional Brownian motions and 
Wiener integrals with  respect to fractional Brownian motions.
See \cite[Chapter 5]{Nualart06} for a further discussion.

\begin{definition}\rm
\label{DEF:fBM}
Let $0 < \be < 1$.
 A (real-valued) fractional Brownian motion $\{B(t)\}_{t\in \R_+}$ with Hurst parameter $\be$ is a centered Gaussian process with covariance given by
\begin{align*}
 \E[B(t_1) B(t_2)] =   \frac12 \Big( t_1^{2\be} + t_2^{2\be} - |t_1-t_2|^{2\be}\Big) .
\end{align*}

\noi
When $\be=\frac12$, this process reduces  to the standard Brownian motion. 
A complex-valued fractional Brownian motion $\{ B(t)\}_{t\in \R_+}$ with Hurst parameter $\be$ is a 
complex-valued centered Gaussian process  such that 
%its real and imaginary parts 
$\{ \sqrt 2 \Re B(t)\}_{t\in \R_+}$ and  $\{\sqrt 2\Im B(t)\}_{t\in \R_+}$
 are independent real-valued fractional Brownian motions with Hurst parameter $\be$
 such that 
 \[ \E\big[ |B(t_1) - B(t_2)|^2\big] = |t_1 - t_2|^{2\be}.\]

\noi
See also 
\cite[Section 5]{OST}
for a discussion on fractional Brownian motions.

\end{definition}

Let us first  introduce the following partition of  $\Z^d$:
\begin{align*}
\Z^d = (\Z^d)_+  \cup (\Z^d)_- \cup \{0\}^d ,
\end{align*}

\noi
where 
\begin{equation*}
(\Z^d)_+=\bigcup_{k=0}^{d-1} \Z^k\times \Z_{+}\times \{0\}^{d-k-1}
\qquad \text{and}\qquad 
(\Z^d)_- = -(\Z^d)_+.
\end{equation*}

\noi
%We also  set $(\Z^d)_{+,0} := (\Z^d)_{+} \cup \{0\}^d$.
Given $0 < \be < 1$, 
let $\{B_n\}_{n\in \Z^d}$ be a family of 
mutually independent complex-valued fractional Brownian motions with Hurst parameter $\be$, conditioned 
that 
\begin{align}
B_{-n} = \cj{B_n}, \qquad n \in \Z^d, 
\label{fBM1}
\end{align}

\noi
which in particular implies that $B_0$ is real-valued.
Then,  $\{B_0\}\cup \{\sqrt 2\Re B_n, \sqrt 2 \Im B_n \}_{n\in (\Z^d)_{+}}$ 
forms a family of  mutually independent real-valued fractional Brownian motions
with the same Hurst parameter $\be$. 

%
%\noi
%Then,  $\{\sqrt 2\Re B_n, \sqrt 2 \Im B_n \}_{n\in (\Z^d)_{+, 0}(\Z^d)_{+, 0}}$ 
%forms a family of  mutually independent real-valued fractional Brownian motions
%with the same Hurst parameter $\be$. 

In order to define stochastic integrals with respect to these fractional Brownian motions, 
we need the following {\it real} Hilbert space.
For $\frac 12 < \be< 1$,  let 
$\Hs^\be(\R_+)$ be the completion of 
linear combinations of 
(real-valued) step functions on $\R_+$ under 
the following norm:\footnote{On the class of functions $f$ on $\R$ with 
$\supp (f) \subset \R_+$, \eqref{BM0} defines a norm, not a semi-norm.}
\begin{align}
\label{BM0}
\begin{split}
\|f\|_{\Hs^\be(\R_+)}^2
&   = \be(2\be-1)
\int_0 ^\infty \int_0^\infty f(t) f(t') |t-t'|^{2\be-2} dtdt' \\
& = C_\be \int_0^\infty f(t) \mathfrak{I}_{2\be - 1}(f) (t) dt \\
&  = C_\be  \int_\R  |\tau|^{1-2\be} |\ft{f}(\tau)|^2 d\tau 
= C_\be \|f\|_{\dot H^{\frac 12 - \be}(\R)}^2
\end{split}
\end{align}

\noi
for a function $f$ supported on $\R_+$, 
where
$\mathfrak{I}_{2\be-1} = |\dt|^{1-2\be}$ denotes the Riesz potential of order $2\be - 1>0$.
Similarly, given $k \in \N$, 
we 
define $\Hs^\be(\R_+^k)$ 
(with the understanding 
that $\R_+^k := (\R_+)^k$)
to be
the completion of 
linear combinations 
of products of step functions in  $t_j \in \R_+$, $j = 1, \dots, k$,  under 
\begin{align}
\begin{split}
\|f\|_{\Hs^\be(\R_+^k)}^2
&   = \be^k(2\be-1)^k
\int_{\R_+^{2k}}
f(t_1, \dots, t_k) 
f(t_1', \dots, t_k')
\prod_{j = 1}^k |t_j-t_j'|^{2\be-2} dt_jdt_j' \\
&
= C_\be^k
\bigg\| \prod_{j = 1}^k |\dd_{t_j}|^{\frac 12 - \be}f\bigg\|_{L^2(\R^k_+)}^2
\end{split}
\label{BM0a}
\end{align}

\noi
for a function $f(t_1, \dots, t_k)$ supported on $\R_+^k$. 
When  $\be = \frac12$, we set  
\begin{align}
\Hs^\frac12(\R^k_+) = L^2(\R_+^k).
\label{BM1}
\end{align}

%
%
%Let 
%$\{B_n\}_{n\in \Z^d}$ 
%be the family of independent  complex-valued fractional Brownian motion
%conditioned that 
%
%
%\noi

We say that 
a sequence $f = \{ f_n \}_{n \in \Z^d}$ of complex-valued functions
$f_n$ on $\R_+$
 belongs to $ \l^2(\Z^d;\Hs^\be(\R_+))$, 
if we have
 \begin{align}
 f_{-n} = \cj{f_n}, \quad n \in \Z^d, 
\label{fBM2} 
 \end{align}

\noi
and 
$\Re f_n, \Im f_n \in \Hs^\be(\R_+)$
(note from \eqref{fBM2}  that $\Im f_0 = 0$)
such that 
\begin{align*}
\|f\|_{\l^2(\Z^d;\Hs^\be(\R_+))}
:\! & = \bigg(\sum_{n \in \Z^d} \| f_n\|_{\Hs^\be(\R_+)}^2\bigg)^\frac 12\\
& = \bigg(\sum_{n \in \Z^d} \|\Re  f_n\|_{\Hs^\be(\R_+)}^2
+ \sum_{n \in \Z^d}  \|\Im  f_n\|_{\Hs^\be(\R_+)}^2\bigg)^\frac 12 < \infty.
\end{align*}

%\noi
%Given $f\in \l^2(\Z^d;\Hs^\be(\R_+))$
% satisfying 
% \begin{align}
% f_{-n} = \cj{f_n}, \quad n \in \Z^d, 
%\label{fBM2} 
% \end{align}
 
\noi
We then define the Wiener integral  of $f$
 with respect to $\{B_n\}_{n\in\Z^d}$
 by setting
\begin{align}
\begin{split}
I_1[f]  
& =  \sum_{n\in \Z^d}J_n^{(r)}(f_n)
+ \sum_{n\in \Z^d}J_n^{(i)}(f_n)\\
& = \sum_{n\in \Z^d} \int _0^\infty f_n(t) d \Re B_n(t)
+ i 
 \sum_{n\in \Z^d}
\int _0^\infty f_n(t) d \Im B_n(t).
\end{split}
\label{I1}
\end{align}

\noi
Note that $I_1[f]$ is real-valued
in view of the conditions \eqref{fBM1} and \eqref{fBM2}.
In \eqref{I1},  each summand $J_n^{(r)}(f_n)$ 
or $J_n^{(i)}(f_n)$ is  understood as a Wiener integral;
namely, $\{J_0^{(r)}(f_0)\}\cup \{\sqrt 2J_n^{(r)}(f_n), \sqrt 2 J_n^{(i)}(f_n)\}_{n \in (\Z^d)_{+}}$ is a family of independent mean-zero Gaussian random variables
with variance $ \|f_n\|_{\Hs^\be}^2$.
In particular,  
the map $I_1$ is an isometry from
$\l^2(\Z^d;\Hs^\be(\R_+))$ into $L^2(\Omega, \s(I_1) , \PP)$, where $\s(I_1)$ is the $\s$-algebra generated by the process\footnote{Here, we view $I_1$ as a process
indexed by  $f \in \l^2(\Z^d;\Hs^\be(\R_+))$.}
$I_1= \{I_1[f]: f \in \l^2(\Z^d;\Hs^\be(\R_+))\}$. 
The process $I_1$ is known as an isonormal Gaussian process associated with the Hilbert space 
$\l^2(\Z^d;\Hs^\be(\R_+))$,  satisfying
\begin{align*}
%\label{BM2}
\E\big[I_1[f] I_1[g]\big] = \jb{f,g}_{\l^2_n \Hs_t^\be }
\end{align*}

\noi
for any $f,g\in 
\l^2(\Z^d;\Hs^\be(\R_+))$;
see \cite[Definition 1.1.1]{Nualart06}.

Given $k \in \N \cup\{0\}$, 
we define the $k$th homogeneous Wiener chaos $\HH_k$ 
to be  the closed linear subspace of $L^2(\Omega)$ generated by
\begin{align*}
\big\{ H_k(I_1[f]) : \, f\in \l^2(\Z^d;\Hs^\be(\R_+)),  \, \|f\|_{\l^2_n\Hs^\be_t} = 1   \big\},
\end{align*}

\noi
where $H_k$ denotes the  Hermite polynomial of degree $k$, 
defined via the following generating function:
\begin{equation*}
 e^{tx - \frac{1}{2} t^2} = \sum_{k = 0}^\infty \frac{t^k}{k!} H_k(x).
 \end{equation*}
	
\noi
For readers' convenience, we write out the first few Hermite polynomials:
\begin{align*}
\begin{split}
& H_0(x) = 1, 
\quad 
H_1(x) = x, 
\quad
H_2(x) = x^2 - 1,   
\quad
 H_3(x) = x^3 - 3 x.
\end{split}
\end{align*}

\noi
The spaces $\HH_k$ and $\HH_j$ are orthogonal when $k\neq j$, 
and the real Hilbert space $L^2(\Omega, \s(I_1), \PP)$ admits the following 
Wiener-Ito decomposition: % as an orthogonal sum of the subspaces $\HH_k$:
\begin{align*}
L^2(\Omega, \s(I_1), \PP) = \bigoplus_{k=0}^\infty \HH_k.
\end{align*}

\noi
Note that given any $f\in \l^2(\Z^d;\Hs^\be(\R_+))$, the stochastic integral $I_1[f]$ is an element of the first 
homogeneous Wiener chaos~$\HH_1$.
We now state the Wiener chaos estimate, which follows from the hypercontractivity of the Ornstein-Uhlenbeck semigroup
 due to Nelson \cite{Nelson2};
 see 
 \cite[Theorem~I.22]{Simon}.

\begin{lemma}[Wiener chaos estimate]
\label{LEM:hyp}
Let $k\in\N$.
Then, given any finite $p \ge 1$ and $F \in \H_k$,  we have
\begin{align*}
\| F \|_{L^p(\Omega)} \le p^{\frac k 2 } \| F \|_{L^2(\Omega)}.
\end{align*}

\end{lemma}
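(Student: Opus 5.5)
The statement is the Wiener chaos estimate, and the plan is to derive it from Nelson's hypercontractivity of the Ornstein--Uhlenbeck semigroup $P_t = e^{-tL}$ acting on $L^2(\Om, \s(I_1), \PP)$. Here $L$ is the number operator (Ornstein--Uhlenbeck generator) attached to the isonormal Gaussian process $I_1$ on $\l^2(\Z^d;\Hs^\be(\R_+))$. The key structural fact is that $\HH_k$ is an eigenspace of $L$ with eigenvalue $k$. Consequently, for $F \in \HH_k$ we have $P_t F = e^{-kt} F$. I would verify this on the generators $H_k(I_1[f])$ with $\|f\|_{\l^2_n\Hs^\be_t}=1$ via Mehler's formula, and then pass to the closed linear span.

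Next, I would invoke Nelson's hypercontractivity theorem in the form of \cite[Theorem~I.22]{Simon}. For $p \ge 2$ and $e^{2t} = p-1$, it gives
\[
\|P_t G\|_{L^p(\Om)} \le \|G\|_{L^2(\Om)}
\]
for all $G \in L^2(\Om)$. Nothing here depends on the particular Hilbert space: the statement is dimension-free, so it transfers to the isonormal process by approximating with finitely many Gaussian coordinates, e.g. an orthonormal basis of the Hilbert space.

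Applying this with $G = F$ gives $e^{-kt}\|F\|_{L^p} \le \|F\|_{L^2}$, that is,
\[
\|F\|_{L^p(\Om)} \le (p-1)^{\frac k2}\|F\|_{L^2(\Om)} \le p^{\frac k2}\|F\|_{L^2(\Om)} .
\]
For $1 \le p < 2$, Hölder's (Jensen's) inequality gives $\|F\|_{L^p} \le \|F\|_{L^2} \le p^{\frac k2}\|F\|_{L^2}$, since $p \ge 1$.

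The only delicate point is the identification $P_t|_{\HH_k} = e^{-kt}$ in the present setting. Here the process is built from complex fractional Brownian motions with the reality constraint \eqref{fBM1}. However, $\l^2(\Z^d;\Hs^\be(\R_+))$ is a real Hilbert space and $I_1$ is a genuine real isonormal Gaussian process on it. Therefore the standard theory \cite{Nualart06} applies verbatim, and no further obstacle arises.
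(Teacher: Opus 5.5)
Your proposal is correct and follows the paper's route: the paper gives no argument beyond citing Nelson's hypercontractivity of the Ornstein--Uhlenbeck semigroup \cite[Theorem~I.22]{Simon}. Your argument ($P_tF = e^{-kt}F$ on $\HH_k$, choosing $e^{2t} = p-1$, and using H\"older for $1 \le p < 2$) is the standard derivation behind that citation, and it even gives the slightly sharper constant $(p-1)^{\frac k2}$ for $p \ge 2$.
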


Lastly, we introduce multiple stochastic integrals with respect to the fractional Brownian motions 
$\{B_n\}_{n\in\Z^d}$ with Hurst parameter $\frac 12  \le \be < 1$, satisfying \eqref{fBM1}.
Fix  an integer $k \ge 2$.
Given  $f\in \l^2_{n}( (\Z^d)^{k} ; \Hs^\be(\R_+^{k})  )$, where 
$\Hs^\be(\R_+^{k})$ 
is as in  \eqref{BM0a}
and \eqref{BM1},  
 we define its symmetrization by
\begin{align*}
\Sym(f) (z_1,  \ldots, z_k)  
= \frac{1}{k!} \sum_{\s\in \S_k} f(z_{\s(1)},  \ldots, z_{\s(k)}),
\end{align*}

\noi
where 
$z_j = (t_j, n_j)$
and $\S_k$ denotes the symmetric group on $\{1, \dots, k\}$.
We denote by  
$\big(\l^2( (\Z^d)^{k} ; \Hs^\be(\R_+^{k})  )\big)^{ \Sym}$  the subspace of symmetric functions in 
$\l^2( (\Z^d)^{k} ; \Hs^\be(\R_+^{k})  )$.

We now introduce 
 the notion of a multiple Wiener integral $I_k$.

\begin{definition} \rm
Let $k\in\N$.
The $k$th multiple Wiener integral $I_k$ is an isometry 
(up to a constant factor; see \eqref{ISO1})
from 
$\big(\l^2( (\Z^d)^{k} ; \Hs^\be(\R_+^{k})  )\big)^\Sym$
into the Wiener chaos $\HH_k$, uniquely determined by
\begin{align*}
I_k[ h_1^{\otimes k_1} \otimes \cdots \otimes h_n^{\otimes k_n}  ] = \prod_{j=1}^n H_{k_j} (I_1[h_j])
\end{align*}

\noi
for any orthonormal elements $h_1, \ldots, h_n \in 
\l^2(\Z^d;\Hs^\be(\R_+))$ and  $k_1, \dots, k_n \in \N\cup\{0\}$
such that  $k = k_1 + \ldots + k_n$, 
where $H_{k}$ denotes the  Hermite polynomial of degree $k$ and $I_1$ is as in~\eqref{I1}.

\smallskip

\begin{itemize}
\item
For general (possibly non-symmetric)  $f\in
\l^2( (\Z^d)^{k} ; \Hs^\be(\R_+^{k})  )$, 
we set 
\[I_k[f] = I_k [ \Sym(f)].\]

\smallskip
\item
Given $f\in
\l^2( (\Z^d)^{k} ; \Hs^\be(\R_+^{k})  )$
 and $g\in \l^2( (\Z^d)^\l ; \Hs^\be(\R_+^{\l})  )$
for some $k, \l \in \N$, we have
\begin{align}
\E \big[ I_k[f] I_\l[g] \big] = k! \cdot \ind_{k=\l} \cdot \jb{\Sym(f), \Sym(g)}_{\l^2_{n_1, \dots, n_k} \Hs^\be_{t_1, \dots, t_k}}, 
\label{ISO1}
\end{align}

\noi
where
$\l^2_{n_1, \dots, n_k} \Hs^\be_{t_1, \dots, t_k}$
is a short-hand notation for 
$\l^2( (\Z^d)^{k} ; \Hs^\be(\R_+^{k})  )$.

\smallskip
\item
When $\be=\frac{1}{2}$, the multiple Wiener integrals agree with the iterated Wiener-Ito integrals with respect to 
a family  $\{B_n \}_{n \in \Z^d}$ of mutually independent standard Brownian motions, 
satisfying \eqref{fBM1}.
Furthermore, suppose that $f$ is symmetric.
Then, we have 
\begin{align*}
& I_k[f] 
 = k! \sum_{n_1, \cdots, n_k \in \Z^d} 
%\int_{0}^\infty \int_{0}^{t_1} \cdots \int_{0}^{t_{k-1}} 
\intt_{\Dl_k}
f(t_1, n_1, \dots, t_k, n_k)
 dB_{n_k}(t_k) \cdots dB_{n_1}(t_1),
\end{align*}

\noi
where 
$\Dl_k
= \big\{ (t_1, \ldots, t_k) \in \R_+^k: \ t_i > t_j 
\text{ for } i < j\big\}$.
Here,  the iterated integral on the right-hand side is understood as an iterated 
Ito integral; see \cite[p.\,23]{Nualart06}.

\end{itemize}

\end{definition}

\subsection{Random tensor estimate}
\label{SUBSEC:RT}

In this subsection, we provide the basic definition of tensors and 
state the random tensor estimate (Lemma \ref{LEM:RT}).
See~\cite[Sections 2 and 4]{DNY3}, \cite[Section~4]{Bring}, 
\cite[Appendix C]{OWZ}, 
and \cite[Section 2]{OW3}
for further discussions.

\begin{definition} \label{DEF:tensor} \rm
Let $A$ be a finite index set. We denote by $n_A$ the tuple $ \{n_j \}_{j \in A}$. 
 A tensor $h = h_{n_A}$ is a function: $(\Z^d)^{A} \to \mathbb{C} $ with the input variables $n_A$. Note that the tensor $h$ may also depend on $\o \in \Om$. 
 The support of a tensor $h$ is the set of $n_A$ such that $h_{n_A} \neq 0$. 

Given a finite index set  $A$, 
let $(B, C)$ be a partition of $A$. We define the norms 
 $\| \cdot \|_{n_A}$ and 
$\| \cdot \|_{n_{B} \to n_{C}}$ by 
\[ \| h \|_{n_A}  = \|h\|_{\l^2_{n_A}} = \bigg(\sum_{n_A} |h_{n_A}|^2\bigg)^\frac{1}{2}\]
and
\begin{align*}
  \| h \|^2_{n_{B} \to n_{C}} = \sup \bigg\{ 
\sum_{n_{C}} \Big| \sum_{n_{B}} h_{n_A} f_{n_{B}} \Big|^2 :  \| f \|_{\l^2_{n_{B}}} =1  \bigg\},  
%\label{Z0a}
\end{align*}

\noi
where  we used the short-hand notation $\sum_{n_Z} = \sum_{n_Z \in (\Z^d)^Z}$ for a finite index set $Z$.
By duality, we have  $\| h \|_{n_{B} \to n_{C}} = \| h \|_{n_{C} \to n_{B}} 
= \| \cj h \|_{n_{B} \to n_{C}}$ for any tensor $h = h_{n_A}$. 
If $B = \varnothing$ or $C = \varnothing$,  then we have
$  \| h \|_{n_{B} \to n_{C}} = \| h \|_{n_A}$.
\end{definition}

We  now state the random tensor estimate
for multiple stochastic integrals
from \cite{OWZ, CLO2, COZ}.
The random tensor estimate 
was first introduced 
in a breakthrough work \cite{DNY3}
by Deng, Nahmod, and Yue 
in the context of random variables;
see also 
\cite{Bring, OW3}.
See also  \cite{BO96} for a precursor of
the random tensor estimate.
In \cite{OWZ}, 
the third author with Wang and Zine
extended it 
to treat the case of iterated Wiener-Ito integrals
(namely, with the Hurst parameter $\be = \frac 12$), 
which was further extended
in~\cite{CLO2, COZ}
to the case of 
multiple stochastic integrals
with respect to fractional Brownian motions of Hurst parameter $\frac12 < \be <  1$.
In the following, we state a slightly simplified
version of the random tensor estimate from \cite{CLO2},
which is sufficient for our purpose.
See \cite{CLO2}
for a more general statement and its proof
(which follows closely the presentation in \cite{OWZ}, 
corresponding to the $\be = \frac 12$ case).

\begin{lemma}
\label{LEM:RT}

Fix $\frac 12 \le \be < 1$
and let $A$ be a finite index set with $k = |A| \ge 1$. 
Given a  tensor  $h=h_{bc n_A} \in \l^2_{bcn_A}$ 
with  $n_A \in (\Z^d)^{A}$ and $(b,c) \in (\Z^d)^{m}$ for some integer $m\ge2$, 
satisfying
\begin{align*}
h_{-b, -c, -n_A} = \cj{h_{bc n_A}}, 
\quad 
(b,c) \in (\Z^d)^{m}, \ 
n_A \in (\Z^d)^{A}, 
%\label{BM3}
\end{align*}

\noi
 define the random tensor $H = H_{bc}$  by
\begin{align*}
H_{bc} =   \sum_{n_A} I_k \big[ h_{bcn_A} f_{bc} (t_A, n_A) \big]
\end{align*}

\noi
for $f\in \l^\infty_{bc n_A}( (\Z^d)^{k+m} ; \Hs^\be(\R_+^{k})  )$, satisfying
\begin{align*}
f_{-b, -c, -n_A} = \cj{f_{bc n_A}}, 
\quad 
(b,c) \in (\Z^d)^{m}, \ 
n_A \in (\Z^d)^{A}, 
%\label{BM4}
\end{align*}

\noi
where 
$\Hs^\be(\R_+^{k})$ 
is as in  \eqref{BM0a} and \eqref{BM1}
\textup{(}see also \eqref{BM0}\textup{)}
and 
$I_k$ denotes the multiple stochastic integral 
defined in Subsection~\ref{SUBSEC:FBM}.
Then, given any $\ta > 0$
and finite $p \ge1$, we have
\noi
\begin{align*}
\big\| \| H_{bc} \|_{b \to c} \big\|_{L^p(\Om)}
&
\les p^\frac k2
\|f_{bc}(t_A,n_A)\|_{\l^\infty_{bc n_A}\Hs^\be_{t_A}}
\| h_{bcn_A} \|_{\l^2_{bcn_A}}^{\ta}\\
 &
 \quad \times \Big( \max_{(B,C)} \|    h_{bcn_A} \|_{b n_B \to c n_C} \Big)^{1 - \ta} , 
\end{align*}

\noi
where the maximum is taken over  all partitions $(B,C)$ of $A$.

\end{lemma}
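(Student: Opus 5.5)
The plan is to follow the Deng--Nahmod--Yue strategy and control the operator norm of $H$ through a trace of a high power of $HH^*$. Fix a large integer $L$, to be chosen depending on $\ta$. For any matrix-valued $H=H_{bc}$ we have
\[
\|H\|_{b\to c}^{2L}\le \operatorname{tr}\big((HH^*)^L\big)
=\sum_{b_1,c_1,\dots,b_L,c_L}\prod_{j=1}^{L}H_{b_jc_j}\cj{H_{b_{j+1}c_j}},
\]
with the convention $b_{L+1}=b_1$. By monotonicity of $L^p(\Om)$ norms it suffices to treat $p\ge 2L$. We then have
\[
\big\|\|H\|_{b\to c}\big\|_{L^p(\Om)}\le \big\|\operatorname{tr}((HH^*)^L)\big\|_{L^{p/(2L)}(\Om)}^{1/(2L)}.
\]
The trace is a polynomial of degree $2L$ in the $I_k$'s, so it lies in the finite sum of chaoses $\bigoplus_{j\le 2Lk}\HH_j$. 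Applying Lemma~\ref{LEM:hyp} to each chaos component reduces matters to $L^2(\Om)$, and the loss is $(p/2L)^{Lk}$. After taking the $\frac1{2L}$ power this becomes $p^{k/2}$ up to constants depending on $L$. This is the source of the $p^{\frac k2}$ factor.

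For the $L^2(\Om)$ bound, I will expand each product of $2L$ multiple integrals $I_k[\cdots]$ by the product formula for multiple Wiener integrals. This writes the product as a sum over partial pairings of the $2Lk$ variables $(t,n)$. Paired variables are contracted against the $\Hs^\be$ inner product, and unpaired ones remain inside an $I_{\text{rest}}$. The second moment of each resulting term is computed by \eqref{ISO1}, which gives a full pairing of two copies. In every case the time dependence enters only through inner products of the $f_{bc}(t_A,n_A)$ in $\Hs^\be$, and by Cauchy--Schwarz each of these is bounded by $\|f\|_{\l^\infty_{bcn_A}\Hs^\be_{t_A}}$. The conjugation symmetries of $h$ and $f$ ensure that everything is real and that the pairings respect $n\leftrightarrow -n$. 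After these reductions we are left with finitely many (at most $C(L,k)$) deterministic contractions. Each is a closed network of $4L$ copies of $h$ and $\cj h$, glued cyclically along the $b,c$ indices and by some pairing among the $n_A$ indices.

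The main obstacle is the combinatorial estimate of such a network. I will bound it by the quantity $\|h\|_{\l^2}^{C}\big(\max_{(B,C)}\|h\|_{bn_B\to cn_C}\big)^{4L-C}$, with $C$ independent of $L$. To do this, I will cut the network open and view it as a composition of operators. Each copy of $h$ acts as an operator from its incoming indices $(b,n_B)$ to its outgoing indices $(c,n_C)$, where $B$ is chosen to contain exactly those $n$-legs that are paired with earlier copies in the cyclic order. These operators are then composed and estimated by operator norms. Only a bounded number of copies cannot be arranged this way: those closing the cycle, and legs paired within the same copy or across the cut. For these I will use the Hilbert--Schmidt norm $\|h\|_{\l^2}$, which allows a trace to be bounded by Hilbert--Schmidt norms.

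Granting this bound, taking the $\frac1{4L}$ power gives exponent $C/(4L)$ on $\|h\|_{\l^2}$ and $1-C/(4L)$ on the maximum. Since $\max_{(B,C)}\|h\|_{bn_B\to cn_C}\le\|h\|_{\l^2}$, any smaller exponent on $\|h\|_{\l^2}$ may be traded for a larger one. Choosing $L\ge C/(4\ta)$ therefore yields the stated estimate for every $\ta>0$.
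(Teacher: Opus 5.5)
Your overall route is the Deng--Nahmod--Yue moment method that the paper defers to \cite{OWZ, CLO2}. You bound $\|H\|_{b\to c}^{2L}$ by $\operatorname{tr}((HH^*)^L)$, reduce to $L^2(\Om)$ via Lemma~\ref{LEM:hyp}, expand by the product formula, and contract the network copy by copy. You pay $\|h\|_{\l^2}$ only for the $O(1)$ copies whose $b$- and $c$-legs are both incoming or both outgoing. There are in fact no intra-copy pairings, since each $I_k$ is already Wick ordered. That combinatorial part is fine.

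The gap is where you discard the time dependence. The time integrals do not produce separate inner products. They produce a single number $W$: the $\Hs^\be$-contraction, along the pairing, of the $4L$ multivariable functions $f_{b_jc_j}(\cdot,n_A^{(j)})$. This $W$ depends jointly on all the discrete indices $(b_j,c_j,n_A^{(j)})$ of all $4L$ copies. Pointwise in those indices you do have $|W|\le\|f\|^{4L}_{\l^\infty_{bcn_A}\Hs^\be_{t_A}}$, by iterated Cauchy--Schwarz after applying $|\dd_t|^{\frac12-\be}$ to each time leg when $\be>\frac12$. But to use this inside the discrete sum you must take absolute values, and what remains is a network of $|h|$, not of $h$ and $\cj h$. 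Operator norms do not survive this step: $\||h|\|_{bn_B\to cn_C}$ can be far larger than $\|h\|_{bn_B\to cn_C}$. If you keep $W$ instead, it is a weight coupling all the copies, and the copy-by-copy operator composition is no longer available.

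This is not a technicality: if $f$ may depend on $(b,c)$, the inequality as stated fails. Take $d=1$, $k=1$, $m=2$, $n_0\neq0$, $N$ a power of $2$, and an orthogonal Hadamard matrix $U$ with entries $\pm N^{-\frac12}$. For $1\le b,c\le N$ set $h_{bcn}=U_{bc}\ind_{n=n_0}$ and $f_{bc}(t,n_0)=N^{\frac12}U_{bc}\ind_{[0,1]}(t)$. Extend both by the conjugation symmetry and set them to zero elsewhere. Then:
\begin{itemize}
\item $H_{bc}=N^{-\frac12}B_{n_0}(1)$ for all $1\le b,c\le N$, so $\|H\|_{b\to c}\ge N^{\frac12}|B_{n_0}(1)|$;
\item on the other side, $\|f\|_{\l^\infty\Hs^\be}=1$, $\|h\|_{\l^2}=(2N)^{\frac12}$, and $\|h\|_{bn_1\to c}=\|h\|_{b\to cn_1}=1$;
\item so the right-hand side is $\les p^{\frac12}N^{\frac\ta2}$, which is false for $\ta<1$ as $N\to\infty$.
\end{itemize}

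What your argument actually proves is one of two weaker statements. The first is the estimate with $\max_{(B,C)}\||h|\|_{bn_B\to cn_C}$ in place of $\max_{(B,C)}\|h\|_{bn_B\to cn_C}$. The second is the stated estimate when $f=f(t_A)$ does not depend on $(b,c,n_A)$, since then $W$ is a constant for each pairing and factors out. The first version is all the paper needs. There the kernels $\ff$ are nonnegative, and the operator norms in \eqref{YG3x}, \eqref{RGG4a}, \eqref{ZRGG6} are obtained by Cauchy--Schwarz on absolute values, so they bound $|h|$ as well.
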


We note that the Wiener chaos estimate (Lemma \ref{LEM:hyp})
plays a crucial role in the proof of Lemma~\ref{LEM:RT}.

\begin{remark} \label{REM:RT}\rm

In the proofs of Propositions \ref{PROP:drive2}, \ref{PROP:drive3},
and \ref{PROP:drive4} to study regularity properties
of the drivers $\XX^j$, $j = 1, 2, 3$,   
we apply 
 the random tensor estimate (Lemma \ref{LEM:RT}).
In estimating the $\Hs_{t_A}^\be$-norm, 
we first apply (multi-parameter) Sobolev's inequality
and bound it by the $L^\frac1\be$-norm; see
\eqref{YS1}, \eqref{ZYS1}
and \eqref{ZZYS3}.
In the current parabolic setting, 
this suffices for our purpose since there is no  oscillatory
cancellation to exploit.
In the dispersive setting \cite{CLO2, COZ, CGLLO2, OShao, CLOO}, 
however, such an approach based on Sobolev's inequality 
would be too crude
and  we need to 
estimate the $\Hs_{t_A}^\be$-norm directly 
by exploiting 
subtle oscillatory
cancellations.

\end{remark}

\section{Young case}
\label{SEC:Young}

In this section, we consider the Young case (= the fractional-in-time case with 
the Hurst parameter $\frac 12 < \be < 1$).
As mentioned in Section \ref{SEC:1}, 
our main task is to give a pathwise meaning
to the stochastic convolution $\Psi(u)$
as the convolution Young integral $\I^{\XX^1}(u)$
with the first order driver $\XX^1$ in \eqref{sto1x},
which allows us to rewrite 
the Duhamel formulation~\eqref{NLH2}
as the following 
 convolution YDE:
\begin{align}
u(t) = S(t) u_0 + \I^{\XX^1}(u)(t).
\label{yo12}
\end{align}

\noi
In Subsection \ref{SUBSEC:Y1}, 
we briefly go over the construction of a convolution Young integral
for readers' convenience.
In Subsection \ref{SUBSEC:Y2},  
we then use the random tensor estimate (Lemma~\ref{LEM:RT}) 
to study  regularity properties
of the first order driver $\XX^1$ in \eqref{sto1x}
(including the case $\be = \frac12$).
In Subsection \ref{SUBSEC:Y3}, 
by making a suitable choice of parameters, 
we prove global well-posedness
of the convolution YDE \eqref{yo12}, 
thus establishing  Theorem \ref{THM:1}.

\subsection{Convolution Young integral}
\label{SUBSEC:Y1}

In this subsection,  we recall the computation from 
\cite[Section 4]{GT10} for readers' convenience.
We first define 
the auxiliary  operator $\YY$    by 
\begin{align}
\begin{split}
\YY_{t, r}(f)
&  =  \int_r^t
S_{t- t'} [  f
\phi dW^\be_{t'} ]
%\YY^j_{t, r} 
%& = \YY_{t, r} \circ \YY^{j-1}_{\bul, r}, \quad j \ge 2, 
\end{split}
\label{Y1}
\end{align}

\noi
for $t> r \ge 0$, 
 where  $f$ is a function on $\T$.
Then, 
from \eqref{psi1} and \eqref{Y1}
with \eqref{dl3}, 
we formally have 
\begin{align}
\big(\ft \updl \Psi(u)\big)_{t, r} =  \YY_{t, r} (u_\bul), 
\label{yo2}
\end{align}

\noi
where $\bul$ denotes the variable of integration.
By replacing $\bul$ in \eqref{yo2} by the left endpoint $r$, we formally have 
\begin{align}
\YY_{t, r} (u_\bul)
=   \XX^1_{t, r} (u_r) 
+ Q_{t, r}, 
\label{yo3}
\end{align}

\noi
where $\XX^1$ is the first order driver defined in \eqref{sto1x}.
Here,  $Q$ is formally given as
$Q_{t, r}  = 
 \YY_{t, r} ((\ft \updl u)_{\bul, r})$
 but this expression does not play any role in the following.

Our goal is to find {\it one} error term $Q$
with sufficient regularity, 
which will allow us to define 
 the convolution Young integral 
$\I^{\XX^1}(u)$
in the pathwise manner
as the unique limit of Riemann-Stieltjes type sums;
see \eqref{yo10} below.

We recall the convolution sewing lemma
(\cite[Theorem 3.5]{GT10}), 
which will be the main tool for constructing the convolution Young integral
$\I^{\XX^1}(u)$
(and also the convolution rough integral
$\I^{ \XX^1, \XX^2, \XX^3}(u)$ in Section \ref{SEC:rough}).
See also 
\cite[Theorem 2.4]{GH}
and 
\cite[Exercise 4.16]{FH20}.

\begin{lemma}[convolution sewing lemma]\label{LEM:sew2}
Let $\g > 1$, $s \in \R$, and $T > 0$.

\smallskip
\begin{enumerate}

\item[(i)] 
There exists a unique linear map \textup{(}called the convolution sewing map\textup{)}
$\ft \Ld: C^{\g}_{3,T} H^s(\T) \cap 
\Ker \ft \updl|_{ C_{3,T}H^s_x}
\to C^{\g}_{2,T}H^s(\T)$ such that 
\begin{align*}
\ft \updl \ft \Ld h = h
%\label{CS1}
\end{align*}

\noi
for each  $h\in  C_{3,T}H^s(\T^d)\cap \Ker \ft  \updl|_{ C_{3,T} H^s_x}$, 
satisfying 
\begin{align*}
 \|\ft \Ld h\|_{C^{\g-\ta}_{2, T}H^{s+2\ta}_x}
\les \| h\|_{C^\g_{3, T} H^s_x}
%\label{CS2}
\end{align*}

\noi
for any $0 \le \ta < 1$ with $\ta \le \g$.

\smallskip
\item[(ii)] 
Given any  $g\in  C_{2,T}H^s(\T)$ 
with 
$\ft \updl g\in C^\g _{3,T}H^s(\T)$, 
 there exists  unique
$f\in C([0, T];H^s(\T))$  \textup{(}modulo an additive  constant\textup{)} 
such that 
$\ft \updl f = (\Id - \ft \Ld \ft  \updl)g$. 
In addition, 
we have 
\begin{align*}
(\ft \updl f)_{t,r} = \lim_{|\Pi([r,t])|\to 0} 
\sum_{j=0}^{n-1} S_{t- t_j} g_{t_j,t_{j+1}}
%\label{sew2}
\end{align*}

\noi
 for any $(t,r)\in \Dl_{2,T}$,
 where 
 the limit is over any partition
 $\Pi([r,t])$  
 of  $[r,t]$\textup{:} 
\[\Pi ([r,t]) = \{r = t_{n} < \dots < t_1 <  t_0 = t\}\]
whose mesh size 
$|\Pi([r,t])| = \max_{j} |t_j-t_{j+1}|$ 
tends to $0$.

\end{enumerate}

\end{lemma}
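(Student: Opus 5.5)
The plan follows the classical sewing lemma of Gubinelli, adapted to the convolution coboundary $\ft \updl$, in the manner of \cite[Theorem 3.5]{GT10}. For part (i), uniqueness comes first. Suppose $\ft\Ld_1 h$ and $\ft\Ld_2 h$ both satisfy $\ft\updl \ft\Ld_j h = h$. Then their difference $q \in C^{\g'}_{2,T}H^s$, with $\g' = \g - \ta$ (take $\ta = 0$), satisfies $\ft\updl q = 0$. By exactness of the convolution complex, $q = \ft \updl f$ for some $f \in C_T H^s$. Now split $[r,t]$ into $n$ equal pieces and use $f_t - S_{t-r} f_r = \sum_j S_{t-t_j}(\ft\updl f)_{t_j,t_{j+1}}$ together with \eqref{S1}. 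This bounds $\|q_{t,r}\|_{H^s}$ by $n\cdot (|t-r|/n)^{\g}\to 0$, since $\g > 1$, and hence $q = 0$.

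For existence, I build $\ft\Ld h$ as a limit of dyadic-partition approximations. Since $h \in \Ker \ft\updl$, exactness gives $h = \ft\updl g$ for some $g\in C_{2,T}H^s$. Define
\[ M^{(n)}_{t,r} = \sum_{j} S_{t-t_j} g_{t_j,t_{j+1}} \]
over the $n$th dyadic partition of $[r,t]$. Passing from level $n$ to level $n+1$ inserts midpoints $m$, and the change is $\sum_j S_{t-t_j}(\ft\updl g)_{t_j,m_j,t_{j+1}} = \sum_j S_{t-t_j}h_{t_j,m_j,t_{j+1}}$, up to sign conventions. This has $H^s$-norm at most $2^n(2^{-n}|t-r|)^{\g}$, which is summable because $\g>1$. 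Setting $\ft\Ld h := g - \lim_n M^{(n)}$ then gives $\ft\updl\ft\Ld h = h$, since $\ft\updl$ kills the sewn limit by the usual additivity argument, with the bound $\|\ft\Ld h_{t,r}\|_{H^s}\les |t-r|^{\g}\|h\|_{C^\g_{3,T}H^s}$. The seminorm in \eqref{Ho2} uses an infimum over splittings $|t_1-t_2|^\al|t_2-t_3|^{\g-\al}$, and both factors are $\le$ (piece length)$^{\g}$ overall, so this causes no trouble.

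The smoothing gain $H^{s}\to H^{s+2\ta}$ at cost $|t-r|^{-\ta}$ is the main obstacle. In the sum of the increments above, each term carries a factor $S_{t-t_j}$, and the parabolic estimate $\|S_{\tau} f\|_{H^{s+2\ta}}\les \tau^{-\ta}\|f\|_{H^s}$ for $0\le\ta<1$ applies to it. The term at the last subinterval, with $t-t_j$ small, receives no help, so I would not apply the smoothing termwise naively. Instead I would regroup: write $\ft\Ld h_{t,r}$ via a telescoping over a geometric (Whitney-type) decomposition of $[r,t]$ accumulating at $t$, namely $\tau_k = t - 2^{-k}(t-r)$. Using $\ft\updl\ft\Ld h = h$ gives
\[ (\ft\Ld h)_{t,r} = \sum_k \Big( S_{t-\tau_{k}}(\ft\Ld h)_{\tau_{k},\tau_{k-1}}\text{-type terms} + S_{t-\tau_k} h_{\dots}\Big) \]
plus a remainder that vanishes in the limit. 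The $k$th term has length scale $2^{-k}|t-r|$ and is smoothed by $S$ at time scale $\sim 2^{-k}|t-r|$. It therefore contributes $(2^{-k}|t-r|)^{-\ta}(2^{-k}|t-r|)^{\g}$ in $H^{s+2\ta}$, and the sum over $k$ converges since $\g-\ta>0$. This gives the $C^{\g-\ta}_{2,T}H^{s+2\ta}$ bound. The restriction $\ta<1$ is needed for the semigroup estimate, and $\ta\le\g$ is needed for convergence.

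Part (ii) then follows directly. Given $g$ with $\ft\updl g\in C^\g_{3,T}H^s$, the element $h=\ft\updl g$ lies in $\Ker\ft\updl$ because $\ft\updl\circ\ft\updl=0$. Part (i) then makes $(\Id-\ft\Ld\ft\updl)g$ well defined, and $\ft\updl$ applied to it is $h - h = 0$. By exactness it equals $\ft\updl f$ for some $f$, unique modulo $S_tV$ (the stated ``additive constant''). For the Riemann-sum formula, write $g = \ft\updl f + \ft\Ld h$ and sum over a partition. The $\ft\updl f$ parts telescope exactly to $(\ft\updl f)_{t,r}$ by the identity used in the uniqueness step. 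The $\ft\Ld h$ parts are bounded by $\sum_j|t_j-t_{j+1}|^{\g}\le |\Pi|^{\g-1}|t-r|\to0$.
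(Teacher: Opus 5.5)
Your uniqueness argument, the dyadic construction of $\ft\Ld h$ with the $\ta=0$ bound, and your proof of part (ii) are correct: they are the standard sewing argument transported to $\ft\updl$. The paper gives no proof of its own; it defers to \cite[Theorem 3.5]{GT10} and only ever applies the lemma with $\ta=0$. One point to tighten: the ``usual additivity argument'' needs convergence of the Riemann sums along arbitrary partitions, since a dyadic partition of $[r,t]$ need not contain a prescribed intermediate time $u$. You can get this, for instance, by successively removing a point $t_j$ with $|t_{j-1}-t_{j+1}|\le 2|t-r|/(k-1)$.

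The smoothing step is where the proposal fails. Iterate $\ft\updl q=h$, with $q=\ft\Ld h$, at the points $\tau_k=t-2^{-k}\ell$, where $\ell=t-r$. This gives
\[
q_{t,r}=q_{t,\tau_K}+\sum_{k=1}^{K}S_{t-\tau_k}q_{\tau_k,\tau_{k-1}}+\sum_{k=1}^{K}h_{t,\tau_k,\tau_{k-1}} .
\]
The $h$-terms have first time $t$, so no semigroup acts on them. The factor $(2^{-k}\ell)^{-\ta}$ you assign to them does not exist, and $h$ is only $H^s$-valued.

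The missing ingredient is the cocycle identity $\ft\updl h=0$, that is, $h_{t_1t_3t_4}+h_{t_1t_2t_3}=h_{t_1t_2t_4}+S_{t_1-t_2}h_{t_2t_3t_4}$. It turns the last sum into $h_{t,\tau_K,r}+\sum_{k=1}^{K-1}S_{t-\tau_{k+1}}h_{\tau_{k+1},\tau_k,r}$. Now, however, each triple has one short and one long increment. Under $\|h_{t_1t_2t_3}\|_{H^s}\le C|t_1-t_2|^{\al}|t_2-t_3|^{\g-\al}$, the $k$th term is only $O\big((2^{-k}\ell)^{\al-\ta}\ell^{\g-\al}\big)$ in $H^{s+2\ta}$. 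This is summable if and only if $\ta<\al$.

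This limitation is genuine, not an artifact of the method. Take $1<\g<2$, $\g-1<\al<\ta<1$, $\lambda=\jb{n}^2$, and
\[
q_{t,r}=\lambda^{-\al}(t-r)^{\g-\al}(1-e^{-\lambda(t-r)})^{\al}e_n .
\]
Then $\|q\|_{C^\g_{2,T}H^s}\le\jb{n}^s$. Scaling to $\lambda=1$, one checks that $h=\ft\updl q$ obeys the $(\al,\g-\al)$ bound with constant $\les\jb{n}^s$, uniformly in $n$. Yet $q=\ft\Ld h$ satisfies $\|q_{T,0}\|_{H^{s+2\ta}}\ges\jb{n}^{s+2\ta-2\al}T^{\g-\al}$ once $\lambda T\ge 1$.

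So from $\|h\|_{C^\g_{3,T}H^s}$ alone, the gain $H^{s+2\ta}$ is available only for $\ta$ below the splitting exponent $\al$. The bound as stated cannot hold for every $\ta<1$; this is harmless for the paper, which never uses $\ta>0$. Reaching every $\ta<1$ would require extra smoothing information on $h$ in its first time increment, which your argument neither assumes nor derives.
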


\medskip

Let $\XX^1$ be as in \eqref{sto1x}.
Then, we have
\begin{align}
\wt \updl \XX^1 = 0, 
\label{chen3}
\end{align}

\noi
where $\wt \updl$ is as in \eqref{dl4}.
{\it Suppose} that  we have 
\begin{align}
\XX^1 (\o)
\in  C_{2, T}^{\g} \L(H^s(\T))
\label{yo4}
\end{align}

\noi
for some $s \in \R$,  $\frac 12  < \g < 1$, and $\o \in \Om$.
In the following discussion, we suppress the $\o$-dependence.
In addition, 
 we assume that 
\begin{align}
u \in \ft \CC_{T}^\al H^s(\T)
\label{yo5}
\end{align}

\noi
for some $\frac 12<  \al < 1$, 
where 
$\ft \CC_{T}^\al H^s(\T)$ is as in \eqref{Ho4}.

By applying 
the convolution coboundary operator $\ft \updl$ in \eqref{dl3}
to \eqref{yo3}
and noting that, in view of  \eqref{yo2},  the left-hand side of \eqref{yo3}
formally vanishes under the application of $\ft \updl$, 
any error term $Q$ (if it exists) satisfies
\begin{align*}
\ft \updl Q 
=  - \ft \updl \big(\XX^1 (u) \big)
= \XX^1 (\ft \updl u ) 
 \in C_{3, T}^{\al + \g}H^s(\T), 
\end{align*}

\noi
where the second equality follows from 
\eqref{prod2}
and \eqref{chen3}.
By the assumptions \eqref{yo4} and \eqref{yo5}, 
we have $\al + \g > 1$.
Thus, 
 we can apply the convolution sewing lemma (Lemma \ref{LEM:sew2})
to {\it define} an error term $Q$ by the relation:
\begin{align}
Q =   - \ft \Ld \ft \updl 
\big(\XX^1 (u) \big)
\in C_{2, T}^{\al + \g}H^s(\T), 
\label{yo6}
\end{align}

\noi
where $\ft \Ld$ denotes the convolution sewing map.
Therefore, from \eqref{yo2}, \eqref{yo3}, and  \eqref{yo6}, 
we  make sense of  the stochastic convolution $\Psi(u)$
as the convolution Young integral $\I^{\XX^1}(u)$
of~$u$ (with respect to the Young driver $\XX^1$ in \eqref{yo4}):
\begin{align}
\Psi(u) = \I^{\XX^1}(u)
\label{yo7}
\end{align}

\noi
where $\I^{\XX^1}(u)(0) = 0$
and its convolution increment is given by 
\begin{align}
\ft \updl \I^{\XX^1}(u)
= 
(\Id - \ft \Ld \ft \updl)\big(\XX^1 (u) \big).
\label{yo9}
\end{align}

\noi
In view of \eqref{yo6} with $\al + \g > 1$, 
the convolution rough integral 
$\I^{\XX^1}(u)$ is given by the unique limit of
Riemann-Stieltjes type sums:
\begin{align}
\begin{split}
\I^{\XX^1}(u)(t) 
& =  \lim_{|\Pi([0,t])|\to 0} 
\sum_{j=0}^{n-1} S_{t- t_j} 
\XX^1_{t_j,t_{j+1}} (u_{t_{j+1}}), 
\end{split}
\label{yo10}
\end{align}

\noi
where the limit is understood in the sense of Lemma \ref{LEM:sew2}\,(ii).
Here, we used the fact that 
\begin{align}
 \lim_{|\Pi([0,t])|\to 0} 
\bigg\|\sum_{j=0}^{n-1} S_{t- t_j} 
Q_{t_j,t_{j+1}}\bigg\|_{H^s_x} 
\le
 \lim_{|\Pi([0,t])|\to 0} 
\sum_{j=0}^{n-1} 
\|Q_{t_j,t_{j+1}}\|_{H^s_x} 
= 0, 
\label{yo11}
\end{align}

\noi
provided that $\al + \g > 1$.

In view of \eqref{yo7}, the Duhamel formulation  \eqref{NLH2}
reduces to the  convolution YDE \eqref{yo12}.
The main task that remains is to verify that \eqref{yo4} holds 
for the first order driver $\XX^1$ defined in \eqref{sto1x}, 
almost surely, 
under suitable assumptions on the relevant parameters.

\subsection{First order driver}
\label{SUBSEC:Y2}

In this subsection, 
we study regularity properties
of the first order driver~$\XX^1$ in~\eqref{sto1x}.
From 
\eqref{W1} and \eqref{phi1}, we have 
\begin{align}
\XX^1_{t, r}(f)
&
 = \sum_{n\in \Z} e_n 
 \int_r^t \sum_{n_1, n_2  \in \Z} \ind_{n = n_{12}}
\cdot  e^{-(t-t')\jb{n}^2}e^{-(t'-r)\jb{n_2}^2} \ft f(n_2) \phi_{n_1}    d B_{n_1}(t')
\label{stoconv1}
\end{align}

\noi
for $t > r \ge 0$, 
where
 $n_{12} = n_1 + n_2$ as in \eqref{short1}.
A direct computation shows that $\XX^1$ satisfies~\eqref{chen3}.

\begin{proposition}
\label{PROP:drive2}

Let  $\frac 12 \le \be < 1$
and $s, s_0, \s \in \R$.
Given 
 $\phi \in \HS(L^2(\T); H^\s(\T))$ satisfying~\eqref{phi1}, 
let  $\XX^1$ be 
the first order driver in \eqref{stoconv1}.
Suppose that 
\begin{align}
\begin{split}
s + \s & > - 2a \be,\\
s_0 & < \min \Big( s\wedge 0 + \s + 2a\be, 
\, s + \s\wedge 0  + 2a \be  \Big)
\end{split}
\label{YS0a}
\end{align}

\noi
for some  $0 \le a < 1$.
 Then, given any finite $p \ge 1$, we have 
\begin{align}
\big\| \|  \XX^1_{t,r}  \|_{\LOP(H^s; H^{s_0})} \big\|_{L^p(\Om)} 
 & \les  p^\frac 12 
\| \phi\|_{\HS(L^2;H^\s)} 
  |t-r|^{(1-a)\be}
\label{YG1}
\end{align}

\noi
for any   $t > r\ge 0$.
Consequently, 
given $0 < \g < (1-a)\be$, 
we have 
\begin{align}
\big\| \| \XX^1\|_{C^{\g}_{2,T} \LOP(H^s;H^{s_0})} \big\|_{L^p(\Om)} \les_T 
p^\frac 12 
 \| \phi\|_{\HS(L^2;H^\s)}
\label{YG1a}
\end{align}

\noi
for any finite $p \ge1$ and $T > 0$.
In particular, 
there exists a version of~$\XX^1$
such that 
\[\XX^1 \in C^{\g}_{2,T} \LOP(H^s(\T);H^{s_0}(\T)), \]

\noi
almost surely.

\end{proposition}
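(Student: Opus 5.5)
We write the operator $\XX^1_{t,r}$ as a random tensor and apply Lemma~\ref{LEM:RT} with $k=1$.
From \eqref{stoconv1}, the Fourier coefficients of $\jb{\dx}^{s_0}\XX^1_{t,r}\jb{\dx}^{-s} g$ are
\begin{align*}
H_{n n_2} = \sum_{n_1} I_1\big[ h_{n n_2 n_1} f_{n n_2}(t', n_1)\big],
\end{align*}
with tensor and time factors
\begin{align*}
h_{n n_2 n_1} = \ind_{n = n_{12}} \jb{n}^{s_0}\jb{n_2}^{-s}\phi_{n_1}, \qquad
f_{n n_2}(t') = \ind_{[r,t]}(t')\, e^{-(t-t')\jb{n}^2 - (t'-r)\jb{n_2}^2}.
\end{align*}
The operator norm $\|\XX^1_{t,r}\|_{\L(H^s;H^{s_0})}$ then equals $\|H\|_{n_2\to n}$.
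The difficulty is that $f$ depends on $(n,n_2)$, whereas Lemma~\ref{LEM:RT} only takes an $\l^\infty$ bound on $f$.
To handle this, we move part of the exponential decay into the tensor.
Since $\|f\|_{\Hs^\be_{t'}}\les \|f\|_{L^{1/\be}_{t'}}$ by Sobolev (Remark~\ref{REM:RT}), we use
\[
e^{-x} \les x^{-a}\quad (x>0),
\]
applied to $x = (t-t')\jb{n}^2 + (t'-r)\jb{n_2}^2 \ge |t-r|\min(\jb n,\jb{n_2})^2$.
Equivalently, we write $f = \jb{\max}^{?}$-weighted pieces.
More precisely, we bound the $L^{1/\be}$ norm of $f$ on $[r,t]$ by $|t-r|^{\be}\min(1, |t-r|\jb{n}^2\wedge\ldots)$.
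The cleanest route is to estimate
\[
\|f_{nn_2}\|_{L^{1/\be}_{t'}}\les |t-r|^{(1-a)\be}\,\jb{n}^{-2a\be}\ \text{or}\ |t-r|^{(1-a)\be}\jb{n_2}^{-2a\be}.
\]
This follows from $\int_r^t e^{-(t-t')\jb n^2/\be}dt'\les \min(|t-r|,\jb n^{-2})\le |t-r|^{1-a}\jb n^{-2a}$, and similarly with $n_2$ in place of $n$.
We choose whichever of $\jb n$ and $\jb{n_2}$ is larger, splitting into the regions $|n|\ge|n_2|$ and $|n|<|n_2|$.
In each region we factor $f = \jb{\cdot}^{-2a\be}\tilde f$ with $\|\tilde f\|_{\l^\infty\Hs^\be}\les |t-r|^{(1-a)\be}$.
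The power $\jb{\cdot}^{-2a\be}$ is then absorbed into $h$, and Lemma~\ref{LEM:RT} is applied to each piece separately.

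Lemma~\ref{LEM:RT} requires three tensor norms: $\|h\|_{\l^2}$, $\|h\|_{n_1n_2\to n}$, and $\|h\|_{n_2\to n n_1}$.
For the Hilbert–Schmidt norm, take the region $|n|\ge|n_2|$, so the weight is $\jb n^{s_0-2a\be}\jb{n_2}^{-s}\phi_{n_1}$.
Summing over $n_2$ with $n$ fixed by $n_1$, we need finiteness of
\[
\sum_{n_1,n_2}\jb{n_{12}}^{2(s_0-2a\be)}\jb{n_2}^{-2s}\jb{n_1}^{-2\s}\,\jb{n_1}^{2\s}|\phi_{n_1}|^2 .
\]
This holds once $\sup_{n_1}\sum_{n_2}\jb{n_{12}}^{2(s_0-2a\be)}\jb{n_2}^{-2s}\jb{n_1}^{-2\s}<\infty$.
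That is a standard discrete convolution condition, but it needs roughly $s_0-2a\be-s<-1/2$, which may be too strong.
For that reason we use the operator norms instead and take $\ta$ small.

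For $\|h\|_{n_2\to nn_1}$: the delta constraint makes this a supremum over $n_2$ of $\big(\sum_{n_1}|\text{weight}|^2\big)^{1/2}$.
In the region $|n|\ge|n_2|$ it reads
\[
\sup_{n_2}\Big(\sum_{n_1}\jb{n_{12}}^{2(s_0-2a\be)}\jb{n_2}^{-2s}|\phi_{n_1}|^2\Big)^{1/2}\le \sup_{n_1,n_2}\jb{n_{12}}^{s_0-2a\be}\jb{n_2}^{-s}\jb{n_1}^{-\s}\,\|\phi\|_{\HS(L^2;H^\s)}.
\]
For $\|h\|_{n_1n_2\to n}$ we get the same supremum, with the sum taken over $(n_1,n_2)$ on a line, which is again bounded by a sup times $\|\phi\|_{\HS}$.
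So everything reduces to bounding
\[
\sup_{n_1,n_2}\jb{n_{12}}^{s_0}\jb{n_2}^{-s}\jb{n_1}^{-\s}\,\max(\jb{n_{12}},\jb{n_2})^{-2a\be}<\infty.
\]
We check this by cases on which frequency dominates, using $\jb{n_{12}}\les\jb{n_1}+\jb{n_2}$.
This is exactly where \eqref{YS0a} enters, with $s\wedge0$ and $\s\wedge0$ arising from the high–low configurations.
The strict inequalities leave $\eps$ room, which lets us pay the small $\ta$-power of the $\l^2$ norm after frequency localization.
Alternatively, we dyadically decompose and sum the geometric gains.
We expect this case analysis, together with keeping the $\ta$-loss summable, to be the main obstacle.

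This gives \eqref{YG1} with factor $p^{1/2}$ (since $k=1$).
Next, $\XX^1_{t,r}$ lies in the first chaos, and its operator norm is controlled by a supremum of chaos variables.
Together with the dyadic/Kolmogorov-type continuity argument for two-parameter increments, applied with $p$ large, \eqref{YG1} upgrades to \eqref{YG1a} for $\g<(1-a)\be$.
This also yields a continuous version.
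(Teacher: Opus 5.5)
This is essentially the paper's proof. It uses Lemma~\ref{LEM:RT} with $k=1$, and Sobolev's inequality to pass from $\Hs^\be_{t'}$ to $L^{1/\be}_{t'}$, which moves the factor $\max(\jb{n},\jb{n_2})^{-2a\be}\sim N_{\max}^{-2a\be}$ out of the time function. It then applies Cauchy--Schwarz to both $\|h\|_{n_1n_2\to n}$ and $\|h\|_{n_2\to nn_1}$, runs the three-case analysis ($N\sim N_1\ges N_2$, $N\sim N_2\gg N_1$, $N_1\sim N_2\gg N$) that yields \eqref{YS0a}, and finishes with a Garsia--Rodemich--Rumsey argument for \eqref{YG1a}.

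Three points need correcting.

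\begin{itemize}
\item The dyadic localization to the blocks $E_{\Nb_2}$ is mandatory, not an alternative. The unlocalized tensor typically has $\|h\|_{\l^2}=\infty$, and no small $\ta>0$ repairs the factor $\|h\|_{\l^2}^{\ta}$, so ``use the operator norms instead'' does not work.
\item Consequently, finiteness of the supremum you wrote is not enough. You must show that $N_{\max}^{\frac12\ta-2a\be}N^{s_0}N_1^{-\s}N_2^{-s}$ is summable over $\D_2$, and this is exactly where the strict inequalities in \eqref{YS0a} are used.
\item The passage from \eqref{YG1} to \eqref{YG1a} also needs Chen's relation $\wt\updl\XX^1=0$, that is, $\XX^1_{t,r}=\XX^1_{t,u}S_{u-r}+S_{t-u}\XX^1_{u,r}$. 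A pointwise-in-$(t,r)$ moment bound alone does not give it.
\end{itemize}
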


\begin{proof} %[Proof of Proposition \ref{PROP:drive2}]
We first note that the bound \eqref{YG1a} follows from \eqref{YG1} and
the Garsia-Rodemich-Rumsey inequality
(see, for example, \cite[Lemma 2.2]{GKOT})
by arguing as in the proof of 
\cite[Lemma 2.3]{GKOT}.
Hence, we focus on proving 
\eqref{YG1} in the following.

%We first note that the bound \eqref{YG1a} follows from \eqref{YG1} and
%the Garsia-Rodemich-Rumsey inequality
%(\cite[Lemma 3.8]{GT10}; see also  \cite[Lemma 2.2]{GKOT}) with \eqref{chen3}.
%Hence, we focus on proving 
%\eqref{YG1} in the following.

Fix $\frac 12 \le \be < 1$
and $t > r \ge 0$.
From \eqref{stoconv1} with \eqref{I1}, we have 
\begin{align}
\Ft_x\big(\jb{\nb}^{s_0}\XX^1_{t,r} \jb{\nb}^{-s}f \big)(n) 
=  \sum_{n_2\in \Z} \ft f (n_2) I_1[ \hf^1_{nn_2}(n_1) \ff^1_{nn_2}(t') ] , 
\label{YG2}
\end{align}

\noi
where $\hf^1_{nn_2}(n_1) $ and $\ff^1_{nn_2}$ are defined by 
\begin{align}
\begin{split}
\hf^1_{n n_2}(n_1) & = 
 \ind_{n=n_{12}}
\cdot
\frac{\jb{n}^{s_0}}{\jb{n_2}^{s}} \phi_{n_1} , \\
\ff^1_{nn_2} (t')  &= 
\ff_{nn_2}^{1, t, r} (t')= 
 \ind_{[r, t]}(t')\cdot 
 e^{-(t-t') \jb{n}^2}e^{-(t' -r) \jb{n_2}^2}.
\end{split}
\label{YG3}
\end{align}

\noi
Given a dyadic triple $\Nb_2= (N, N_1, N_2) \in (2^{\Z_{\ge 0}})^3$, we set 
\begin{align}
\begin{split}
\hf^{1, \Nb_2}_{n n_2}(n_1) & = 
\hf^{1, N, N_1, N_2}_{n n_2}(n_1)
= \ind_{E_{\Nb_2}}
\cdot 
 \hf^1_{n n_2}(n_1), \\
\ff^{1, \Nb_2}_{nn_2} (t')  &= 
\ff^{1, N, N_1, N_2}_{nn_2} (t')  = 
\ind_{E_{\Nb_2}}
\cdot 
\ff^1_{nn_2} (t'),  
\end{split}
\label{YG3a}
\end{align}

\noi
where $E_{\Nb_2}$ is defined by 
\begin{align*}
\begin{split}
E_{\Nb_2} = \big\{(n, n_1, n_2) \in \Z^3:
\ &  n = n_1 + n_2, 
\\
& 
 |n|\sim N, \, |n_j|\sim N_j, \, j = 1, 2\big\}.
\end{split}
%\label{EN1}
\end{align*}

%Given dyadic numbers $N, N_1, N_2\ge 1$, 
%we use $N_{\max}$, $N_{\med}$, and $N_{\min}$
%to denote their decreasing rearrangement:
%\begin{align}
%N_{\max} \ge N_{\med} \ge N_{\min}.
%\label{ord1}
%\end{align}
%

\noi
On $E_{\Nb_2}$, we have
\begin{align}
\max(N, N_1, N_2) \sim \med (N, N_1, N_2), 
\label{EN1a}
\end{align}

\noi
where the right-hand side denotes the second largest
among $N$, $N_1$, and $N_2$.
We define  $\D_2$ 
by setting
\begin{align}
\D_2 = \big\{
\Nb_2 =  (N, N_1, N_2)\in (2^{\Z_{\ge 0}})^3: \, \text{$\Nb_2$ satisfies \eqref{EN1a}}
\big\}.
\label{EN1b}
\end{align}

Given any $\ta > 0$, 
it follows from \eqref{YG2}, \eqref{YG3}, \eqref{YG3a}, 
and the random tensor estimate (Lemma~\ref{LEM:RT})
with \eqref{phi1a}
that 
\begin{align}
\begin{split}
 \big\| &  \| \XX^1_{t,r} \|_{\LOP(H^s;H^{s_0})} \big\|_{L^p(\Om)}
 = \big\| \| \jb{\nb}^{s_0}\XX^1_{t,r}\jb{\nb}^{-s} \|_{\LOP(L^2;L^2)} \big\|_{L^p(\Om)}\\
& = \big\| \| I_1[ \hf^1_{nn_2}(n_1) \ff^1_{nn_2}(t')  ] \|_{\l^2_{n_2} \to \l^2_n} \big\|_{L^p(\Om)}\\
& \le \sum_{\Nb_2 \in \D_2}
\big\| \| 
I_1[ \hf^{1, \Nb_2}_{nn_2}(n_1) \ff^{1, \Nb_2}_{nn_2}(t')] \|_{\l^2_{n_2} \to \l^2_n} \big\|_{L^p(\Om)}\\
& \les 
p^\frac 12 
\sum_{\Nb_2\in \D_2}
\frac{N_{\max}^{\frac 12 \ta} N^{\ta s_0}}
{N_1^{ \ta\s} N_2^{\ta s}}
\|\phi\|_{\HS(L^2; H^\s)}^\ta
 \|  \ff^{1, \Nb_2}_{nn_2} (t')\|_{\l^\infty_{nn_2} \Hs_{t'}^\be}\\
& \hphantom{XXXXX}\times 
 \max\Big( \|  \hf^{1, \Nb_2}_{nn_2}(n_1)\|_{n_1n_2 \to n} , 
 \|   \hf^{\1, \Nb_2}_{nn_2}(n_1) \|_{n_2 \to n n_1} \Big)^{1-\ta}
\end{split}
\label{YG3b}
\end{align}

\noi
for any finite $p \ge 1$, 
where 
 $N_{\max} = \max(N, N_1, N_2)$ and 
 $\Hs^\be_{t'}(\R_+)$ is as in \eqref{BM0}.

From \eqref{YG3a}, \eqref{YG3}, and 
Cauchy-Schwarz's inequality with \eqref{phi1a}, we have
\begin{align}
\begin{split}
\|  \hf^{1, \Nb_2}_{nn_2}(n_1)\|_{n_1n_2 \to n} 
&\sim 
\frac{N^{s_0}}{N_2^s}
\sup_{\|f\|_{\l^2_{n_1 n_2}}
= \|g\|_{\l^2_{n}}  = 1}
\bigg|
\sum_{n, n_1, n_2 \in \Z}
\ind_{E_{\Nb_2}}
\cdot 
\phi_{n_1} f_{n_1n_2} g_{n}\bigg|\\
&\les
\frac{N^{s_0}}{N_1^\s N_2^s}
\|\phi\|_{\HS(L^2; H^\s)}.
\end{split}
\label{YG3x}
\end{align}

\noi
A similar computation yields
\begin{align}
\begin{split}
\|  \hf^{1, \Nb_2}_{nn_2}(n_1)\|_{n_2 \to n n_1} 
&\les
\frac{N^{s_0}}{N_1^\s N_2^s}
\|\phi\|_{\HS(L^2; H^\s)}.
\end{split}
\label{YG3y}
\end{align}

Next,  we  estimate the 
$\Hs^\be_{t'}$-norm of 
$ \ff_{nn_2}^{1, \Nb_2}(t')$
appearing in \eqref{YG3b}.
From  \eqref{BM0} and Sobolev's inequality, we have  
\begin{align}
 \|  \ff^1_{nn_2} (t')\|_{\l^\infty_{nn_2} \Hs_{t'}^\be}
=  
\|  \ff^1_{nn_2} (t')\|_{\l^\infty_{nn_2} \dot H_{t'}^{\frac 12 - \be}}
\les 
\|  \ff^1_{nn_2} (t')\|_{\l^\infty_{nn_2}  L^\frac 1\be_{t'} }.
\label{YS1}
\end{align}

\noi
Before proceeding further, recall 
the beta function  $B(z_1, z_2)$ defined by 
\begin{align}
B(z_1, z_2) = \int_0^1 (1-t)^{z_1 - 1} t^{z_2-1} dt, 
\label{YS2}
\end{align}

\noi
which is finite for $\Re z_1, \Re z_2 > 0$.
Then, 
it follows from \eqref{YG3}, 
the boundedness of $\tau^\ta e^{- \tau} \le C_\ta$ 
for any $\tau \ge 0$
(given $\ta \ge  0$), 
and a change of variables 
with \eqref{YS2}
 that 
\begin{align}
\begin{split}
\|  \ff^1_{nn_2} (t')\|_{ L^\frac 1\be_{t'} }^\frac 1 \be 
& = J^1_{t, r}(n, n_2)\\
: \! & = \int_r^t 
 e^{-\be^{-1}(t-t') \jb{n}^2}e^{-\be^{-1}(t' -r) \jb{n_2}^2}
 dt' \\
& \les_{\be, a_1, a_2} 
\jb{n}^{-2a_1}\jb{n_2}^{-2a_2}
\int_r^t (t - t')^{-a_1} 
(t' - r)^{-a_2}  dt'\\
& = (t-r)^{1-a_1 - a_2}\jb{n}^{-2a_1}\jb{n_2}^{-2a_2}
B(1- a_1 , 1- a_2)\\
& \sim_{a_1, a_2}  (t-r)^{1-a_1 - a_2}\jb{n}^{-2a_1}\jb{n_2}^{-2a_2}, 
\end{split}
\label{YS3}
\end{align}

\noi
\noi
uniformly in $n, n_2 \in \Z$
and  $t > r\ge 0$, 
provided that $0 \le a_1, a_2 < 1$.
Hence, we have
\begin{align}
\begin{split}
\|  \ff^{1, \Nb_2}_{nn_2} (t')\|_{ L^\frac 1\be_{t'} }
& \les (t-r)^{(1-a)\be}N_{\max}^{-2a\be}, 
\end{split}
\label{YS3a}
\end{align}

\noi
\noi
uniformly in $n, n_2 \in \Z$, dyadic $N, N_1, N_2 \ge 1$, 
and $t \ge r\ge 0$,
provided that $0 \le a < 1$.

Therefore, putting
\eqref{YG3b}, \eqref{YG3x}, \eqref{YG3y}, 
\eqref{YS1}, 
and \eqref{YS3a} together with \eqref{EN1b}, we obtain
\begin{align}
\begin{split}
  \big\|    \| &  \XX^1_{t,r} \|_{\LOP(H^s;H^{s_0})} \big\|_{L^p(\Om)}\\
& \les 
p^\frac 12 
\|\phi\|_{\HS(L^2; H^\s)}
 |t- r|^{(1-a) \be} %\frac 12 + \eps}
\sum_{\Nb_2\in \D_2}
N_{\max}^{\frac 12 \ta - 2a\be }
\frac{N^{s_0}}{N_1^{\s} N_2^{s}}\\
& \les 
p^\frac 12 
\|\phi\|_{\HS(L^2; H^\s)}
 |t- r|^{(1- a) \be}
\end{split}
\label{YS4}
\end{align}

\noi
for any $0 \le a < 1$, 
provided that \eqref{YS0a}
holds 
and that $\ta, \eps > 0$ are sufficiently small.
Here, the second inequality in \eqref{YS4} follows
from separately considering the cases:
 (a)~$N \sim N_1 \ges N_2$, 
(b)~$N \sim N_2 \gg N_1$, 
and 
(c)~$N_1 \sim N_2 \gg N$
(also depending on the sign of the exponent of $N_{\min}$).
\end{proof}

\subsection{Proof of Theorem \ref{THM:1}}
\label{SUBSEC:Y3}

We conclude this section by presenting a proof of Theorem~\ref{THM:1}.
More precisely, we prove that 
%under a suitable choice of parameters, 
the convolution YDE~\eqref{yo12}
is pathwise globally well-posed.
We note that, 
once we make a suitable choice of parameters, 
the rest follows from a standard pathwise well-posedness
argument for a YDE.
See, for example,  \cite [Subsection 3.3]{CGLLO1}.

%
%
%\noi
%By taking $s \to 0$, 
%we obtain 
%\begin{align}
%\s  >  -   2 \be + 1.
%\label{YS0a2}
%\end{align}
%
%
% 
%

\begin{proof}[Proof of Theorem \ref{THM:1}]

Given $\frac 12 < \be < 1$, 
let $\s \in \R$ satisfy \eqref{TH1}.
Let  $s, \eps > 0$ sufficiently small such that 
\begin{align}
 \s  - s > - 2 \be + 1 + 6\eps.
\label{YR0}
\end{align}

\noi
By taking $\eps > 0$ possibly smaller
 such that $\frac 12 + 3\eps < \be$, 
set 
$a = 1 - \frac1 {2\be} - \frac {3\eps} \be \in (0, 1)$
such that 
\begin{align}
(1-a) \be = \frac 12 + 3\eps.
\label{YY1}
\end{align}

\noi
Then, 
the conditions in  \eqref{YS0a} with $s_0 = s> 0$ and $\s <  0$
reduces to 
\eqref{YR0}, 
which is satisfied
by our choice of parameters.
Hence, 
it follows from Proposition \ref{PROP:drive2} with $s_0 = s$
that there exists $\Si \subset \Om$ with $\PP(\Si) = 1$ such that, 
for each $\o \in \Si$, we have 
\begin{align*}
\XX^1 
= \XX^1 (\o)
\in \bigcap_{j = 1}^\infty C^{\frac 12 +2\eps}_{2,j} \L(H^s(\T)).
%\label{YR2}
\end{align*}

\noi
In the following, we fix $\o \in \Si$
and often suppress dependence on $\o$.

Fix $0 < T \le 1$ (to be chosen later).
Define a map $\G$ on $\ft \CC_{T}^{\frac 12 + \eps} H^s(\T)$ by setting
\begin{align}
\begin{split}
\G(u)(t) 
& = S(t) u_0 + \I^{\XX^1}(u)(t)\\
& = S(t) u_0 + 
\big[(\Id - \ft \Ld \ft \updl)(\XX^1(u))\big]_{t, 0}.
\end{split}
\label{YR3}
\end{align}

\noi
See \eqref{yo9}.
From \eqref{Ho2} with \eqref{prod1}, 
we have 
\begin{align}
\begin{split}
\| \XX^1(u)  \|_{C_{2, T}^{\frac 12 +\eps} H^s_x}
& \le T^\eps \| \XX^1(u)  \|_{C_{2, T}^{\frac 12 +2\eps} H^s_x}
 \le T^\eps \|\XX^1\|_{C^{\frac 12 +2\eps}_{2, 1} \L(H^s)}
\| u \|_{L^\infty_T H^s_x}\\
& \les T^\eps  \|\XX^1\|_{C^{\frac 12 +2\eps}_{2, 1} \L(H^s)}
\| u \|_{\ft \CC_{T}^{\frac 12 +\eps}  H^s_x}.
\end{split}
\label{JL2}
\end{align}

\noi
By applying 
 the convolution sewing lemma (Lemma \ref{LEM:sew2})
with \eqref{Ho2}, we have 
\begin{align}
\begin{split}
 \|\ft \Ld \XX^1(\ft \updl u)\|_{C_{2, T}^{\frac 12 + \eps} H^s_x}
& \le T^{\frac 12 + 2\eps }
\|\ft \Ld \XX^1(\ft \updl u)\|_{C_{2, T}^{1+3\eps} H^{s}_x}\\
& \les
T^{\frac 12 + 2\eps }
\|\XX^1(\ft \updl u)\|_{C_{3, T}^{1+ 3\eps} H^{s}_x}\\
& \les
T^{\frac 12 + 2\eps }
\|\XX^1\|_{C_{2, 1}^{\frac 12 + 2\eps} \L(H^s)}
\| u \|_{\ft \CC_{T}^{\frac 12 + \eps}  H^s_x}.
\end{split}
\label{JL4}
\end{align}

\noi
From \eqref{YR3}, \eqref{JL2}, and \eqref{JL4}
with \eqref{Ho4}, we have
\begin{align}
\begin{split}
\| \G(u)  \|_{\ft \CC_{T}^{\frac 12 +\eps} H^s_x}
& \les \|u_0 \|_{H^s}+  T^\eps  \|\XX^1\|_{C^{\frac 12 +2\eps}_{2, 1} \L(H^s)}
\| u \|_{\ft \CC_{T}^{\frac 12 +\eps}  H^s_x}.
\end{split}
\label{JL4a}
\end{align}

\noi
A similar computation yields the following difference estimate:
\begin{align}
\begin{split}
\| \G(u) - \G(v)  \|_{\ft \CC_{T}^{\frac 12 +\eps} H^s_x}
& \les   T^\eps  \|\XX^1\|_{C^{\frac 12 +2\eps}_{2, 1} \L(H^s)}
\| u -v\|_{\ft \CC_{T}^{\frac 12 +\eps}  H^s_x}.
\end{split}
\label{JL4b}
\end{align}

\noi
Hence, 
by 
choosing 
\begin{align}
T = T\big(  \|\XX^1(\o) \|_{C^{\frac 12 +2\eps}_{2, 1} \L(H^s)}\big) >0
\label{JL5}
\end{align}

\noi
sufficiently small, 
it follows from \eqref{JL4a} and \eqref{JL4b} that 
$\G$ is a contraction on 
$ \ft \CC_{T}^{\frac 12 + \eps} H^s(\T)$. 
Therefore, 
by the Banach fixed point theorem, 
there exists a unique fixed point $u \in \ft \CC_{T}^{\frac 12 + \eps} H^s(\T)$
such that 
\begin{align*}
u (t)=  \G(u)(t) = S(t) u_0 + \I^{\XX^1}(u)(t).
%\label{JL6}
\end{align*}

\noi
Namely, 
$u$ is a unique solution to the convolution YDE \eqref{yo12}.

Thanks to the linearity 
of 
  $\I^{\XX^1}(u)$ 
in $u$, 
the local existence time $T$ in \eqref{JL5}
is  independent of  the initial data $u_0$.
Given a target time $T_* \gg1$, we now choose 
a (smaller) local existence time 
\begin{align*}
T = T\big(  \|\XX^1(\o) \|_{C^{\frac 12 +2\eps}_{2, T_*} \L(H^s)}\big) >0
\qquad \text{such that}\qquad 
T^\eps  \|\XX^1\|_{C^{\frac 12 +2\eps}_{2, T_*} \L(H^s)} \ll 1.
\end{align*}

\noi
%as in \eqref{JL5}.
Then, by iteratively applying 
 the contraction argument 
on each time interval $[jT, (j+1)T]\cap [0, T_*]$, $j = 0,1,  \dots, \big[\frac {T_*}T\big] $, 
where $[x]$ denotes the integer part of $x \in \R$, 
we can extend the solution $u$ to \eqref{yo12} 
onto the entire interval $[0, T_*]$.
This proves 
pathwise global well-posedness.
\end{proof}

\begin{remark}\label{REM:GT1}\rm
In Section \ref{SEC:rough}, 
we treat the rough case ($\be = \frac 12$) by 
considering the third order expansion.
We point out that, 
even in the fractional-in-time case ($\frac 12 < \be < 1$), 
we can improve the regularity restrictions \eqref{TH1}
by working with the third order expansion, 
where we only impose the temporal regularity of $\frac 14 + 2\eps$
on the relevant drivers.
Then, 
instead of \eqref{YY1}, we have 
\begin{align*}
(1-a) \be = \frac 14 + 3\eps, 
\end{align*}

\noi
which yields 
$ \s  - s > - 2 \be + \frac 12 + 6\eps$, 
leading to an improvement of $\frac12$ for the range of $\s$
(regarding boundedness of the first order driver $\XX^1$).
See also Remark \ref{REM:GT2}\,(ii).
We, however,  do not pursue this issue, since our purpose here is to compare our approach
with that in \cite[Section~4]{GT10}.

\end{remark}

\section{Rough case}
\label{SEC:rough}

In this section, we consider the rough case (= the white-in-time case with $\be = \frac 12$).
In this case, the first order driver $\XX^1$ in \eqref{sto1x}
is not sufficient 
and thus we need to augment it by introducing higher order drivers $\XX^j$, $j \ge 2$, 
via the relation:
%We introduce the higher order drivers $\XX^j$, $j \ge 2$,  by the relation:
\begin{align}
\XX^j_{t, r} = \YY_{t, r} \circ \XX^{j-1}_{\bul, r}, 
\label{high1a}
\end{align}

\noi
where $\YY$ is as in \eqref{Y1} and $\bul$ denotes the variable of integration.
Namely, we have 
\begin{align}
\XX^{j}_{t, r}(f) = 
\int_r^t S_{t-t'} \big[ \XX^{j - 1}_{t', r} (f)\phi dW^\be_{t'}\big].
\label{high1}
\end{align}

\noi
Then, Chen's relation in the current convolution setting reads as
\begin{align}
\wt \updl \XX^n = \sum_{j = 1}^{n-1}\XX^j \XX^{n - j}, 
\label{chen1}
\end{align}

\noi
where $\wt \updl $ is as in \eqref{dl4}; see \cite[p.\,40]{GT10}.

By following \cite[Section 5]{GT10}, 
we  consider a third order rough path
$\cj \XX = (\XX^1, \XX^2, \XX^3)$
and 
make sense of  the stochastic convolution $\Psi(u)$
as a convolution rough integral
$\I^{\cj \XX}(u)$.
This  allows us to rewrite 
the Duhamel formulation~\eqref{NLH2}
as the following 
 convolution RDE:
\begin{align}
u(t) = S(t) u_0 + \I^{\cj \XX}(u)(t).
\label{ro1a}
\end{align}

\noi
In Subsection \ref{SUBSEC:R1}, 
we briefly go over the construction of a convolution rough integral
for readers' convenience.
In Subsection \ref{SUBSEC:R2},  
we then use the random tensor estimate (Lemma~\ref{LEM:RT}) 
to study  regularity properties
of the second and third  order drivers $\XX^2$ and $\XX^3$
(for the range $\frac 12 \le \be < 1$).
In Subsection~\ref{SUBSEC:R3}, 
we prove global well-posedness
of the convolution RDE \eqref{ro1a}, 
thus establishing  Theorem~\ref{THM:2}.

\subsection{Convolution rough integral}
\label{SUBSEC:R1}

Following the presentation in \cite[Section 5]{GT10}, 
we consider a second order controlled path
in the current convolution setting.

\begin{definition}\label{DEF:high} \rm
Let $V$ be a separable Banach space and 
$\{S_t\}_{t \in \R_+} \subset \L(V)$ be a semigroup
of bounded operators with $S_0 = \Id$.
Fix  $T > 0$.

\smallskip

\noi
(i) 
Given a pair $(\XX^1, \XX^2)
\in C_{2, T}^\g \L(V)\times  C_{2, T}^{2\g} \L(V)$ of drivers
with $0 < \g < 1$, 
we say that $u \in \ft \CC_{T}^\al V$ for some $0 < \al < 1$ is controlled by 
$(\XX^1, \XX^2)$
if there exist
$u^1, u^2 \in \ft \CC^\al_{T}V$
and remainder terms $R^j \in C_{2, T}^{\al + (2-j)\g} V$, 
%for some $\g_0 > 0$, 
$j = 0, 1$, 
such that 
\begin{align}
\begin{split}
(\ft \updl u)_{t, r} & = \XX^1_{t, r} (u^1_r) + \XX^2_{t, r}(u^2_r) + R^0_{t, r}, \\
(\ft \updl u^1)_{t, r} & = \XX^1_{t, r} (u^2_r)  + R^1_{t, r}
\end{split}
\label{control1}
\end{align}

\noi
for $0 \le r < t \le T$, 
where $\ft \updl$ is as in \eqref{dl3}
with the given semigroup $\{S_t\}_{t \in \R_+}$.
We refer to $u^j$, $j = 1, 2$, 
as the $j$th Gubinelli derivative of $u$
(with respect to the driver $ (\XX^1, \XX^2)$).
We denote the space of such controlled paths 
$(u,u^1, u^2)$ by 
$\D_{\XX^1, \XX^2, T}^{\al, \g}(V)
= \D_{\XX^1, \XX^2}^{\al, \g}([0, T]; V)$
and endow it with the following norm:
\begin{align}
\begin{split}
\| (u, u^1, u^2) \|_{\D^{\al, \g}_{\XX^1, \XX^2,  T}(V)} 
& = \|u(0) \|_{V} + \|u^1\|_{\ft \CC_T^\al V}+ \|u^2\|_{\ft \CC_T^\al V}\\
& \quad 
 + \| R^0 \|_{C^{\al + 2\g}_{2, T}V}
+ \| R^1 \|_{C^{\al + \g}_{2, T}V},
\end{split}
\label{control2}
\end{align}

\noi
where $R^0 $ and $R^1$ are defined by  the relation \eqref{control1}.

\smallskip

\noi
(ii)  Let $0 < \g \le \frac 13$.
We say that  a triplet  $(\XX^1, \XX^2, \XX^3)
\in \prod_{j = 1}^3 C_{2, T}^{j\g} \L(V)$
is a $\g$-H\"older convolution rough path
if it satisfies the Chen's relation
in the current convolution setting:
\begin{align}
\wt \updl \XX^1 = 0, \qquad 
\wt \updl \XX^2 = \XX^1 \XX^1, \qquad 
\wt \updl \XX^3 = \XX^1 \XX^2
+ \XX^2 \XX^1, 
\label{chen2}
\end{align}

\noi
where $\wt \updl $ is as in \eqref{dl4}.
See \eqref{chen1}.

\end{definition}

In our setting, we take $S_t = e^{t(\dx^2- 1)}$
and $V = H^s(\T)$.
As in the Young case discussed in Section \ref{SEC:Young}, 
our goal is to 
provide  a meaning to the stochastic convolution $\Psi(u)$ in 
\eqref{psi1} whose convolution increment is formally 
given by
\begin{align}
\big(\ft \updl \Psi(u)\big)_{t, r} =  \YY_{t, r} (u_\bul), 
\label{ro2}
\end{align}

\noi
where $\YY$ is as in \eqref{Y1}
and $\bul$ denotes the variable of integration.

For $j = 2, 3$, let $\XX^j$ be as in \eqref{high1a}, 
which satisfies
Chen's relation \eqref{chen2}.
{\it Suppose} that 
\begin{align}
\cj \XX (\o)= (\XX^1(\o), \XX^2(\o), \XX^3(\o))
\in \prod_{j = 1}^3 C_{2, T}^{j\g} \L(H^s(\T))
\label{ro2a}
\end{align}

\noi
for some $s \in \R$, $0 < \g \le \frac 13$, 
and $\o \in \Om$.
In the following discussion, we suppress the $\o$-dependence.
In view of \eqref{ro1a}, 
we assume that $u
\in \ft \CC_{T}^\al H^s(\T)$ for some $0 < \al < 1$  is controlled
by $(\XX^1, \XX^2)$ with its Gubinelli derivatives\footnote{In fact, \eqref{ro3}
follows as a result of a contraction argument; see \eqref{ZZ13}.} 
\begin{align}
u^1 = u^2 = u\in \ft \CC_{T}^\al H^s(\T).
\label{ro3}
\end{align}

\noi
Namely, we have
\begin{align*}
\begin{split}
(\ft \updl u)_{t, r} & = \XX^1_{t, r} (u_r) + \XX^2_{t, r}(u_r) + R^0_{t, r}, \\
(\ft \updl u)_{t, r} & = \XX^1_{t, r} (u_r)  + R^1_{t, r}, 
\end{split}
%\label{control3}
\end{align*}

\noi
where 
\begin{align}
R^j \in C_{2, T}^{\al + (2-j)\g} H^s(\T), \quad j = 0, 1.
\label{ro3a}
\end{align}

In following, we recall the discussion  from 
\cite[Section 5]{GT10} for readers' convenience.
For computational clarity, however, we  use $u^1$ and $u^2$ to
denote the Gubinelli derivatives of $u$.
By replacing $\bul$ in \eqref{ro2} by the left endpoint $r$, we have 
\begin{align}
\YY_{t, r} (u_\bul)
=   \XX^1_{t, r} (u_r) 
+ \YY_{t, r} (\ft \updl u_{\bul, r}).
\label{ro4}
\end{align}

\noi
See \eqref{yo3}.
Then, by applying \eqref{control1} and \eqref{high1a} to \eqref{ro4}, 
we have 
\begin{align}
\YY_{t, r} (u_\bul)
=   \XX^1_{t, r} (u_r) 
+  \XX^2_{t, r} (u_r^1) 
+  \XX^3_{t, r} (u_r^2) 
+ Q_{t, r} , 
\label{ro5}
\end{align}

\noi
where $Q = \YY(R^0)$.
As in the Young case, our goal is to find {\it one} error term $Q$
with  sufficient regularity, 
which will allow us to define 
the convolution rough integral 
$\I^{\cj \XX}(u)  = \I^{\XX^1, \XX^2, \XX^3}(u)$
in the pathwise manner
as the unique limit of Riemann-Stieltjes type sums;
see \eqref{ro10} below.

By applying 
the convolution coboundary operator $\ft \updl$ in \eqref{dl3}
to \eqref{ro5}, 
any error term $Q$ (if it exists) satisfies
\begin{align}
\ft \updl Q 
=  - \ft \updl \big[\XX^1 (u) 
+    \XX^2 (u^1) 
+  \XX^3 (u^2) \big], 
\label{ro5a}
\end{align}

\noi
where we used the short-hand notation \eqref{prod1}.
By applying the product rule \eqref{prod2}
(where $u^0:= u$)
with \eqref{chen2} and \eqref{control1}, 
we obtain
\begin{align}
\begin{split}
\ft \updl Q 
& = \sum_{j = 1}^3\Big( - (\wt \updl \XX^j) (u^{j-1}) + \XX^j (\ft \updl u^{j-1})\Big)\\
& = \XX^1\big(\ft \updl u - \XX^1 (u^1) - \XX^2 (u^2)\big)
+ \XX^2\big(\ft \updl u^1 - \XX^1 (u^2)\big)
+ \XX^3(\ft \updl u^2)
\\
& = \XX^1(R^0) + 
\XX^2(R^1)
+ \XX^3(\ft \updl u^2).
\end{split}
\label{ro6}
\end{align}

\noi
See \cite[(90)]{GT10}.
From 
\eqref{ro6}, \eqref{ro2a}, \eqref{ro3}, \eqref{ro3a}
with 
 \eqref{Ho2}, we obtain that 
\begin{align*}
\ft \updl Q  \in C_{3, T}^{\al + 3\g}H^s(\T).
\end{align*}

\noi
Hence, if $\al + 3\g > 1$, 
then we can apply the convolution sewing lemma (Lemma \ref{LEM:sew2})
to {\it define} an error term $Q$ by the relation:
\begin{align}
Q =   - \ft \Ld \ft \updl 
\big[\XX^1 (u) 
+   \XX^2 (u^1) 
+   \XX^3 (u^2) \big]
\in C_{2, T}^{\al + 3\g}H^s(\T), 
\label{ro7}
\end{align}

\noi
where $\ft \Ld$ denotes the convolution sewing map.
Therefore, from \eqref{ro2}, \eqref{ro5}, and  \eqref{ro7}, 
we  can make sense of  the stochastic convolution $\Psi(u)$
as the convolution rough integral $\I^{\cj \XX}(u)$
of~$u$ (with respect to the driver $\cj \XX= (\XX^1, \XX^2, \XX^3)$):
\begin{align}
\Psi(u) = \I^{\cj \XX}(u)
\label{ro8}
\end{align}

\noi
where $\I^{\cj \XX}(u)(0) = 0$
and its convolution increment is given by 
\begin{align}
\ft \updl \I^{\cj \XX}(u)
= 
(\Id - \ft \Ld \ft \updl) \big[\XX^1 (u) 
+   \XX^2 (u^1) 
+   \XX^3 (u^2) \big]. 
\label{ro9}
\end{align}

\noi
In view of \eqref{ro7} with $\al + 3\g > 1$, 
the convolution rough integral 
$\I^{\cj \XX}(u)$ is given by the unique limit of
Riemann-Stieltjes type sums:
\begin{align}
\begin{split}
\I^{\cj \XX}(u)(t) 
& =  \lim_{|\Pi([0,t])|\to 0} 
\sum_{j=0}^{n-1} S_{t- t_j} 
\big[\XX^1_{t_j,t_{j+1}} (u_{t_{j+1}}) \\
& \hphantom{XXXXXXXXXX}
+    \XX^2_{t_j,t_{j+1}} (u^1_{t_{j+1}}) 
+   \XX^3_{t_j,t_{j+1}} (u^2_{t_{j+1}}) \big], 
\end{split}
\label{ro10}
\end{align}

\noi
where the limit is understood in the sense of Lemma \ref{LEM:sew2}\,(ii).
Here, we used \eqref{yo11}
which holds 
for  $\al + 3\g > 1$ in the current setting.

In view of \eqref{ro8}, the 
Duhamel formulation \eqref{NLH2} reduces to 
the  convolution RDE \eqref{ro1a}.
As in the Young case, 
the main task  is then to verify that \eqref{ro2a} holds,  
almost surely, 
under suitable assumptions on the relevant parameters.

\subsection{Higher order drivers}
\label{SUBSEC:R2}

In this subsection, we study regularity properties
of the second and third order drivers $\XX^j$, $j = 2, 3$, 
for $\frac 12 \le \be < 1$.
From \eqref{high1a} with \eqref{sto1x} and \eqref{Y1}, we 
formally have 
\begin{align}
\begin{split}
\XX^2_{t, r}(f)
&  =  \int_r^t
S_{t- t_1} \bigg[\int_r^{t_1} S_{t_1 - t_2} 
 \big[ (S_{t_2 - r} f)
\phi dW^\be_{t_2} \big]
\phi dW^\be_{t_1}\bigg]
\end{split}
\label{high2}
\end{align}

\noi
and 
\begin{align}
\begin{split}
\XX^3_{t, r}(f)
&  =  \int_r^t
S_{t- t_1} \bigg[\int_r^{t_1} S_{t_1 - t_2} 
\Big[\int_r^{t_2} S_{t_2 - t_3} \\
& \hphantom{XXXXXX}
 %\times 
 \big[ (S_{t_3 - r} f)
\phi dW^\be_{t_3} \big]
\phi dW^\be_{t_2} \Big]
\phi dW^\be_{t_1}\bigg]
\end{split}
\label{high3}
\end{align}

\noi
for $t > r \ge 0$, 
 where  $f$ is a function on $\T$.
Here,  the iterated stochastic integrals in \eqref{high2} and \eqref{high3}
are to be interpreted as  multiple Wiener  integrals
introduced in Subsection~\ref{SUBSEC:FBM}.
See \eqref{RGG1} and \eqref{ZRGG1}.

In view of \eqref{high2} and \eqref{high3}  with  \eqref{W1} and~\eqref{phi1}, 
we can write $\XX^2$ and $\XX^3$ as 
 \begin{align}
\begin{split}
 \XX^2_{t, r}(f)
&
 = \sum_{n\in \Z} e_n 
 \int_r^t
  \int_r^{t_1}
  \sum_{n_1, n_2, n_3  \in \Z} 
\ind_{n = n_{123}} \\
& \hphantom{XXXXXXX}
\times  e^{-(t-t_1)\jb{n}^2}
e^{-(t_1-t_2)\jb{n_{23}}^2}
e^{- (t_2-r)\jb{n_3}^2}
 \ft f(n_3) \\
& \hphantom{XXXXXXX} 
\times 
\prod_{j = 1}^2 \phi_{n_j}     dB_{n_j}(t_j) 
\end{split}
\label{high2a}
\end{align}

\noi
and 
 \begin{align}
\begin{split}
 \XX^3_{t, r}(f)
&
 = \sum_{n\in \Z} e_n 
 \int_r^t
\int_r^{t_1}
\int_r^{t_2}
  \sum_{n_1, \dots,  n_4  \in \Z} 
\ind_{n = n_{1234}} \\
& \hphantom{XXXX}
\times  e^{-(t-t_1)\jb{n}^2}
e^{-(t_1-t_2)\jb{n_{234}}^2}
e^{-(t_2-t_3)\jb{n_{34}}^2}
e^{-(t_3-r)\jb{n_4}^2}
 \ft f(n_4) \\
& \hphantom{XXXX} 
\times 
\prod_{j = 1}^3 \phi_{n_j}     dB_{n_j}(t_j) 
\end{split}
\label{high3a}
\end{align}

\noi
for $t > r \ge 0$,  
where
we used the short-hand notation \eqref{short1}.
Once again, these 
 iterated stochastic integrals in \eqref{high2a} and \eqref{high3a}
are to be interpreted as  multiple Wiener  integrals.

We  note that, 
the drivers $\XX^1$, $\XX^2$, and $\XX^3$
in \eqref{sto1x}, \eqref{high2}, and \eqref{high3}, respectively, 
satisfy Chen's relation 
\eqref{chen2}
since $\XX^j$'s generated by the recursive relation \eqref{high1} satisfy more general Chen's relation
\eqref{chen1}.

\medskip

\noi
$\bul$ {\bf Second order driver $\XX^2$ in \eqref{high2a}.}\rule[-3mm]{0pt}{0pt}\\
\indent
We first introduce some notations.
Given dyadic $N, N_1, N_2, N_3, N_{23} \ge1$, 
we use $\Nb_3$ to denote the following dyadic quintuple:
\begin{align}
\Nb_3 = (N, N_1, N_2, N_3, N_{23}) \in (2^{\Z_{\ge 0}})^5.
\label{XY1}
\end{align}

\noi
Given $\Nb_3 \in (2^{\Z_{\ge 0}})^5$, 
define  $ E_{\Nb_3}$ by 
\begin{align}
\begin{split}
E_{\Nb_3} = \big\{(n, n_1, n_2, n_3) \in \Z^4:
\ & n = n_{123}, \,  |n|\sim N, \\
&  |n_j|\sim N_j, \, j = 1, 2, 3, \ |n_{23}|\sim N_{23}\big\}.
\end{split}
\label{XY2}
\end{align}

\noi
On $E_{\Nb_3}$, we have
\begin{align}
\max( N, N_{23}, N_3) \sim N_{\max} := \max (N, N_1, N_2, N_3).
\label{XY3}
\end{align}

\noi
We define  $\D_3$ 
by setting
\begin{align}
\D_3 = \big\{
\Nb_3 \in (2^{\Z_{\ge 0}})^5: \, \text{$\Nb_3$ is  of the form \eqref{XY1}, satisfying \eqref{XY3}}
\big\}.
\label{XY3a}
\end{align}

\noi
We also denote by  $\A_3$ the class of functions
$\ab = (a_1, a_2, a_3)
:(2^{\Z_{\ge 0}})^5 \to [0, 1)^3$
of the form: 
\begin{align*}
 \ab(\Nb_3) = \big(a_1(\Nb_3), a_2(\Nb_3), a_3(\Nb_3)\big) \in [0, 1)^3
% \label{XY4} 
\end{align*}

\noi
for $\Nb_3 \in (2^{\Z_{\ge 0}})^5$, 
satisfying
\begin{align}
\sum_{j = 1}^3 a_j \le 2.
\label{XY5}
\end{align}

\begin{proposition}
\label{PROP:drive3}
Let  $\frac 12 \le \be < 1$
and $s, s_0, \s \in \R$.
Given $\phi \in \HS(L^2(\T); H^\s(\T))$
satisfying~\eqref{phi1}, 
let  $\XX^2$ be the second order driver in~\eqref{high2a}.
Given
small $\ta > 0$ and 
$\ab \in \A_3$, 
set 
\begin{align}
M^{(2)}_{\be, \s, s, s_0, \ta} (\ab(\Nb_3))
= 
\frac{N_{\max}^{\frac 12 \ta} N^{s_0-2a_1(\Nb_3)\be}}
{N_1^{\s} N_2^\s N_{23}^{2a_2(\Nb_3)\be }N_3^{s+2a_3(\Nb_3)\be}}
\label{XY6}
\end{align}

\noi
for $\Nb_3$ of the form \eqref{XY1}, 
where $N_{\max} = \max(N, N_1, N_2, N_3)$.
 Then, given any finite $p \ge 1$
 and $\ab \in \A_3$,  we have\footnote{Note that the right-hand side
 of \eqref{XY7} is not a priori finite.} 
\begin{align}
\begin{split}
 & \big\| \| \XX^2_{t,r}  \|_{\LOP(H^{s}; H^{s_0})} 
  \big\|_{L^p(\Om)} 
   \les_\ab    p  \| \phi \|_{\HS(L^2;H^\s)}^2  \\
& \hphantom{XXXXXX}\times 
\sum_{\Nb_3 \in \D_3}
%\sum_{\substack{N, N_1, N_2, N_3, N_{23} \ge1 \\\text{dyadic}\\\eqref{XY2}}}  
M^{(2)}_{\be, \s, s, s_0, \ta} (\ab(\Nb_3))
 (t-r)^{(2
-\sum_{j = 1}^3 a_j (\Nb_3))\be}
\end{split}
\label{XY7}
\end{align}

\noi
for any   $t>r\ge 0$. % uniformly in $\ab \in \A_3$.

Furthermore, suppose that the following holds
for a given choice of   $\ab \in \A_3$\textup{:}
\smallskip

\begin{itemize}
\item[(i)]
$M^{(2)}_{\be, \s, s, s_0, \ta} (\ab(\Nb_3))$ in \eqref{XY6} is summable\textup{:}
\begin{align}
\sum_{\Nb_3 \in \D_3}
M^{(2)}_{\be, \s, s, s_0, \ta} (\ab(\Nb_3)) < \infty, 
\label{XY8}
\end{align}

\smallskip

\item[(ii)] 
there exists $0 < \g < 1$ such that 
\begin{align}
\inf_{\Nb_3\in \D_3}\bigg(2
-\sum_{j = 1}^3 a_j (\Nb_3)\bigg)\be>  \g, 
\label{XY9}
\end{align}

\smallskip

\item [(iii)]
for the choice of $\g$ in (ii), the bound \eqref{YG1a} 
for the first order driver $\XX^1$ holds.

\end{itemize}

\smallskip

\noi
Then, 
 we have 
\begin{align}
\big\| \| \XX^2 \|_{C^{\g}_{2,T} \LOP(H^s;H^{s_0})} \big\|_{L^p(\Om)} \les_T p  \| \phi \|_{{\HS} (L^2; H^\s)}^2
\label{XY10}
\end{align}

\noi
for 
any finite $p \ge 1$ and $T > 0$.
In particular, 
there exists a version of $\XX^2$
such that 
\[\XX^2 \in C^{\g}_{2,T} \LOP(H^s(\T);H^{s_0}(\T)),\]

\noi
almost surely.

\end{proposition}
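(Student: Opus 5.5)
We follow the proof of Proposition \ref{PROP:drive2}, now with a second order multiple Wiener integral. From \eqref{high2a} we write
\[
\Ft_x\big(\jb{\nb}^{s_0}\XX^2_{t,r}\jb{\nb}^{-s}f\big)(n)
= \sum_{n_3\in\Z}\ft f(n_3)\, I_2\big[\hf^2_{nn_3}(n_1,n_2)\,\ff^2_{nn_3n_1n_2}(t_1,t_2)\big],
\]
where $\hf^2_{nn_3}(n_1,n_2) = \ind_{n=n_{123}}\jb{n}^{s_0}\jb{n_3}^{-s}\phi_{n_1}\phi_{n_2}$ and
\[
\ff^2(t_1,t_2) = \ind_{r<t_2<t_1<t}\, e^{-(t-t_1)\jb{n}^2}e^{-(t_1-t_2)\jb{n_{23}}^2}e^{-(t_2-r)\jb{n_3}^2}.
\]
Since $\ff^2$ depends on $n_{23}$, we view $\ff^2$ as a function of all frequencies; this is allowed since $f\in\l^\infty$ in Lemma \ref{LEM:RT}. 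Here $b=n_3$, $c=n$, and $A=\{1,2\}$. We localize dyadically via $E_{\Nb_3}$ in \eqref{XY2}, and note that \eqref{XY3} holds because $n=n_1+n_{23}$ and $n_{23}=n_2+n_3$. Summing over $\Nb_3\in\D_3$ by the triangle inequality and applying Lemma \ref{LEM:RT} with $k=2$ gives the factor $p$. It also gives $\|\hf^{2,\Nb_3}\|_{\l^2}^\ta\les N_{\max}^{\frac12\ta}\times(\text{weights})$, by crude counting of the extra free variable together with \eqref{phi1a}.

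For the deterministic tensor, we need to bound $\|\hf^{2,\Nb_3}\|_{n_3n_B\to nn_C}$ for each of the four partitions $(B,C)$ of $\{1,2\}$. In each case Cauchy--Schwarz, as in \eqref{YG3x}--\eqref{YG3y}, gives the bound
\[
\frac{N^{s_0}}{N_1^\s N_2^\s N_3^s}\,\|\phi\|_{\HS(L^2;H^\s)}^2 .
\]
The point is that $\jb{n_j}^\s\phi_{n_j}$ is in $\l^2$, so each $\phi_{n_j}$ can be placed on whichever side its variable sits. In the case $n_3\to nn_1n_2$, for instance, one sums $|\phi_{n_1}\phi_{n_2}|^2$ with $n$ determined by the other variables. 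Checking all four partitions carefully, including the mixed ones such as $n_3n_1\to nn_2$ where the delta constraint must be used, is routine but needs care.

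The main step is the time factor. By Sobolev's inequality as in \eqref{YS1} (multi-parameter, via \eqref{BM0a}), $\|\ff^2\|_{\Hs^\be_{t_1t_2}}\les\|\ff^2\|_{L^{1/\be}_{t_1t_2}}$. We raise to the power $1/\be$ and use $\tau^{a}e^{-\tau}\les1$ on each of the three exponentials, with exponents $a_j=a_j(\Nb_3)$. This gives
\[
\jb{n}^{-2a_1}\jb{n_{23}}^{-2a_2}\jb{n_3}^{-2a_3}\int_{r<t_2<t_1<t}(t-t_1)^{-a_1}(t_1-t_2)^{-a_2}(t_2-r)^{-a_3}\,dt_2\,dt_1 .
\]
By iterated beta functions \eqref{YS2} (Dirichlet integral), this integral is $\sim(t-r)^{2-\sum a_j}$, finite since each $a_j<1$; \eqref{XY5} guarantees that the exponent is nonnegative. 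Taking the $\be$-th power and localizing yields $(t-r)^{(2-\sum a_j)\be}N^{-2a_1\be}N_{23}^{-2a_2\be}N_3^{-2a_3\be}$. Since $\ab$ takes finitely many values on the relevant range, the implicit constants depend only on $\ab$. Combining these bounds gives \eqref{XY7}, with the summand $M^{(2)}$ from \eqref{XY6}.

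For the second part, \eqref{XY8} and \eqref{XY9} turn \eqref{XY7} into $\les p\|\phi\|^2|t-r|^{\g'}$ for some $\g'>\g$, using $|t-r|\le T$. For the Hölder regularity, Garsia--Rodemich--Rumsey alone does not apply directly, because $\XX^2$ is a two-parameter increment and is not additive. Instead, we use Chen's relation \eqref{chen2}: $\wt\updl\XX^2=\XX^1\XX^1$, and the right-hand side is controlled via (iii) and \eqref{YG1a}. With this, we argue as in the rough path version of GRR (cf.\ \cite[Lemma 2.3]{GKOT}, \cite[Lemma 3.8]{GT10}), handling the $A_{t-r}$ terms by \eqref{S1}. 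This yields \eqref{XY10} and the existence of a continuous version. We expect this last step, and the bookkeeping of the partition norms, to be the main technical obstacles.
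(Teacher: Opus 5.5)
Your proposal is correct and follows essentially the same route as the paper: the $I_2$ representation with $b = n_3$, $c = n$, dyadic localization to $E_{\Nb_3}$, Lemma~\ref{LEM:RT} with Cauchy--Schwarz bounds on all partition norms, Sobolev's inequality to pass to $L^{1/\be}_{t_A}$, and the heat-kernel/beta-function bound. The only cosmetic difference is that you evaluate the Dirichlet integral in one step, whereas the paper iterates the bound on $J^1$. For the H\"older bound, both of you invoke \cite[Lemma 3.8]{GT10} with Chen's relation $\wt\updl \XX^2 = \XX^1\XX^1$ and \eqref{YG1a}.

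One small caveat: $\ab \in \A_3$ need not take finitely many values. The uniformity of the implicit constant actually requires $\ab$ to stay in a compact subset of $[0,1)^3$, as it does in the application \eqref{ZX1a}. The paper's $\les_\ab$ carries the same implicit assumption.
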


In the next subsection, 
we make a suitable choice of $\ab \in \A_3$, 
guaranteeing \eqref{XY8} and~\eqref{XY9}.

\begin{proof} [Proof of Proposition \ref{PROP:drive3}]

It suffices to prove the bound \eqref{XY7}, 
since the bound \eqref{XY10} follows
from~\eqref{XY7}, \eqref{XY8}, \eqref{XY9},  and 
 \cite[Lemma 3.8]{GT10} with \eqref{chen2} and \eqref{YG1a}
 for $\XX^1$.
Here, we need~\eqref{YG1a}
 for $\XX^1$,  since 
 \cite[Lemma 3.8]{GT10} requires a bound on $\wt \updl \XX^2
 = \XX^1 \XX^1$ (see \eqref{chen2}).

Fix $\frac 12 \le \be < 1$
and  $t > r \ge 0$.
From \eqref{high2a}, we have 
\begin{align}
\Ft_x\big( \jb{\nabla}^{s_0} \XX^2_{t,r} \jb{\nabla}^{-s} f\big) (n) 
= \sum_{n_3 \in \Z} \ft f(n_3) I_2 [ \hf^2_{nn_3}(n_A) \ff^2_{nn_3}(t_A,n_A)  ], 
\label{RGG1}
\end{align}

\noi
where
$I_2$ denotes the multiple Wiener  integral (see Subsection \ref{SUBSEC:FBM}).
Here,  
with  $A = \{1, 2\}$, 
$\hf^2_{nn_3}(n_A) $ and $\ff^2_{nn_3}(t_A, n_A) $ are defined by 
\begin{align}
\begin{split}
\hf^2_{n n_3}(n_A) & = 
 \ind_{n=n_{123}}
\cdot 
\frac {\jb{n}^{s_0 }} {\jb{n_3}^s  }\phi_{n_1}
\phi_{n_2}, \\
 \ff^2_{nn_3} (t_A, n_A)  &= 
\ff_{nn_3}^{2, t, r} (t_A, n_A) = 
\ind_{r \le t_2 \le t_1\le t}\\
& \quad \times 
 e^{-(t-t_1)\jb{n}^2}
e^{-(t_1-t_2)\jb{n_{23}}^2}
e^{- (t_2-r)\jb{n_3}^2}.
\end{split}
\label{RGG2}
\end{align}

\noi
Given $\Nb_3 \in (2^{\Z_{\ge 0}})^5$
of the form \eqref{XY1}, we set 
\begin{align}
\begin{split}
 \hf^{2, \Nb_3}_{n n_3}(n_A) 
% & = 
%\hf^{2, N, N_1, N_2, N_3, N_{23}}_{n n_3}(n_A)
& = \ind_{E_{\Nb_3}}
\cdot 
 \hf^2_{n n_3}(n_A), \\
\ff^{2, \bf N}_{nn_3} (t_A, n_A)  
%&= 
% \ff^{2, N, N_1, N_2, N_3, N_{23}}_{nn_3} (t_A, n_A)  = 
& = \ind_{E_{\Nb_3}}
\cdot 
 \ff^2_{nn_3} (t_A, n_A),  
\end{split}
\label{RGG3}
\end{align}

\noi
where $ E_{\Nb_3}$ is as in \eqref{XY2}.
Then, given any $\ta > 0$, 
it follows from \eqref{RGG1}, \eqref{RGG2}, \eqref{RGG3}, 
and the random tensor estimate (Lemma~\ref{LEM:RT})
that 
\begin{align}
\begin{split}
 \big\| &  \| \XX^2_{t,r} \|_{\LOP(H^s;H^{s_0})} \big\|_{L^p(\Om)}
 = \big\| \| \jb{\nb}^{s_0} \XX^2_{t,r}\jb{\nb}^{-s} \|_{\LOP(L^2;L^2)} \big\|_{L^p(\Om)}\\
& = \big\| \| I_2[  \hf^2_{nn_3}(n_A)  \ff^2_{nn_3}(t_A, n_A)  ] \|_{\l^2_{n_3} \to \l^2_n} \big\|_{L^p(\Om)}\\
& \le \sum_{\Nb_3\in \D_3}
\big\| \| 
I_2[ \hf^{2, \Nb_3}_{nn_3}(n_A)  \ff^{2, \Nb_3}_{nn_3}(t_A, n_A)] \|_{\l^2_{n_3} \to \l^2_n} \big\|_{L^p(\Om)}\\
& \les 
p
\sum_{\Nb_3\in \D_3}
\frac{N_{\max}^{\frac 12 \ta} N^{\ta s_0}}
{N_1^{ \ta\s} N_2^{\ta \s} N_3^{\ta s} }
\|\phi\|_{\HS(L^2; H^\s)}^{2\ta}\\
& \hphantom{XXXXX}\times 
 \|  \ff^{2, \Nb_3}_{nn_3} (t_A, n_A)\|_{\l^\infty_{nn_3n_A}\Hs^\be_{t_A} }
 \Big(\max_{(B, C)} \|  \hf^{2, \Nb_3}_{nn_3}(n_A)\|_{n_3n_B \to nn_C} \Big)^{1-\ta}
\end{split}
\label{RGG4}
\end{align}

\noi
for any finite $p \ge 1$, 
where $\D_3$ is as in \eqref{XY3a}, 
 $N_{\max} = \max(N, N_1, N_2, N_3)$,
  $\Hs^\be_{t_A}(\R^2_+)$ is as in~\eqref{BM0a},  
  and 
  the maximum 
on the last factor in \eqref{RGG4}
is taken over all  partitions
$(B, C)$  
 of $A = \{1, 2\}$:
\[n_1 n_2 n_3  \to n , \qquad  
n_1 n_3 \to n n_2, 
\qquad n_2 n_3 \to nn_1, 
\qquad \text{and}\qquad 
n_3 \to nn_1 n_2. \]

From \eqref{RGG2}, \eqref{RGG3}, and 
Cauchy-Schwarz's inequality with \eqref{phi1a}, we have
\begin{align}
\begin{split}
& \|   \hf^{2, \Nb_3}_{nn_3}(n_A)\|_{n_1n_2n_3 \to n} \\
&\quad \sim 
\frac{N^{s_0}}{N_3^s  }
\sup_{\|f\|_{\l^2_{n_1 n_2 n_3}}
= \|g\|_{\l^2_{n}}  = 1}
\bigg|
\sum_{n, n_1, n_2, n_3 \in \Z}
\ind_{ E_{\Nb_3}}
\phi_{n_1}\phi_{n_2} f_{n_1 n_2 n_3} g_{n}\bigg|\\
&\quad \les
\frac{N^{s_0}}{N_1^\s N_2^\s N_3^s}
\|\phi\|_{\HS(L^2; H^\s)}^2.
\end{split}
\label{RGG4a}
\end{align}

\noi
A similar computation yields
\begin{align}
\max_{(B, C)} \|  \hf^{2, \Nb_3}_{nn_3}(n_A)\|_{n_3n_B \to nn_C} 
&\les
\frac{N^{s_0}}{N_1^\s N_2^\s N_3^s}
\|\phi\|_{\HS(L^2; H^\s)}^2.
\label{RGG6}
\end{align}

From  \eqref{BM0a} and Sobolev's inequality
(separately applied
to each of the $t_1$- and $t_2$- variables), we have  
\begin{align}
\begin{split}
 \|  \ff^2_{nn_3} (t_A, n_A)\|_{\l^\infty_{nn_3n_A} \Hs_{t_A}^\be}
& =  
\|  \ff^2_{nn_3} (t_A, n_A)\|_{\l^\infty_{nn_3n_A} \dot H_{t_A}^{\frac 12 - \be}}\\
& \les 
\|  \ff^2_{nn_3} (t_A, n_A)\|_{\l^\infty_{nn_3n_A}  L^\frac 1\be_{t_A}}.
\end{split}
\label{ZYS1}
\end{align}

\noi
By applying the bound on $J^1$ in \eqref{YS3}
and another change of variables with \eqref{YS2}, we have 
\begin{align}
\begin{split}
\| &  \ff^2_{nn_3} (t_A, n_A)\|_{ L^\frac 1\be_{t_A} }^\frac 1 \be 
 = J^2_{t, r}(n, n_{23}, n_3)\\
: \!& = \int_r^t 
e^{-\be^{-1}(t-t_1)\jb{n}^2}
\int_r^{t_1}
e^{-\be^{-1}(t_1-t_2)\jb{n_{23}}^2}
e^{- \be^{-1}(t_2-r)\jb{n_3}^2}dt_2  
dt_1 \\
& = \int_r^t 
e^{-\be^{-1}(t-t_1)\jb{n}^2}
J^1_{t_1, r}(n_{23}, n_3)
%\int_r^{t_1}
%e^{-\be^{-1}(t_1-t_2)\jb{n_{23}}^2}
%e^{- \be^{-1}(t_2-r)\jb{n_3}^2}dt_2  
dt_1 \\
& \les_{\be, a_1, a_2, a_3} 
\jb{n}^{-2a_1}\jb{n_{23}}^{-2a_2}
\jb{n_3}^{-2a_3}\\
& \quad \times \int_r^t (t - t_1)^{-a_1} 
(t_1- r)^{1-a_2- a_3}  dt_1\\
& 
\sim_{a_1, a_2, a_3} 
 (t-r)^{2-\sum_{j = 1}^3 a_j }
\jb{n}^{-2a_1}\jb{n_{23}}^{-2a_2}
\jb{n_3}^{-2a_3}.
\end{split}
\label{ZYS3}
\end{align}

\noi
uniformly in $n, n_2, n_3 \in \Z$
and  $t >  r\ge 0$, 
provided that $0 \le a_1, a_2, a_3 < 1$, 
satisfying~\eqref{XY5}.
Hence,  from \eqref{ZYS1} and \eqref{ZYS3}, we have
\begin{align}
\begin{split}
\|  \ff^{2, \Nb_3}_{nn_3} (t_A, n_A)\|_{ L^\frac 1\be_{t_A} }
& \les_{a_1, a_2, a_3} (t-r)^{(2
-\sum_{j = 1}^3 a_j )\be}N^{-2a_1\be}
N_{23}^{-2a_2\be}N_{3}^{-2a_3\be}, 
\end{split}
\label{ZYS3a}
\end{align}

\noi
uniformly in $(n, n_1, n_2, n_3) \in E_{\Nb_3}$, $n_A \in \Z^A$, 
$\Nb_3 \in \D_3$, 
and $t >  r\ge 0$.

Therefore, putting 
\eqref{RGG4}, 
\eqref{RGG6}, 
and \eqref{ZYS3a}
together with \eqref{XY6}, 
we obtain the desired bound \eqref{XY7}.
\end{proof}

\medskip

\noi
$\bul$ {\bf Third order driver $\XX^3$ in \eqref{high3a}.}\rule[-3mm]{0pt}{0pt}\\
\indent
Given dyadic $N, N_1, N_2, N_3, N_4, N_{234}, N_{23} \ge1$, 
we use $\Nb_4$ to denote the following dyadic septuple:
\begin{align}
\Nb_4 = (N, N_1, N_2, N_3, N_4, N_{234}, N_{34}) \in (2^{\Z_{\ge 0}})^7.
\label{XXY1}
\end{align}

\noi
Given $\Nb_4 \in (2^{\Z_{\ge 0}})^7$, 
define  $ E_{\Nb_4}$ by 
\begin{align}
\begin{split}
E_{\Nb_4} = \big\{(n, n_1, n_2, n_3, n_4) \in \Z^5:
\ & n = n_{1234}, \,  |n|\sim N, \\
&  |n_j|\sim N_j, \, j = 1, \dots, 4, \\
&  |n_{234}|\sim N_{234},
\, |n_{34}|\sim N_{34}
\big\}.
\end{split}
\label{XXY2}
\end{align}

\noi
On $E_{\Nb_4}$, we have
\begin{align}
\max( N, N_{234}, N_{34}, N_4) \sim N_{\max} : = \max (N, N_1, N_2, N_3, N_4).
\label{XXY3}
\end{align}

\noi
We define $\D_4$ by setting
\begin{align}
\D_4 = \big\{
\Nb_4 \in (2^{\Z_{\ge 0}})^7: \, \text{$\Nb_4$ is  of the form \eqref{XXY1}, satisfying \eqref{XXY3}}
\big\}.
\label{XXY3a}
\end{align}

%
%note by $\D_4$ the collection  of 
%dyadic septuples
%$\Nb_4$ of the form \eqref{XXY1}, satisfying \eqref{XXY2}.
\noi
We also denote by  $\A_4$ the class of functions
$\ab = (a_1, a_2, a_3, a_4)
:(2^{\Z_{\ge 0}})^7 \to [0, 1)^4$
of the form: 
\begin{align}
 \ab(\Nb_4) = \big(a_1(\Nb_4), a_2(\Nb_4), a_3(\Nb_4), a_4(\Nb_4)\big) \in [0, 1)^4
 \label{XXY4} 
\end{align}

\noi
for $\Nb_4 \in (2^{\Z_\ge 0})^7$, 
satisfying
\begin{align}
\sum_{j = 1}^4 a_j \le 3.
\label{XXY5}
\end{align}

\begin{proposition}
\label{PROP:drive4}
Let  $\frac 12 \le \be < 1$
and $s, s_0, \s \in \R$.
Given $\phi \in \HS(L^2(\T); H^\s(\T))$
satisfying~\eqref{phi1}, 
let  $\XX^3$ be the third order driver in~\eqref{high3a}.
Given
small $\ta > 0$ and 
$\ab \in \A_4$, 
set 
\begin{align}
M^{(3)}_{\be, \s, s, s_0, \ta} (\ab(\Nb_4))
= 
\frac{N_{\max}^{\frac 12 \ta} N^{s_0- 2a_1(\Nb_4)\be}}
{N_1^{\s} N_2^\s N_3^\s N_{234}^{2a_2(\Nb_4)\be }
N_{34}^{2a_3(\Nb_4)\be }
N_4^{s+2a_4(\Nb_4)\be}}
\label{XXY6}
\end{align}

\noi
for $\Nb_4$ of the form \eqref{XXY1}, 
where $N_{\max} = \max(N, N_1, \dots, N_4)$.
 Then, given any finite $p \ge 1$ and $\ab \in \A_4$, we have
\begin{align}
\begin{split}
 & \big\| \| \XX^3_{t,r}  \|_{\LOP(H^{s}; H^{s_0})} 
  \big\|_{L^p(\Om)} 
   \les_\ab    p^\frac 32  \| \phi \|_{\HS(L^2;H^\s)}^3  \\
& \hphantom{XXXXXX}\times 
\sum_{\Nb_4 \in \D_4}
M^{(3)}_{\be, \s, s, s_0, \ta} (\ab(\Nb_4))
 (t-r)^{(3
-\sum_{j = 1}^4 a_j (\Nb_4))\be}
\end{split}
\label{XXY7}
\end{align}

\noi
for any   $t>r\ge 0$, uniformly in $\ab \in \A_4$.

Furthermore, suppose that the following holds
for a given choice of  $\ab \in \A_4$\textup{:}
\smallskip

\begin{itemize}
\item[(i)]
$M^{(3)}_{\be, \s, s, s_0, \ta} (\ab(\Nb_4))$ in \eqref{XXY6} is summable\textup{:}
\begin{align}
\sum_{\Nb_4 \in \D_4}
M^{(3)}_{\be, \s, s, s_0, \ta} (\ab(\Nb_4)) < \infty, 
\label{XXY8}
\end{align}

\smallskip

\item[(ii)] 
there exists $0 < \g < 1$ such that 
\begin{align}
\inf_{\Nb_4\in \D_4}\bigg(3
-\sum_{j = 1}^4 a_j (\Nb_4)\bigg)\be >  \g, 
\label{XXY9}
\end{align}

\smallskip

\item [(iii)]
for the choice of $\g$ in (ii), the bounds \eqref{YG1a}
and \eqref{XY10} 
for $\XX^1$ and $\XX^2$, respectively,  hold.

\end{itemize}

\smallskip

\noi
Then, 
 we have 
\begin{align}
\big\| \| \XX^3 \|_{C^{\g}_{2,T} \LOP(H^s;H^{s_0})} \big\|_{L^p(\Om)} \les_T p^\frac 32   \| \phi \|_{{\HS} (L^2; H^\s)}^3
\label{XXY10}
\end{align}

\noi
for 
any finite $p \ge 1$ and $T > 0$.
In particular, 
there exists a version of $\XX^3$
such that 
\[\XX^3 \in C^{\g}_{2,T} \LOP(H^s(\T);H^{s_0}(\T)),\]

\noi
almost surely.

\end{proposition}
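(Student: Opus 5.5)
The proof will follow that of Proposition \ref{PROP:drive3} almost line by line, with one more layer of iterated integration. The first reduction is that \eqref{XXY10} follows from \eqref{XXY7}, \eqref{XXY8}, \eqref{XXY9}, and \cite[Lemma 3.8]{GT10}. The lemma needs a bound on $\wt\updl \XX^3 = \XX^1\XX^2 + \XX^2\XX^1$ (see \eqref{chen2}), and this bound is supplied by hypothesis (iii), namely \eqref{YG1a} and \eqref{XY10}, together with H\"older's inequality in $\Om$. So the main work is the pointwise-in-time bound \eqref{XXY7}.

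Fix $t > r \ge 0$ and $A = \{1,2,3\}$. From \eqref{high3a}, write
\[
\Ft_x\big(\jb{\nb}^{s_0}\XX^3_{t,r}\jb{\nb}^{-s}f\big)(n) = \sum_{n_4} \ft f(n_4)\, I_3\big[\hf^3_{nn_4}(n_A)\, \ff^3_{nn_4}(t_A,n_A)\big],
\]
where
\[
\hf^3_{nn_4}(n_A) = \ind_{n = n_{1234}}\, \jb{n}^{s_0}\jb{n_4}^{-s}\phi_{n_1}\phi_{n_2}\phi_{n_3},
\]
and $\ff^3$ is the indicator of $r\le t_3\le t_2\le t_1\le t$ times
\[
e^{-(t-t_1)\jb{n}^2}e^{-(t_1-t_2)\jb{n_{234}}^2}e^{-(t_2-t_3)\jb{n_{34}}^2}e^{-(t_3-r)\jb{n_4}^2}.
\]
I localize both kernels to $E_{\Nb_4}$ from \eqref{XXY2}, use the triangle inequality over $\Nb_4\in\D_4$, and apply Lemma \ref{LEM:RT} with $k=3$, which gives the factor $p^{3/2}$. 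For each of the $8$ partitions $(B,C)$ of $A$, the operator norms $\|\hf^{3,\Nb_4}\|_{n_4n_B\to nn_C}$ are bounded by Cauchy--Schwarz exactly as in \eqref{RGG4a}, giving
\[
\|\hf^{3,\Nb_4}\|_{n_4n_B\to nn_C} \les N^{s_0}N_1^{-\s}N_2^{-\s}N_3^{-\s}N_4^{-s}\|\phi\|_{\HS(L^2;H^\s)}^3.
\]
This works because the constraint $n=n_{1234}$ lets one free variable be eliminated in every split. The $\ell^2$-norm factor raised to the power $\ta$ is handled as in \eqref{RGG4}. It costs at most $N_{\max}^{\frac12\ta}$ times the same dyadic weights, since the $\ell^2$ norm over $E_{\Nb_4}$ exceeds the operator norm by at most the square root of the number of constrained free indices, and that number is a power of $N_{\max}$.

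For the time factor, I apply Sobolev's inequality in each of $t_1,t_2,t_3$ as in \eqref{ZYS1}, reducing to the $L^{1/\be}_{t_A}$-norm. The resulting quantity $J^3_{t,r}$ is computed inductively:
\[
J^3_{t,r} = \int_r^t e^{-\be^{-1}(t-t_1)\jb{n}^2}\, J^2_{t_1,r}(n_{234},n_{34},n_4)\,dt_1.
\]
I insert \eqref{ZYS3} with exponents $(a_2,a_3,a_4)$ and bound $e^{-\be^{-1}(t-t_1)\jb n^2}\les \jb n^{-2a_1}(t-t_1)^{-a_1}$. The beta integral \eqref{YS2} then gives
\[
(t-r)^{3-\sum a_j}\jb n^{-2a_1}\jb{n_{234}}^{-2a_2}\jb{n_{34}}^{-2a_3}\jb{n_4}^{-2a_4}.
\]
This needs $a_1<1$ and $2-a_2-a_3-a_4>-1$, and the latter is guaranteed by \eqref{XXY5}. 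There is a subtle point here: applying \eqref{ZYS3} needs $a_2+a_3+a_4\le 2$, whereas \eqref{XXY5} only gives $\sum_{j=1}^4 a_j\le 3$. To avoid this, I will estimate the integral directly as a triple beta-type integral $\int (t-t_1)^{-a_1}(t_1-t_2)^{-a_2}(t_2-t_3)^{-a_3}(t_3-r)^{-a_4}$, which is finite whenever each $a_j<1$. Iterating Dirichlet's formula shows it equals $c\,(t-r)^{3-\sum a_j}$, so only $a_j<1$ matters and \eqref{XXY5} just ensures a nonnegative power. Since the $a_j$ depend only on $\Nb_4$, the implicit constants are uniform on each dyadic block; the required uniformity in $\ab$ is that of finitely many beta constants, and I expect this to be where care is needed. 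Raising to the power $\be$ and multiplying the factors together gives $M^{(3)}(\ab(\Nb_4))(t-r)^{(3-\sum a_j)\be}$, which is \eqref{XXY7}.
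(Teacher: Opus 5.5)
Your proposal is correct and follows the paper's proof essentially step for step. The common route is: reduce \eqref{XXY10} to \eqref{XXY7} via \cite[Lemma 3.8]{GT10}, Chen's relation and the bounds for $\XX^1,\XX^2$; localize dyadically and apply Lemma~\ref{LEM:RT} with Cauchy--Schwarz bounds on all contractions; then use Sobolev's inequality in each $t_j$ and an iterated beta-integral bound for the time factor. Your one-shot Dirichlet-integral computation is simply the unrolled form of the paper's use of \eqref{ZYS3}. That derivation only uses $0\le a_j<1$, so your worry about needing $a_2+a_3+a_4\le 2$ is harmless either way.
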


In the next subsection, 
we make a suitable choice of $\ab \in \A_4$, 
guaranteeing \eqref{XXY8}
and~\eqref{XXY9}.

\begin{proof}[Proof of Proposition \ref{PROP:drive4}]
As in the  proof of Proposition \ref{PROP:drive3}, 
it suffices to prove 
 the bound~\eqref{XXY7},
since the bound \eqref{XXY10} follows
from~\eqref{XXY7}, \eqref{XXY8}, \eqref{XXY9},  and 
 \cite[Lemma~3.8]{GT10} with \eqref{chen2} and 
 the bounds \eqref{YG1a}
 and \eqref{XY10}
 for $\XX^1$ and $\XX^2$.

Fix $\frac 12 \le \be < 1$
and $t  > r \ge 0$.
From \eqref{high3a}, we have 
\begin{align}
\Ft_x\big( \jb{\nabla}^{s_0} \XX^3_{t,r} \jb{\nabla}^{-s} f\big) (n) 
= \sum_{n_4 \in \Z} \ft f(n_4) I_3 [ \hf^3_{nn_4}(n_A) \ff^3_{nn_4}(t_A,n_A)  ], 
\label{ZRGG1}
\end{align}

\noi
where
$I_3$ denotes the multiple Wiener integral (see Subsection \ref{SUBSEC:FBM}).
Here,  
with  $A = \{1, 2, 3\}$, 
$\hf^3_{nn_4}(n_A) $ and $\ff^3_{nn_4}(t_A, n_A)$ are defined by 
\begin{align}
\begin{split}
\hf^3_{n n_4}(n_A) & = 
 \ind_{n=n_{1234}}
\cdot 
\frac {\jb{n}^{s_0 }} {\jb{n_4}^s  }
\prod_{j = 1}^3\phi_{n_j} , \\
 \ff^3_{nn_4} (t_A, n_A)  &= 
\ff_{nn_4}^{3, t, r} (t_A, n_A) = 
\ind_{r \le t_3 \le t_2 \le t_1\le t}\\
& \quad \times 
 e^{-(t-t_1)\jb{n}^2}
e^{-(t_1-t_2)\jb{n_{234}}^2}
e^{- (t_2-t_3)\jb{n_{34}}^2}
e^{- (t_3-r)\jb{n_4}^2}.
\end{split}
\label{ZRGG2}
\end{align}

\noi
Given $\Nb_4 \in (2^{\Z_{\ge 0}})^7$
of the form \eqref{XXY1}, we set 
\begin{align}
\begin{split}
 \hf^{3, \Nb_4}_{n n_4}(n_A) 
 & = \ind_{E_{\Nb_4}}
\cdot 
 \hf^3_{n n_4}(n_A), \\
\ff^{3, \Nb_4}_{nn_4} (t_A, n_A) 
 &= \ind_{E_{\Nb_4}}
\cdot 
 \ff^3_{nn_4} (t_A, n_A),  
\end{split}
\label{ZRGG3}
\end{align}

\noi
where $ E_{\Nb_4}$ is as in \eqref{XXY2}.
Then, given any $\ta > 0$, 
it follows from \eqref{ZRGG1}, \eqref{ZRGG2}, \eqref{ZRGG3}, 
and the random tensor estimate (Lemma~\ref{LEM:RT})
that 
\begin{align}
\begin{split}
 \big\| &  \| \XX^3_{t,r} \|_{\LOP(H^s;H^{s_0})} \big\|_{L^p(\Om)}
 = \big\| \| \jb{\nb}^{s_0} \XX^3_{t,r}\jb{\nb}^{-s} \|_{\LOP(L^2;L^2)} \big\|_{L^p(\Om)}\\
& = \big\| \| I_3[  \hf^3_{nn_4}(n_A)  \ff^3_{nn_4}(t_A, n_A)  ] \|_{\l^2_{n_4} \to \l^2_n} \big\|_{L^p(\Om)}\\
& \le \sum_{\Nb_4 \in \D_4}
\big\| \| 
I_3[ \hf^{3, \Nb_4}_{nn_4}(n_A)  \ff^{3, \Nb_4}_{nn_4}(t_A, n_A)] \|_{\l^2_{n_4} \to \l^2_n} \big\|_{L^p(\Om)}\\
& \les 
p^\frac 32
\sum_{\Nb_4 \in \D_4}
\frac{N_{\max}^{\frac 12 \ta} N^{\ta s_0}}
{N_1^{ \ta\s} N_2^{\ta \s} N_3^{\ta \s} N_4^{\ta s} }
\|\phi\|_{\HS(L^2; H^\s)}^{3\ta}\\
& \hphantom{XXXXX}\times 
 \|  \ff^{3, \Nb_4}_{nn_4} (t_A, n_A)\|_{\l^\infty_{nn_4n_A}\Hs^\be_{t_A} }
 \Big(\max_{(B, C)} \|  \hf^{3, \Nb_4}_{nn_4}(n_A)\|_{n_4n_B \to nn_C} \Big)^{1-\ta}
\end{split}
\label{ZRGG4}
\end{align}

\noi
for any finite $p \ge 1$, 
where $\D_4$ is as in \eqref{XXY3a}, 
 $N_{\max} = \max(N, N_1, \dots, N_4)$,
  $\Hs^\be_{t_A}(\R^3_+)$ is as in~\eqref{BM0a},  
  and the maximum on the last factor is taken 
  over all partitions 
$(B, C)$ of $A = \{1, 2, 3\}$.

Proceeding as in \eqref{YG3x} and \eqref{RGG4a} with Cauchy-Schwarz's inequality, 
we have 
\begin{align}
\max_{(B, C)} \|  \hf^{3, \Nb_4}_{nn_4}(n_A)\|_{n_4n_B \to nn_C} 
&\les
\frac{N^{s_0}}{N_1^\s N_2^\s  N_3^\s N_4^s}
\|\phi\|_{\HS(L^2; H^\s)}^3.
\label{ZRGG6}
\end{align}

\noi
By applying Sobolev's inequality 
(separately applied to each of the $t_1$-, $t_2$-, and $t_3$-variables)
with \eqref{BM0a}, the bound on $J^2$ in \eqref{ZYS3}, 
and another change of variables,  we have 
\begin{align}
%\begin{split}
  \| &  \ff^3_{nn_4} (t_A, n_A)\|_{ \Hs_{t_A}^\be}^\frac 1 \be 
 \les \|  \ff^3_{nn_4} (t_A, n_A)\|_{ L^\frac 1\be_{t_A} }^\frac 1 \be 
 \notag \\
& = \int_r^t 
\int_r^{t_1}
\int_r^{t_2}
 e^{-\be^{-1}(t-t_1)\jb{n}^2}
e^{-\be^{-1}(t_1-t_2)\jb{n_{234}}^2}
 \notag \\
& \hphantom{XXX}
\times e^{- \be^{-1}(t_2-t_3)\jb{n_{34}}^2}
e^{- \be^{-1}(t_3-r)\jb{n_4}^2}
dt_3 dt_2  
dt_1 
 \notag \\
& = \int_r^t 
e^{-\be^{-1}(t-t_1)\jb{n}^2}
J^2_{t_1, r}(n_{234},n_{34},  n_4)
dt_1 
\label{ZZYS3}\\
& \les_{\be, a_1, \dots, a_4} 
\jb{n}^{-2a_1}\jb{n_{234}}^{-2a_2}
\jb{n_{34}}^{-2a_3}
\jb{n_4}^{-2a_4}
 \notag \\
& \quad \times \int_r^t (t - t_1)^{-a_1} 
(t_1- r)^{2-\sum_{j =2}^4a_j}  dt_1
 \notag \\
& 
\sim_{a_1, \dots, a_4} 
 (t-r)^{3-\sum_{j = 1}^4 a_j }
\jb{n}^{-2a_1}\jb{n_{234}}^{-2a_2}
\jb{n_{34}}^{-2a_3}
\jb{n_4}^{-2a_4}, 
 \notag 
%\end{split}
\end{align}

\noi
\noi
uniformly in $n, n_2, n_3, n_4 \in \Z$
and  $t >  r\ge 0$, 
provided that $0 \le a_1, \dots , a_4 < 1$, 
satisfying~\eqref{XXY5}.
Hence, we have 
\begin{align}
\begin{split}
& \|  \ff^{3, \Nb_4}_{nn_4} (t_A, n_A)\|_{ L^\frac 1\be_{t_A} }\\
& \quad \les_{a_1, \dots, a_4}  
 (t-r)^{(3-\sum_{j = 1}^4 a_j )\be }
N^{-2a_1\be }
N_{234}^{-2a_2\be }
N_{34}^{-2a_3\be }
N_{4}^{-2a_4\be}, 
\end{split}
\label{ZZYS3a}
\end{align}

\noi
uniformly in $(n, n_1, n_2, n_3, n_4) \in E_{\Nb_4}$, $n_A \in \Z^A$, 
$\Nb_4 \in \D_4$, 
and $t >  r\ge 0$.

Therefore, putting 
\eqref{ZRGG4}, 
\eqref{ZRGG6}, 
and \eqref{ZZYS3a}
together with \eqref{XXY6}, 
we obtain the desired bound \eqref{XXY7}.
\end{proof}

\subsection{Proof of Theorem \ref{THM:2}}
\label{SUBSEC:R3}

We conclude this section by presenting a proof of 
Theorem~\ref{THM:2}.
More precisely, we prove that 
under a suitable choice of parameters, 
the convolution RDE~\eqref{ro1a}
is pathwise globally well-posed.

\begin{proof}[Proof of Theorem \ref{THM:2}]

Let  $\be = \frac 12$ and fix $- \frac 12 < \s< 0$.
Given small $\eps > 0$ (to be chosen later), 
we set 
\begin{align}
 \al = \frac 14 + \eps\qquad \text{and}\qquad
\g = \frac 14 + 2\eps.
\label{ZZ0}
\end{align}

\medskip

\noi
\underline{\bf Part~1:}
We  show that there exists $ s> 0$ such that 
\eqref{ro2a} holds for any $T > 0$, almost surely

We first consider the first order driver $\XX^1$
in \eqref{sto1x}.
By setting $s_0 = s > 0$
and $a = \frac 12 - 6\eps$, 
the conditions in \eqref{YS0a}
of 
Proposition \ref{PROP:drive2} reduce to 
\begin{align}
\s > s - \frac 12 + 6\eps, 
\label{ZX1}
\end{align}

\noi
which guarantees that 
\begin{align}
\XX^1 \in \bigcap_{k = 1}^\infty C^{\g}_{2,k} \LOP(H^s(\T)),
\label{ZZX1}
\end{align}

\noi
almost surely; see \eqref{YG1} and \eqref{YG1a}.

Next, we consider the second order driver $\XX^2$
in \eqref{high2}.
Given $\Nb_3 \in \D_3$ (see~\eqref{XY3a}), 
we set 
\begin{align}
a_{j_*}(\Nb_3) = 1 - 10 \eps
\label{ZX1a}
\end{align}

\noi
 for $j_* \in \{1, 2, 3\}$ corresponding to 
the maximum of $N$, $N_{23}$, and $N_3$ in \eqref{XY6}, 
and set $a_j(\Nb_3)  = 0$ for $j \ne j_*$.
Then, in  view of~\eqref{XY3} (recall $\be = \frac 12$), 
we have 
\begin{align}
 N^{-a_1(\Nb_3)}
 N_{23}^{- a_2(\Nb_3) }N_3^{- a_3(\Nb_3)}
\les N_{\max}^{-1 + 10 \eps}.
\label{ZX2}
\end{align}

\noi
Then, from \eqref{XY6} (with $s_0 = s$) and \eqref{ZX2}, we have 
\begin{align*}
\sum_{\Nb_3 \in \D_3} M^{(2)}_{\be, \s, s, s, \ta} (\ab(\Nb_3))
= 
\sum_{\Nb_3 \in \D_3} 
\frac{N_{\max}^{\frac 12 \ta - 1 + 10 \eps} N^{s}}
{N_1^{\s} N_2^\s N_3^{s}} < \infty, 
%\label{ZX3}
\end{align*}

\noi
provided that 
\begin{align}
\s > \frac 12 s - \frac 12 + \frac 14 \ta + 5 \eps.
\label{ZX4}
\end{align}

\noi
As a result,
it follows from Proposition \ref{PROP:drive3} with \eqref{ZX1a} and \eqref{ZZ0} that, 
 given any finite $p \ge 1$, we have
\begin{align*}
\big\| \| \XX^2_{t,r}  \|_{\LOP(H^{s})}
  \big\|_{L^p(\Om)} 
   \les   p  \| \phi \|_{\HS(L^2;H^\s)}^2  
 (t-r)^{2\g + \eps}
%\label{ZX5}
\end{align*}

\noi
for any   $t>r\ge 0$
and thus 
\begin{align}
\XX^2 \in \bigcap_{k = 1}^\infty C^{2\g}_{2,k} \LOP(H^s(\T)),
\label{ZZX2}
\end{align}

\noi
almost surely; see \eqref{XY7} and \eqref{XY10}.

We now consider the third order driver $\XX^3$
in \eqref{high3}.
Given $\Nb_4 \in \D_4$ (see \eqref{XXY3a})
we choose $\ab(\Nb_4)$ in \eqref{XXY4} such that 
\begin{align}
 \sum_{j = 1}^4 a_j(\Nb_4) = \frac 32 - 14\eps.
\label{ZX6}
\end{align}

\medskip

\noi
$\bul$ {\bf Case 1:}
$N \vee N_{234} \sim N_{\max}$.\\
\indent
In this case, we set $a_j(\Nb_4)  = 1- 13\eps$
for $j = 1$ if $N \ge N_{234}$
and $j = 2$ if $N < N_{234}$.
On $E_{\Nb_4}$ defined in \eqref{XXY2}, we have
$N_{34} \vee N_4 \ges N_{3}$.
Then, 
we set $a_j(\Nb_4)  = \frac 12 - \eps$
for $j = 3$ if $N_{34} \ge N_{4}$
and $j = 4$ if $N_{34} < N_{4}$.
We set $a_j(\Nb_4) = 0$ for the remaining $j$'s.
Then, we have 
\begin{align}
\frac{N_{\max}^{\frac 12 \ta} N^{- a_1(\Nb_4)}}
{N_3^\s N_{234}^{a_2(\Nb_4) }
N_{34}^{a_3(\Nb_4) }
N_4^{a_4(\Nb_4)}}
\les N_{\max}^{\frac 12 \ta - 1 + 13\eps}, 
\label{ZX7}
\end{align}

\noi
provided that $\eps > 0$ is sufficiently small such that 
\begin{align}
\s \ge - \frac 12 + \eps.
\label{ZX8}
\end{align}

\medskip

\noi
$\bul$ {\bf Case 2:}
$N,  N_{234} \ll  N_{\max}$.\\
\indent 
In this case, we have $N_{34} \vee N_4 \sim N_{\max}$.
We set $a_j(\Nb_4)  = 1- 13\eps$
for $j = 3$ if $N_{34} \ge N_{4}$
and $j = 4$ if $N_{34} < N_{4}$.
On $E_{\Nb_4}$ defined in \eqref{XXY2}, we have
$N \vee N_{234} \ges N_{1}$.
Then, 
we set $a_j(\Nb_4)  = \frac 12 - \eps$
for $j = 1$ if $N \ge N_{234}$
and $j = 2$ if $N < N_{234}$.
We set $a_j(\Nb_4) = 0$ for the remaining $j$'s.
Then, we have 
\begin{align}
\frac{N_{\max}^{\frac 12 \ta} N^{- a_1(\Nb_4)}}
{N_1^\s N_{234}^{a_2(\Nb_4) }
N_{34}^{a_3(\Nb_4) }
N_4^{a_4(\Nb_4)}}
\les N_{\max}^{\frac 12 \ta - 1 + 13\eps}, 
\label{ZX9}
\end{align}

\noi
provided that \eqref{ZX8} holds.

\medskip

From \eqref{XXY6}, \eqref{ZX7},  and \eqref{ZX9}, we have 
\begin{align*}
\sum_{\Nb_4 \in \D_4} M^{(3)}_{\be, \s, s, s, \ta} (\ab(\Nb_4))
 < \infty, 
%\label{ZX10}
\end{align*}

\noi
provided that 
\begin{align}
\s > \frac 12 s - \frac 12 + \frac 14 \ta + \frac{13} 2 \eps.
\label{ZX11}
\end{align}

\noi
As a result,
it follows from Proposition \ref{PROP:drive4} with \eqref{ZX6}  
and \eqref{ZZ0} that, 
 given any finite $p \ge 1$, we have
\begin{align*}
\begin{split}
\big\| \| \XX^3_{t,r}  \|_{\LOP(H^{s})}
  \big\|_{L^p(\Om)} 
   \les   p^\frac 32  \| \phi \|_{\HS(L^2;H^\s)}^3
 (t-r)^{3\g + \eps}
\end{split}
%\label{ZX12}
\end{align*}

\noi
for any   $t>r\ge 0$
and thus 
\begin{align}
\XX^3 \in \bigcap_{k = 1}^\infty C^{3\g}_{2,k} \LOP(H^s(\T)),
\label{ZZX3}
\end{align}

\noi
almost surely; see \eqref{XXY7} and \eqref{XXY10}.

\medskip

\noi
$\bul$ {\bf Summary:}
Let  $- \frac 12 < \s< 0$.
Then, there exists small $s, \eps, \ta > 0$ such  that 
\eqref{ZX1}, \eqref{ZX4}, \eqref{ZX8}, and \eqref{ZX11}
hold.
Hence, 
it follows from \eqref{ZZX1},\eqref{ZZX2}, and \eqref{ZZX3}
that 
there exists $\Si \subset \Om$ with $\PP(\Si) = 1$ such that, 
for each $\o \in \Si$ and $j = 1,2, 3$, we have 
\begin{align*}
\XX^j=  \XX^j (\o)
\in \bigcap_{k = 1}^\infty C^{j\g }_{2,k} \L(H^s(\T)).
%\label{ZZ1}
\end{align*}

\noi
In the following, we fix $\o \in \Si$
and often suppress dependence on $\o$.

\medskip

\noi
\underline{\bf Part 2:}
In the following, we briefly discuss pathwise  well-posedness of the convolution RDE~\eqref{ro1a}.

Let $\g > \al > \frac 14$ be as in \eqref{ZZ0} and fix $0 < T \le 1$ (to be chosen later).
Given $u_0 \in H^s(\T)$, 
define $\D_{\XX^1, \XX^2, T}^{\al, \g, u_0}(H^s(\T))$
to be the subclass
of $\D_{\XX^1, \XX^2, T}^{\al, \g}(H^s(\T))$
defined in  Definition~\ref{DEF:high}\,(i)
such that $u|_{t = 0} = u^1|_{t = 0} = u^2|_{t = 0}  = u_0$.
Given $\uu = (u, u^1, u^2) 
\in \D_{\XX^1, \XX^2, T}^{\al, \g, u_0}(H^s(\T))$, satisfying
\begin{align}
\begin{split}
\ft \updl u & = \XX^1 (u^1) + \XX^2(u^2) + R^{0, \uu}, \\
\ft \updl u^1 & = \XX^1 (u^2)  + R^{1, \uu}
\end{split}
\label{ZZ1a}
\end{align}

\noi
for some 
 $R^{0, \uu} \in C_{2, T}^{\al + 2\g}H^s(\T)$
 and 
$R^{1, \uu} \in C_{2, T}^{\al + \g}H^s(\T)$
(with the short-hand notation \eqref{prod1}), 
define the process $z = z(u, u^1, u^2)$ by 
\begin{align}
\begin{split}
z(t) 
& = S(t) u_0 + \I^{\cj \XX}(u)(t)\\
& = S(t) u_0 + 
\Big((\Id - \ft \Ld \ft \updl)
 \big[\XX^1 (u) 
+    \XX^2 (u^1) 
+   \XX^3 (u^2) \big] \Big)_{t, 0}.
\end{split}
\label{ZZ2}
\end{align}

\noi
See \eqref{ro9}.
Then, from 
the convolution sewing lemma (Lemma \ref{LEM:sew2}) with 
\eqref{ro5a} and~\eqref{ro6} (recall that $\al + 3\g > 1$), we have 
\begin{align}
\begin{split}
\ft \updl z 
& = (\Id - \ft \Ld \ft \updl)
 \big[\XX^1 (u) 
+   \XX^2 (u^1) 
+   \XX^3 (u^2) \big] \\
& = 
\XX^1 (u) 
+   \XX^2 (u^1) + R^{0, \zz}, 
\end{split}
\label{ZZ3}
\end{align}

\noi
where $R^{0, \zz}$ is given by 
\begin{align}
\begin{split}
R^{0, \zz} 
& = 
\XX^3 (u^2) 
-  \ft \Ld \ft \updl
 \big[\XX^1 (u) 
+   \XX^2 (u^1) 
+   \XX^3 (u^2) \big] 
\\
& = 
\XX^3 (u^2) 
+ \ft \Ld \big[\XX^1(R^{0, \uu}) + 
\XX^2(R^{1, \uu})
+ \XX^3(\ft \updl u^2)\big]. % \in C^{\al + 2\g}_{2, T}H^s(\T).
\end{split}
\label{ZZ4}
\end{align}

\noi
Here, the second equality follows from \eqref{ro5a} and \eqref{ro6}.
By comparing  \eqref{ZZ3} with \eqref{control1}, we can take the first Gubinelli derivative
$z^1$ of $z$ to be\footnote{There is no a priori uniqueness
of the Gubinelli derivative under smooth perturbation.} 
\begin{align}
z^1 = u. %\qquad \text{and}\qquad z^2 = u^1.
\label{ZZ5}
\end{align}

\noi
Then, from \eqref{ZZ5} and \eqref{ZZ1a}, we have 
\begin{align}
\ft \updl z^1  = \XX^1 (u^1)  + R^{1, \zz}, 
\label{ZZ6}
\end{align}

\noi
where $R^{1, \zz}$ is given by 
\begin{align}
R^{1, \zz} 
=  \XX^2(u^2) + R^{0, \uu}.
% \in C^{\al + \g}_{2, T}H^s(\T).
\label{ZZ6a}
\end{align}

\noi
By comparing \eqref{ZZ6} with \eqref{control1}, 
we can take the second Gubinelli derivative $z^2$ of $z$ to be 
\begin{align}
z^2 = u^1.
\label{ZZ7}
\end{align}

\noi
Hence, we
define a map $\G$ on 
$\D_{\XX^1, \XX^2, T}^{\al, \g, u_0}(H^s(\T))$
%$\big(\ft \CC_{T}^{\al} H^s(\T)\big)^{\otimes 3}$ 
by setting
\begin{align}
\begin{split}
\G(\uu) = \G(u, u^1, u^2)
& = \zz = (z, z^1, z^2)\\
& = (z, u, u^1), 
\end{split}
\label{ZZ8}
\end{align}

\noi
where $z$ is as in \eqref{ZZ2} and \eqref{ZZ3}
and the last equality holds thanks to \eqref{ZZ5} and \eqref{ZZ7}.

From \eqref{ZZ5}, \eqref{Ho4}, \eqref{Ho5}, \eqref{Ho3}, 
and \eqref{ZZ1a}
with $z^1|_{t = 0} = u|_{t = 0} = u_0$, 
 we have
\begin{align}
\begin{split}
\|z^1\|_{\ft \CC^\al_T H^s_x}
& \les \| \ft \updl u \|_{C^\al_{2, T} H^s_x} + \|u_0\|_{H^s}\\
& \le T^{\g - \al}\|\XX^1\|_{C^\g_{2, T} \L(H^s)} \|u^1\|_{L^\infty_TH^s_x} \\
& \quad +
T^{2\g - \al}\|\XX^2\|_{C^{2\g}_{2, T} \L(H^s)} \|u^2\|_{L^\infty_TH^s_x}\\
& \quad + T^{2\g} \|R^{0, \uu}\|_{C^{\al+2\g}_{2, T} H^s_x}
+ \|u_0\|_{H^s}\\
& \le T^{\g - \al}\|\XX^1\|_{C^\g_{2, T} \L(H^s)} \|u^1\|_{\ft \CC_T^\al H^s_x}\\
& \quad +
T^{2\g - \al}\|\XX^2\|_{C^{2\g}_{2, T} \L(H^s)} \|u^2\|_{\ft \CC_T^\al H^s_x}\\
& \quad + T^{2\g} \|R^{0, \uu}\|_{C^{\al+2\g}_{2, T} H^s_x}
+ \|u_0\|_{H^s}.
\end{split}
\label{ZZ9}
\end{align}

\noi
Similarly, from   \eqref{ZZ7} and \eqref{ZZ1a}
with 
$z^2|_{t = 0} = u^1|_{t = 0} = u_0$, 
we have
\begin{align}
\begin{split}
\|z^2\|_{\ft \CC^\al_T H^s_x}
& \les \| \ft \updl u^1 \|_{C^\al_{2, T} H^s_x} + \|u_0\|_{H^s}\\
& \le T^{\g - \al}\|\XX^1\|_{C^\g_{2, T} \L(H^s)} \|u^2\|_{L^\infty_TH^s_x} \\
& \quad +
T^{\g} \|R^{1, \uu}\|_{C^{\al+\g}_{2, T} H^s_x}
+ \|u_0\|_{H^s}\\
& \le T^{\g - \al}
\|\XX^1\|_{C^\g_{2, T} \L(H^s)} \|u^2\|_{\ft \CC_T^\al H^s_x} \\
& \quad + T^{\g} \|R^{1, \uu}\|_{C^{\al+\g}_{2, T} H^s_x}
+ \|u_0\|_{H^s}.
\end{split}
\label{ZZ10}
\end{align}

\noi
From \eqref{ZZ4}
and the convolution sewing lemma (Lemma \ref{LEM:sew2}), we have 
\begin{align}
\begin{split}
\|R^{0, \zz}\|_{C^{\al + 2\g}_{2, T}H^s_x}
& \les
T^{\g - \al} \|\XX^3\|_{C^{3\g}_{2, T} \L(H^s)} \|u^2\|_{L^\infty_T H^s_x} \\
& \quad + 
T^\g \Big\|\ft \Ld \big[\XX^1(R^{0, \uu}) + 
\XX^2(R^{1, \uu})
+ \XX^3(\ft \updl u^2)\big] \Big\|_{C^{\al + 3\g}_{2, T}H^s_x}\\
& \le T^{\g - \al} \|\XX^3\|_{C^{3\g}_{2, T} \L(H^s)} \|u^2\|_{\ft \CC_T^\al H^s_x} \\
& \quad + T^\g \Big(\|\XX^1\|_{C^{\g}_{2, T} \L(H^s)}
\|R^{0, \uu}\|_{C^{\al+2\g}_{2, T} H^s_x}  \\
& \hphantom{lXXX}
+ \|\XX^2\|_{C^{2\g}_{2, T} \L(H^s)}
\|R^{1, \uu}\|_{C^{\al+\g}_{2, T} H^s_x}\\
& \hphantom{lXXX}
+ \|\XX^3\|_{C^{3\g}_{2, T} \L(H^s)}
\| u^2 \|_{\ft \CC_T^\al H^s_x}\Big).
\end{split}
\label{ZZ11}
\end{align}

\noi
From \eqref{ZZ6a}, 
we have  
\begin{align}
\begin{split}
\|R^{1, \zz}\|_{C^{\al + \g}_{2, T}H^s_x}
& \le  T^{\g-\al} 
\|\XX^2\|_{C^{2\g}_{2, T} \L(H^s)}
 \|u^2\|_{L^\infty_T H^s_x} 
+ T^\g \|R^{0, \uu}\|_{C^{\al + 2\g}_{2, T}H^s_x}\\
& \le  T^{\g-\al} 
\|\XX^2\|_{C^{2\g}_{2, T} \L(H^s)}
\|u^2\|_{\ft \CC_T^\al H^s_x} 
+ T^\g \|R^{0, \uu}\|_{C^{\al + 2\g}_{2, T}H^s_x}.
\end{split}
\label{ZZ12}
\end{align}

Therefore, 
putting \eqref{ZZ9}, \eqref{ZZ10}, 
\eqref{ZZ11}, and \eqref{ZZ12} together
with analogous difference estimates,   
we see that the map $\G$ defined in \eqref{ZZ8}
is a contraction on 
$ \D_{\XX^1, \XX^2, T}^{\al, \g, u_0}(H^s(\T))$
 (see~\eqref{control2}), 
by choosing
\begin{align}
T = T\Big(  \|\XX^1(\o) \|_{C^{\g}_{2, 1} \L(H^s)},
 \|\XX^2(\o) \|_{C^{2\g}_{2, 1} \L(H^s)},
  \|\XX^3(\o) \|_{C^{3\g}_{2, 1} \L(H^s)}
 \Big) >0
\label{ZZ13a}
\end{align}

\noi
sufficiently small.
Since a required argument is standard, we omit details.
See, for example, \cite[Subsection 4.4]{CGLLO2}
for details of such an argument 
(in the context of a nonlinear YDE with a rough perturbation).

By the Banach fixed point theorem, 
there exists a unique fixed point $\uu = (u, u^1, u^2) \in 
 \D_{\XX^1, \XX^2, T}^{\al, \g, u_0}(H^s(\T))$
for $\G$. In particular, from  \eqref{ZZ8}, 
we have 
\begin{align}
z = u = u^1 = u^2, 
\label{ZZ13}
\end{align}

\noi
and hence it follows
from \eqref{ZZ2} that 
$u$ is a unique solution to the convolution RDE~\eqref{ro1a}.
This proves pathwise  local well-posedness.

Pathwise 
global well-posedness follows as in the Young case by noting that 
the local existence time in \eqref{ZZ13a} does not depend
on the initial data, allowing us to iterate the local-in-time argument.
We omit details.
\end{proof}

\begin{ackno}\rm
 The authors would 
also like to express their gratitude to the anonymous referees for the helpful comments which improved the quality of the paper.
T.O.~was supported by the European Research Council (grant no.~864138 ``SingStochDispDyn")
and also 
acknowledges support from  
the NSFC (grant no.~W2531005).
Y.S.~is grateful to the financial supports of the National Key R$\&$D Program of China 
(grant no.~2022YFA1006300) and the NSFC (grant no.~12426205, no.~12271030).

\end{ackno}

\end{document}